\documentclass{amsart}
\usepackage{amssymb} 
\usepackage{dsfont}
\usepackage{footmisc}
\usepackage[usenames,dvipsnames,svgnames,table]{xcolor}
\usepackage[utf8]{inputenc}
\usepackage[T1]{fontenc}
\usepackage{ tipa }
\usepackage{helvet}
\usepackage{color}
\usepackage{mathrsfs}  
\usepackage{stmaryrd}  
\usepackage{amsmath,amsxtra,amsthm,amssymb,xr,enumerate,fullpage,comment, graphicx, mathtools}
\usepackage[all]{xy}
\usepackage[bbgreekl]{mathbbol}
\usepackage{bbm}
\usepackage{bm}
\usepackage{tikz-cd}
\makeatletter
\newlength{\doublefracgap}
\DeclareRobustCommand{\doublefrac}[2]{%
  \mathinner{\mathpalette\doublefrac@{{#1}{#2}}}%
}
\newcommand{\doublefrac@}[2]{\doublefrac@@#1#2}
\newcommand{\doublefrac@@}[3]{%
  \ooalign{%
    \raisebox{\doublefracgap}{$\m@th#1\frac{#2}{\phantom{#3}}$}\cr
    \raisebox{-\doublefracgap}{$\m@th#1\frac{\phantom{#2}}{#3}$}\cr
  }%
}

\makeatother

\DeclareMathSymbol{\invques}{\mathord}{operators}{`>}
\DeclareUnicodeCharacter{00BF}{\tmquestiondown}
\DeclareRobustCommand{\tmquestiondown}{%
  \ifmmode\invques\else\textquestiondown\fi
}

\usepackage{verbatim}
\numberwithin{equation}{section}

\makeatletter
\newcommand{\mylabel}[2]{#2\def\@currentlabel{#2}\label{#1}}
\makeatother

\newtheorem{theorem}{Theorem}[section]
\newtheorem{lemma}[theorem]{Lemma}

\newtheorem{proposition}[theorem]{Proposition}

\newtheorem{corollary}[theorem]{Corollary}
\newtheorem{defn}[theorem]{Definition}

\newtheorem{remark}[theorem]{Remark}

\newtheorem{lthm}{Theorem}

\newcommand{\Gal}{\operatorname{Gal}}

\newcommand{\DD}{\mathbb{D}}

\newcommand{\CC}{\mathbb{C}}

\newcommand{\NN}{\mathbb{N}}
\newcommand{\QQ}{\mathbb{Q}}
\newcommand{\Qp}{{\mathbb{Q}_p}}
\newcommand{\Zp}{\mathbb{Z}_p}
\newcommand{\ZZ}{\mathbb{Z}}
\renewcommand{\AA}{\mathbb{A}}
\newcommand{\FF}{\mathbb{F}}

\newcommand{\g}{\mathbf{g}}

\newcommand{\ord}{\mathrm{ord}}

\newcommand{\fp}{\mathfrak{p}}
\newcommand{\fq}{\mathfrak{q}}

\newcommand{\vp}{\varphi}

\newcommand{\cH}{\mathcal{H}}
\newcommand{\cO}{\mathcal{O}}

\newcommand{\GL}{\mathrm{GL}}

\newcommand{\bR}{\mathbf{R}}

\newcommand{\cyc}{\textup{cyc}}

\newcommand{\Hom}{\mathrm{Hom}}

\newcommand{\ac}{\textup{ac}}
\newcommand{\LL}{\Lambda}
\newcommand{\TT}{\mathbb{T}}

\newcommand{\RR}{\mathcal{R}}
\newcommand{\f}{\textup{\bf f}}

\newcommand{\h}{\textup{\bf h}}

\newcommand{\lra}{\longrightarrow}
\newcommand{\ra}{\lra}
\newcommand{\res}{\textup{res}}

\newcommand{\Bj}{\mathbf{j}}

\newcommand{\Bf}{\mathbf{f}}

\newcommand{\Bg}{\mathbf{g}}
\newcommand{\Bk}{\mathbf{k}}

\newcommand{\ur}{\textup{ur}}

\newcommand{\cF}{\mathcal{F}}

\newcommand{\id}{\mathrm{id}}
\newcommand{\etale}{\textup{\'et}}

\newcommand{\Bh}{\mathbf{h}}

\newcommand{\bD}{\mathbf{D}}
\newcommand{\RG}{\mathbf{R\Gamma}}
\newcommand{\Spm}{\mathrm{Spm}}
\newcommand{\cX}{\mathfrak{X}}
\newcommand{\Ddagrigf}{\mathbf{D}^\dagger_{\mathrm{rig},U_\f}}
\newcommand{\Ddagrigg}{\mathbf{D}^\dagger_{\mathrm{rig},U_\g}}
\newcommand{\Ddagrigfg}{\mathbf{D}^\dagger_{\mathrm{rig},U_\f\times U_\g}}
\newcommand{\DdagrigX}{\mathbf{D}^\dagger_{\mathrm{rig},\cX}}

\newcommand{\DdagrigE}{\mathbf D^\dagger_{\mathrm{rig},E}}

\newcommand{\p}{\mathfrak{p}}

\newcommand{\m}{\mathfrak{m}}

\newcommand{\Frac}{\mathrm{Frac}}

\newcommand{\cW}{\mathcal{W}}

\newcommand{\sA}{\mathscr{A}}

\newcommand{\cR}{\mathcal{R}}

\newcommand{\fN}{\mathfrak{N}}

\newcommand{\sW}{\mathscr{W}}

\newcommand{\Aut}{\mathrm{Aut}}
\newcommand{\cA}{\mathcal{A}}

\newcommand{\bal}{{\rm bal}}
\newcommand{\wt}{{\rm wt}}
\newcommand{\Fr}{{\rm Fr}}

\newcommand{\hatotimes}{{\,\widehat\otimes\,}}

\newcommand{\hh}{\h}
\newcommand{\Q}{\QQ}

\usepackage{hyperref}

\definecolor{pinegreen}{rgb}{0.0, 0.47, 0.44}

 \definecolor{pAlgae}{RGB}{87,115,135}
\definecolor{airforceblue}{rgb}{0.36, 0.54, 0.66}
	\definecolor{bondiblue}{rgb}{0.0, 0.58, 0.71}
\definecolor{britishracinggreen}{rgb}{0.0, 0.26, 0.15}
\definecolor{camouflagegreen}{rgb}{0.47, 0.53, 0.42}
\definecolor{darkcyan}{rgb}{0.0, 0.55, 0.55}

\hypersetup{
    colorlinks = true,
    linkcolor=blue,
     filecolor=blue,
     citecolor = darkcyan,      
     urlcolor=cyan,
    linkbordercolor = {white},
}

\subjclass[2020]{Primary 11G40; Secondary 11G18, 11F67, 11R23, 11F33}
\keywords{Heegner cycles, Beilinson--Flach elements, $p$-adic $L$-functions, (arithmetic) GGP}

\begin{document}

\author{K\^az{\i}m B\"uy\"ukboduk}
\address{K\^az\i m B\"uy\"ukboduk\newline UCD School of Mathematics and Statistics\\ University College Dublin\\ Belfield\\Dublin 4\\Ireland}
\email{kazim.buyukboduk@ucd.ie}

\author{Peter Neamti}
\address{Peter Neamti\newline UCD School of Mathematics and Statistics\\ University College Dublin\\ Belfield\\Dublin 4\\Ireland}
\email{peter.neamti@ucdconnect.ie}

\dedicatory{To Henri Darmon on the occasion of his 60th birthday, with admiration.}

\title{{A} \lowercase{proof of} $p$-\lowercase{adic} G\lowercase{ross}--Z\lowercase{agier theorem via} BDP \lowercase{formula}}

\begin{abstract}
This paper provides a new proof of the $p$-adic Gross--Zagier formula for the $p$-adic $L$-function associated with the base change of a normalised cuspidal eigen-newform $f$ of weight $k \geq 2$ (and families of such) to an imaginary quadratic field $K$. Our results encompass both the classical $p$-ordinary cases and non-ordinary scenarios, including new cases where $k > 2$ and $\mathrm{ord}_p(a_p(f)) > 0$. Unlike the traditional approach of comparing geometric and analytic kernels, we employ a ``wall-crossing'' strategy centred on the BDP formula and the theory of Beilinson--Flach elements. 
\end{abstract}

\maketitle

\tableofcontents

\section{Introduction}
\label{sec_2026_01_10_1700}
Let $K$ be an imaginary quadratic field with class number $h_K$ and let $p>3$ be a prime that splits in $K/\QQ$ as $(p)=\p\p^c$, where $c$ is the non-trivial element of $\Gal(K/\QQ)$. Let $$f =\sum_{n=1}^\infty a_n(f)q^n\in S_{k+2}^{\rm new}(\Gamma_0(N_f))$$ 
be a normalised cuspidal eigen-newform, where $\ord_p (N_f)\leq 1$. We assume that $K$ verifies the strong Heegner hypothesis relative to $N_f$ and fix a factorisation $N_f\cO_K=\fN\fN^c$. When $p\nmid N_f$, we let $f^\alpha\in S_{k+2}(\Gamma_0(N_f p))$ denote its $p$-stabilisation, where 
$$\alpha^2-a_p(f)\alpha+p^{k+1}=0\,,\qquad \ord_p(\alpha)<k+1$$
and $\ord_p$ is the valuation on $\CC_p$ normalised so that $\ord_p(p)=1$. When $p \mid N_f$, we then assume that $a_p(f)\neq p^{\frac{k}{2}}$, put $\alpha=a_p(f)$ and $f^\alpha=f$. 

Our goal in the present paper is to give a proof of the $p$-adic Gross--Zagier theorem (Theorem~\ref{thm_main_intro} below) for the $p$-adic $L$-function associated to the base change of $f^\alpha$ to $K$ (cf. \S\ref{eqn_2026_02_04_1051}) under the following hypotheses:
\begin{itemize}
\item[\mylabel{itemH}{\textbf{H}})] Hypotheses \eqref{item_BI}, \eqref{item_Heeg}, and \eqref{item_Disc} given as in \S\ref{subsubsec_2026_02_06_1123} hold true.
\item[\mylabel{itemNV}{\textbf{NV}})] One of the following holds for the given prime $p$ and $K$ with $p\nmid 6h_K$:
\begin{itemize}
       \item[\mylabel{item_NV1}{\textbf{NV1}})] There exists a $p$-old $p$-ordinary form $h\in S_m(\Gamma_0(Np))$ with $p\nmid N$ and $m\geq 2$ that satisfies  \eqref{itemH} such that $L_p'(h_{/K},m/2)\neq 0$.
        \item[\mylabel{item_NV2}{\textbf{NV2}})] There exists a $p$-old $p$-supersingular form $h\in S_2(\Gamma_0(Np))$ with $p\nmid N$ that satisfies  \eqref{itemH} such that $L_p'(h_{/K},1)\neq 0$.
        \item[\mylabel{item_NV3}{\textbf{NV3}})] There exists an elliptic curve $E_{/\QQ}$ that has good supersingular reduction at $p$ satisfying \eqref{itemH} and such that ${\rm ord}_{s=1}L(E,s)=1$. 
\end{itemize}
\end{itemize}
 
\begin{lthm}
    \label{thm_main_intro}
    Assume that \eqref{itemH} holds true, and assume $p\nmid 6h_K$. Let $h_f( \,,\, )$ be the $p$-adic height pairing given as in \S\ref{subsubsec_354_2026_02_06}. Let $\mathscr{A},\,\mathscr{B}\in \overline{\QQ}$ denote the non-zero constants given as in \S\ref{subsubsec_623_2026_02_06}. Then: 
  \begin{equation}
    \label{eqn_thm_main_intro}
        \frac{\mathscr A}{\mathscr{B}}\cdot\frac{d}{ds}L_p(f_\alpha{}_{/K},s)_{\vert_{s=\frac{k}{2}+1}}=(-1)^{\frac{k}{2}}\times 
        \begin{cases}
            \left(1-\frac{p^{\frac{k}{2}}}{\alpha}\right)^{4}\cdot \dfrac{h_f( z_f,z_f)}{(4|D_K|)^{\frac{k}{2}}}\, & \hbox{ if }\, p\nmid N_f\,,\\
          4\cdot \dfrac{h_f( z_f,z_f)}{(4|D_K|)^{\frac{k}{2}}}\,  &\hbox{ if $p\mid \mid N_f$ and $a_p(f)\neq p^{\frac{k}{2}}$ }\,.
        \end{cases}
    \end{equation}
        Here, $z_f$ is the Heegner cycle associated with the base-change of $f$ to $K$ in the first case, and it is as in Definition~\ref{defn_adhoc_Heegner} in the second. Furthermore, ${\mathscr A}/{\mathscr B}=1$ if \eqref{itemNV} holds true. 
\end{lthm}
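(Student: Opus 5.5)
The strategy is a ``wall-crossing'' argument built on Beilinson--Flach elements, not a comparison of kernels. We place $f^\alpha$ in a Coleman family $\f$ through it, and we take a Hida family (or CM family) $\g$ built from the theta series of Hecke characters of $K$. The three-variable Beilinson--Flach class $\mathcal{BF}_{\f,\g}$ then interpolates, through Perrin-Riou/Loeffler--Zerbes big logarithm maps, two $p$-adic Rankin--Selberg $L$-functions attached to the two interpolation regions (``walls''):
\begin{itemize}
\item the region where $\f$ dominates, which restricts to the base-change $L_p(f_\alpha{}_{/K},s)$ along the cyclotomic line;
\item the region where $\g$ dominates, which restricts to the BDP $p$-adic $L$-function $\mathcal L_p^{\rm BDP}$ along the anticyclotomic line.
\end{itemize}
The plan is to compute both specialisations at the central point $s=k/2+1$ and to relate them by a reciprocity law. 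On the BDP side, the BDP formula expresses the value at the relevant character in terms of $\log_{\omega}(z_f)^2$. On the cyclotomic side, $L_p$ vanishes at the centre because of the sign, and so its derivative is the interesting quantity.

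The key steps, in order, are as follows.

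First, we factor the restriction of the Beilinson--Flach class to the relevant line. Using the decomposition $\mathrm{Ind}_K^{\QQ}$, it becomes a product of a Heegner-type class and a Katz/Rubin unit class. This shows that the Beilinson--Flach element along the anticyclotomic/cyclotomic intersection is, up to the explicit constant $\mathscr A/\mathscr B$, equal to $z_f\otimes z_f$ (more precisely, the class lands in the self-dual tensor square).

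Second, we apply the explicit reciprocity laws. Pairing with $\omega_f$ yields the BDP value, recovering $\log(z_f)^2$ together with Euler factors, which produce the factor $(1-p^{k/2}/\alpha)^4$. Pairing with $\eta_f$ instead, and differentiating in the cyclotomic direction, yields $\frac{d}{ds}L_p$. Here we use the Rubin-style formula: the derivative of a Coleman map image of a class vanishing at the centre equals the $p$-adic height of its derivative class.

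Third, we compute the height. Via Nekovář's formalism for Selmer complexes, the derivative in the cyclotomic variable of the family class equals $h_f(z_f,z_f)$ up to the same period ratios; the sign $(-1)^{k/2}$ and the factor $(4|D_K|)^{-k/2}$ come from normalising the Heegner cycle and the Atkin--Lehner pairing. In the case $p\,\|\,N_f$, the Euler factor degenerates and the factor $4$ arises from the $\mathcal L$-invariant-free computation, with $z_f$ taken as in Definition~\ref{defn_adhoc_Heegner}.

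Finally, we address the constants. The ratio $\mathscr A/\mathscr B$ is a ratio of periods/normalisations that is independent of $f$ within the family, indeed ``universal'' for given $(K,p)$. To show it equals $1$ under \eqref{itemNV}, we compare with a known case: for $h$ as in \eqref{item_NV1}–\eqref{item_NV3}, the $p$-adic Gross--Zagier formula is already known (Perrin-Riou, Kobayashi, Nekovář), and nonvanishing of $L_p'$ lets us divide and conclude $\mathscr A/\mathscr B=1$.

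The main obstacle is the second-to-third step in the non-ordinary case with $k>2$. There one must justify that the cyclotomic derivative of the Coleman-family Beilinson--Flach class genuinely computes the height pairing, which requires control of the Selmer complex over a non-ordinary family using triangulations of $(\varphi,\Gamma)$-modules. My first attempt would be to work with the rank-one triangulation given by $\alpha$ and to apply the Benois/Nekovář formalism for heights attached to a splitting submodule.
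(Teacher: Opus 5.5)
Your overall skeleton matches the paper: Beilinson--Flach classes for $\Bf$ against a CM family, the $\Bf$- and $\Bg$-dominant reciprocity laws, a Rubin-type height formula, BDP, and fixing $\mathscr{A}/\mathscr{B}$ by comparison with a known case. But your first step, which is supposed to tie Beilinson--Flach elements to Heegner cycles, is wrong, and it is the crux of the argument.

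\textbf{The gap.} After Shapiro's lemma, the anticyclotomic class ${}_c{\rm BF}^{\tau}_{{\rm ac},\Bf}$ lies in $H^1(K,V^\dagger_{\Bf}\hatotimes\mathcal{H}^\iota_\tau(\Gamma^\circ_{\rm ac}))$. This is the same module that contains the big Heegner class $\mathfrak{Z}^\tau_\infty$; the class does not live in a tensor square. Nor does any decomposition of $\mathrm{Ind}_K^{\QQ}$ write it as a product of a Heegner class and an elliptic-unit class: a cup product of two $H^1$-classes lands in $H^2$, and no such identity exists. The true relation is a scalar one, and it is obtained only indirectly. This is exactly where the big-image hypothesis \eqref{item_BI} is used, and your plan never uses it. The paper's argument runs in three steps.
(a) The $\Bf$-dominant $p$-adic $L$-function vanishes \emph{identically} on the anticyclotomic line, because the sign is $-1$ at a dense set of specialisations. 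Injectivity of $\mathscr{L}^\tau_{\eta_{\Bf}}$ then kills the $\fp$-local minus-projection. So the class lies in $H^1_{\rm f}$ (Lemma~\ref{lemma_BF_vanish}), and its cyclotomic derivative $\mathfrak{d}^{(\fp)}_{\rm cyc}$ exists over the whole family.
(b) $H^1_{\rm f}(G_{K,S},V^{\dagger,\tau}_{\Bf,{\rm ac}})$ has rank one over $\Lambda_{\Bf}\hatotimes\mathcal{H}(\Gamma^\circ_{\rm ac})$ (Proposition~\ref{proposition_Selmer_rank_1}). This uses the Kobayashi--Ota theorem together with Nekov\'a\v{r}'s duality and Euler characteristic formula.
(c) Hence $\mathfrak{Z}^\tau_\infty=\lambda\cdot{}_c{\rm BF}^\tau_{{\rm ac},\Bf}$ in the fraction field. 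One finds $\lambda$ by applying $\mathscr{L}_{\omega_{\Bf}}\circ{\rm res}_\fp$ to both sides: the BDP reciprocity law for $\mathfrak{Z}_\infty$ (Theorem~\ref{reciprocity_Z_inf}) on one side, the $\Bg$-dominant law on the other. So $\lambda$ is, up to explicit factors, $\mathfrak{L}^\tau_\fp(\Bf)/\mathfrak{L}^{\Bg}_{{\rm ac},0}(\Bf)$ (Proposition~\ref{prop_BF=Z}).
Local information such as ``$\log_\omega$ of the Beilinson--Flach class is proportional to $\log(z_f)^2$'' cannot replace (b); without a rank-one statement it says nothing about the global class.

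\textbf{Consequences for your later steps.} Pairing the Beilinson--Flach class with $\omega_{\Bf}$ gives the $\Bg$-dominant Rankin--Selberg $L$-function, not the BDP one. BDP enters only through the Heegner side in (c). The Katz/elliptic-unit input appears only at the very end, as a factorisation of $L$-values: $L^{\rm BDP}_\fp(f_\alpha)^2/L_p^{(g_{\rm Eis})}(f_\alpha)=\log_p(u_\fp)/\mathscr{B}$ (BDV, Lemma 4.4).

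The Rubin-type formula (Proposition~\ref{Rubin_formula}) computes the height of the Selmer class ${}_c{\rm BF}^\tau_{{\rm ac},\Bf}$ against ${}_c{\rm BF}^{\tau^{-1}}_{{\rm ac},\Bf}$. It equals minus the Tate pairing of the derived $\fp$-local minus-class with the plus-part of ${}_c{\rm BF}^{\tau^{-1}}_{{\rm ac},\Bf}$ at $\fp$. Lemma~\ref{lemma_comparing_pairings} turns this into $\mathscr{L}_{\eta_{\Bf}}(\mathfrak{d}\,{\rm BF})\cdot\mathscr{L}_{\omega_{\Bf}}({\rm BF}^+)$, i.e.\ the cyclotomic derivative of the $\Bf$-dominant function times the $\Bg$-dominant function. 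Multiplying by $\lambda_\tau\lambda_{\tau^{-1}}$, one $\Bg$-dominant factor cancels, leaving $\mathfrak{L}^{\tau}_\fp\mathfrak{L}^{\tau^{-1}}_\fp/\mathfrak{L}^{\Bg}_{{\rm ac},0}$ (Theorem~\ref{Theorem_Big_GZ}). That ratio is exactly what the BDV factorisation evaluates at the Eisenstein point.

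The factor $(1-p^{k/2}/\alpha)^4$ comes from the interpolation formula for the big Heegner class, which enters the height twice (Corollary~\ref{cor_thm_bigheegnermain}). It does not come from a Beilinson--Flach reciprocity law.

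Finally, the whole argument must be an identity over $\Lambda_{\Bf}\hatotimes\mathcal{H}(\Gamma^\circ_{\rm ac})$, specialised only at the end. Working directly at $s=k/2+1$ would force you to divide by $\log(z_f)$ or by the $\Bg$-dominant value at a point where these may vanish.
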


\begin{remark}
\label{remark_2026_02_06_1016}
    \normalfont If the condition \eqref{itemNV} fails, then the $p$-adic Gross--Zagier formulae proved in all the works \cite{PR,nekovarGZ, kobayashi13, kobayashi2014_GZ} (for the prime $p$ and $K$ as in the statement of the theorem) would read $0=0$ for all $p$-stabilised eigenforms whose Galois representation satisfies\footnote{\label{footnote_BI}In the situation of \eqref{item_NV3}, the big image condition \eqref{item_BI} automatically holds true thanks to \cite[Proposition~2.1]{edixhoven92} combined with \cite[Theorem 6.1]{ContiLangMedvedovsky}, since we assume $p\neq 2,3$. In the situation of \eqref{item_NV1}, one may relax \eqref{item_BI} to the requirement that the residual representation $\overline{\rho}_h$ be absolutely irreducible.} \eqref{item_BI}. In that case, \eqref{eqn_thm_main_intro} recovers the same conclusion. Otherwise, the $p$-adic Gross--Zagier formula proved in \cite{perrinriou87,nekovarGZ, kobayashi13, kobayashi2014_GZ} for \emph{some} eigenform $h$ does \emph{not} read $0=0$, in which case we have ${\mathscr A}/{\mathscr B}=1$. Finally, it seems plausible that an improvement of the main results of \cite{AlexSmith1,AlexSmith2} would imply \eqref{item_NV3}.  \hfill $\triangle$
\end{remark}

Our approach to prove Theorem~\ref{thm_main_intro} is uniform in that it simultaneously treats the known cases (cf. \cite{perrinriou87,nekovarGZ, kobayashi13, kobayashi2014_GZ} as well as \cite{disegni17}, see our review of the main results of these important papers in \S\ref{subsubsec_overview1} below) as well some new cases (e.g. when $k>2$ and $\ord_p(\alpha)>0$, which were announced in \cite{kobayashi_higherweight_nonord_GZ}). It is fundamentally different\footnote{\label{footnote_BDV_GZ} This approach was envisioned by the authors of \cite{BDV}; cf. Remark~1.3 in op. cit. for an outline, except that our methods do not involve Beilinson--Kato elements.} from the earlier approaches: In our proof of Theorem~\ref{thm_main_intro}, the key ingredient is the celebrated ``$p$-adic Waldspurger limit formula'' of Bertolini--Darmon--Prasanna (cf. \S\ref{subsubsec_overview2} for an overview), and it resonates with the ``wall-crossing principle'' (cf. \cite{BRS}, see also \cite{BS, BC, BCPvP}) emphasising the relationship between the $p$-adic variational properties of explicit GGP formulae and their arithmetic counterparts (across different GGP regions).

We remark that Theorem~\ref{thm_main_intro} is deduced from Theorem~\ref{Theorem_Big_GZ}, where we prove a $p$-adic Gross--Zagier formula for families of eigenforms. Note that Theorem~\ref{Theorem_Big_GZ} specialises also to $p$-adic Gross--Zagier formulae for generalised Heegner cycles for $f_{/K}\times \chi$ (where $\chi$ is a Hecke character of $K$ such that $\chi\circ c =\chi^{-1}$, of infinity type $(-\ell,\ell)$ with $|\ell|<\frac{k}{2}$), which reproves and generalises the main results of \cite{shnidman_padic_GZ}. 

\subsection{An overview}
\label{subsec_overview}
We review earlier works\footnote{\label{footnote_GZ_GHC} We focus on the $p$-adic Gross--Zagier theorem in the most basic setting where the strong Heegner hypothesis holds. There are many generalisations in the literature where one may relax this condition (and appeal to Heegner points associated to non-split quaternion algebras), or allow $f$ to have a central character and consider the central critical $L$-value for the appropriate self-dual twist. Due to space considerations, we have not included these in our summary.} related to the $p$-adic Gross--Zagier formulae and compare the methods of these to those in our paper.
\subsubsection{A review of earlier work}
\label{subsubsec_overview1}
In the case when $k=2$, $p\nmid N_f$ and $\ord_p(\alpha)=0$, Perrin--Riou proved \eqref{eqn_thm_main_intro}. This was later extended by \cite{nekovarGZ} to the case $k>2$ (still when $p\nmid N_f$ and $\ord_p(\alpha)=0$). Disegni in \cite{disegni17} further extended these to the non-split semi-stable case (i.e. allowing $\ord_p(N_f)=1$ but in that case requiring\footnote{\label{footnote_Disegni} We remark that if $\ord_p(N_f)=1$ and $k>2$, the form $f$ is non-$p$-ordinary, hence it escapes the treatment of \cite{disegni17}.} $k=2$ and $a_p(f)\neq 1$). 

The methods of all these papers resemble those in the work of Gross--Zagier, and they dwell on the comparison of a \emph{geometric kernel} (a modular form whose Fourier coefficients are constructed in terms of the $p$-adic heights of Heegner cycles) and the \emph{analytic kernel} (another modular form constructed using the Rankin--Selberg method that is related to the derivative of the $p$-adic $L$-function). 

A key ingredient in these works is the vanishing of the local-at-$p$ contribution of the $p$-adic height pairing, which is proved using a trick of Perrin--Riou, which indeed requires $\alpha\neq 1$ when $k=2$ (which, in turn, avoids exceptional zeroes of $p$-adic $L$-functions\footnote{\label{footnote_NekvsSch} This condition also guarantees that Nekov\'a\v{r}'s heights that we consider here (given in terms of Bockstein morphisms) coincide with those of Schneider given in terms of universal norms, or that of Nekov\'a\v{r}--Zarhin given in terms of splittings of the Hodge filtration; cf. \cite[Remark 1.3.2]{disegni17}.}). The proof of this vanishing is considerably more difficult in the non-ordinary case (i.e. when $\ord_p\,a_p(f)>0$), and it has been settled in \cite{kobayashi13,kobayashi2014_GZ} when $p\nmid N_f$ and $k=2$. 

\subsubsection{A sketch of the proof of Theorem~\ref{thm_main_intro}}
\label{subsubsec_overview2}
The key steps in the proof of Theorem~\ref{thm_main_intro} are as follows.
\begin{itemize}
    \item Description of the $p$-adic $L$-function $L_p(f_\alpha{}_{/K},s)$ in terms of Beilinson--Flach elements (via the ``first reciprocity laws''), cf. Corollary~\ref{corollary_reciprocity_for_anticyclotomic_BF}(i).
    \item Relying on the first step, a Rubin-style formula expressing $L_p'(f_\alpha{}_{/K},s)$ in terms of the $p$-adic height of the Beilinson--Flach element; cf. Proposition~\ref{Rubin_formula}.
    \item Utilising the BDP formula for Heegner cycles (Theorem~\ref{reciprocity_Z_inf}) and the ``second reciprocity laws'' for Beilinson--Flach elements (Corollary~\ref{corollary_reciprocity_for_anticyclotomic_BF}(ii)), one can compare their $p$-local images. Moreover, relying on the Euler system machinery (it is precisely this point where we need \eqref{item_BI}, the big image condition), prove that the comparison of the $p$-local images of Heegner points and Beilinson--Flach elements can be refined to a global comparison in the appropriate Selmer group they live in; cf. Proposition~\ref{prop_BF=Z}.
    \item Plug this comparison in the second step to deduce \eqref{eqn_thm_main_intro}.
\end{itemize}

We remark that the case when $\ord_p(N_f)=1$ and $a_p(f)= p^{\frac{k}{2}}$ escapes our treatment, and it is the subject of the work in progress by the first-named author with S. Kobayashi and K. Ota.
\subsubsection*{Acknowledgements} It will be clear to the reader that every single step in our argument relies crucially on the groundbreaking ideas of Henri Darmon and his coauthors. The first-named author (K.B.) wishes to thank Henri for his generosity both in sharing his ideas and also his constant support and encouragement throughout K.B.'s career. The second author (P.N.) wishes to thank the first author for their continuous encouragement and enduring patience in the preparation of this article. He would also like to thank Ra\'ul Alonso Rodr\'iguez, Antonio Cauchi, and Ju-Feng Wu for many stimulating conversations and for answering countless questions. The authors are grateful to the anonymous referee for their constructive and insightful feedback, and to Rodolfo Venerucci for an enlightening exchange.

K.B.’s research in this publication was conducted with the financial support of Taighde \'{E}ireann -- Research Ireland under Grant number IRCLA/2023/849 (HighCritical), and P.N. is supported by a NUI travelling studentship. 
\section{Preliminaries}
\label{subsec_2025_12_08_1449}

\subsection{Basic set-up}

For a profinite abelian group $G$, we let $\LL(G):=\ZZ_p[[G]]$ denote its completed group ring with coefficients in $\ZZ_p$. We denote by $\LL^\sharp(G)$ (resp. $\LL^\iota(G)$) the free $\LL(G)$-module of rank one equipped with the tautological $G$-action (resp. on which $g\in G$ acts by multiplication by the group like element $[g^{-1}]$).  For any topological $G$-module $M$, we denote by $H^i(G,M)$ the continuous cohomology of $G$ with coefficients in $M$.

We fix an embedding $\iota_p: \overline{\QQ}\hookrightarrow \mathbb{C}_p$ and assume that the image of the prime $\fp\subset \cO_K$ falls within the maximal ideal of $\cO_{\mathbb{C}_p}$. We also fix an embedding $\iota_\infty: \overline{\QQ}\hookrightarrow \mathbb{C}$ as well as an isomorphism $\mathfrak{j}:\mathbb{C}\stackrel{\sim}{\ra}  \mathbb{C}_p$ such that $\mathfrak{j}\circ \iota_\infty =\iota_p$. Let's denote the ring of integers of the completion of the maximal unramified extension $\QQ_p^\ur$ of $\Qp$ by $\sW$. We also fix a finite extension of $\QQ_p$, which we will enlarge as befitting our requirements.

Let us put $\mu_{p^\infty}:=\varinjlim \mu_{p^n} \subset \overline{\QQ}^\times$, which we view also as a subgroup of $\mathbb{C}_p^\times$ via $\iota_p$. We let $\chi_\cyc^\QQ$ denote the cyclotomic character giving the action of $G_\QQ$ on $\mu_{p^\infty}$. We denote its restriction to $G_K$ by $\chi_\cyc$. We normalise the  Hodge--Tate weights so that $\chi_\cyc^\QQ$ has weight $-1$.

\subsection{Galois representations attached to modular forms} 
\label{subsec_2026_01_11_1350}
We review the construction of Galois representations attached to modular forms, following \cite[\S6.4]{benois_buyukboduk_interpolation_BK} for the most part.

\subsubsection{} Let $h \in S_{w+2}(\Gamma_1(M),\varepsilon_h)$ be a newform, and let $V_h$ denote Deligne's $p$-adic Galois representation attached to $h$, which is realised as the $h$-isotypic quotient of $H^1_{\textup{\'et}}(Y_1(M)_{\overline{\QQ}},{\rm TSym}^{w}\mathscr{H}_{\QQ_p}(1))\otimes_{\QQ_p}E$ for the action of dual (covariant) Hecke operators, where the \'etale sheaf ${\rm TSym}^{w}\mathscr{H}_{\QQ_p}$ is as in \cite[\S2.3]{KLZ2}. 

One may realise the dual representation $V_h^*$ as the $h$-isotypic direct summand of $H^1_{\textup{\'et},c}(Y_1(M)_{\overline{\QQ}},{\rm Sym}^{w}\mathscr{H}_{\QQ_p}^\vee)$ for the action of usual (contravariant) Hecke action,  where ${\rm Sym}^{w}\mathscr{H}_{\QQ_p}^\vee$ is the dual sheaf. The natural isomorphism $\mathscr{H}_{\QQ_p}^\vee(1)\simeq \mathscr{H}_{\QQ_p}$ then induces a Hecke equivariant morphism
$$H^1_{\textup{\'et},c}(Y_1(M)_{\overline{\QQ}},{\rm Sym}^{w}\mathscr{H}_{\QQ_p}^\vee)(w+1)\lra H^1_{\textup{\'et}}(Y_1(M)_{\overline{\QQ}},{\rm TSym}^{w}\mathscr{H}_{\QQ_p}(1))$$
given by the Atkin--Lehner operator $W_M$ (normalised as in \S 2.5 of \cite{KLZ2}), which in turn yields a natural isomorphism 
\begin{equation}
\label{eqn_2026_01_10_1340}
   \lambda_M(h)^{-1}W_{M}\,:\, V_h(\mathscr{H}_{\QQ_p}^\vee):=V_{h^c}^*(w+1)\simeq V_h=:V_h(\mathscr{H}_{\QQ_p})\,,
\end{equation} 
where $h^c=h\otimes\varepsilon_h^{-1}$ is the dual form and $ \lambda_M(h)$ is the Atkin--Lehner pseudo-eigenvalue, given as in the beginning of \cite[\S 10.1]{KLZ2}.

\subsubsection{} The Poincar\'e duality pairing
$$\{\,,\,\}_{M}\,:\,\, H^1_{\textup{\'et}}(Y_1(M)_{\overline{\QQ}},{\rm Sym}^{w}\mathscr{H}_{\QQ_p}^\vee)\otimes H^1_{\textup{\'et}}(Y_1(M)_{\overline{\QQ}},{\rm TSym}^{w}\mathscr{H}_{\QQ_p})\lra \QQ_p$$
(see \cite{milne_etale_cohomology}) restricts to the perfect pairing 
\begin{equation}
    \label{eqn_2026_01_10_1428}
    \{\,,\,\}_{M}\,:\, V_{h^c}^*\otimes V_h \lra E\,.
\end{equation}
Combined with the isomorphism \eqref{eqn_2026_01_10_1340}, \eqref{eqn_2026_01_10_1428} yields a perfect Hecke equivariant pairing
$$\langle\,,\,\rangle_{M}\,:\, V_h(\mathscr{H}_{\QQ_p}^\vee)\otimes V_h(\mathscr{H}_{\QQ_p}^\vee) \lra E(w+1)\,, \qquad x\otimes y \mapsto \{x,\lambda_M^{-1}W_M(y)\}_M\,,$$
Using the fact that the normalised Atkin--Lehner isomorphism $\lambda_M(h)^{-1}W_{M}$ in  \eqref{eqn_2026_01_10_1340} is a self-adjoint involution with respect to the pairing \eqref{eqn_2026_01_10_1428}, we will denote the pairing 
\begin{equation}
\label{eqn_2026_1_10_1453}
    V_h \otimes V_h\xrightarrow{\,\langle\,,\,\rangle_{M}\,} E(w+1)
\end{equation}
also by $\langle\,,\,\rangle_{M}$.

\subsubsection{} Let us assume until the end of \S\ref{subsec_2026_01_11_1350} that $h\in S_{w+2}^{\rm new}(\Gamma_0(N))$ with $p\nmid N$. In this case, we have $h^c=h$. 

Let $\lambda$ be a root of the $p^{\rm th}$ Hecke polynomial $X^2-a_p(h)X+p^{w+1}$ of $h$. We denote by $V_{h^\lambda}$ the $h^\lambda$-isotypic quotient of the cohomology of $H^1_{\textup{\'et}}(Y_0(Np)_{\overline{\QQ}},{\rm TSym}^{w}\mathscr{H}_{\QQ_p})(1)$. As above, we will denote this representation by $V_{h^\lambda}(\mathscr{H}_{\QQ_p})$ whenever it is prudent to do so. Similarly, we also have the realisation 
\begin{equation}
\label{eqn_2026_1_10_1404}
   H^1_{\textup{\'et}}(Y_0(Np)_{\overline{\QQ}},{\rm Sym}^{w}\mathscr{H}_{\QQ_p}^\vee)(w+1)\supset V_{h^\lambda}^*(w+1)=:V_{h^\lambda}(\mathscr{H}_{\QQ_p}^\vee) \xrightarrow[\sim]{\,W_{Np}\,} V_{h^\lambda}(\mathscr{H}_{\QQ_p})
\end{equation} 
as the $h^\lambda$-isotypic direct summand, where the isomorphism is given by the Atkin--Lehner operator $W_{Np}$.

\subsubsection{} We remark that the Abel--Jacobi images of the classical Heegner cycles land in the cohomology with coefficients in the cohomological Galois representation $V_h(\mathscr{H}_{\QQ_p}^\vee)$. However, following \cite{KLZ2,JLZ}, one interpolates Galois representations $V_{h^\lambda}(\mathscr{H}_{\QQ_p})=:V_{h_\lambda}$ to a big Galois representation, and big Heegner cycles will take coefficients in this big Galois representation. It is therefore important to note how these two spaces are identified.

\subsubsection{} We have the $p$-stabilisation isomorphisms $V_h\xrightarrow{({\rm pr}^{\lambda})^*} V_{h^\lambda}$ (resp., $V_{h^\lambda}\xrightarrow{{\rm pr}^{\lambda}_*} V_h$) induced from the pull-backs (resp. push-
forwards) of the degeneracy morphisms $Y_0(Np)\rightrightarrows Y_0(N)$, cf. \cite[7.3]{KLZ2}. The morphism $({\rm pr}^{\lambda})^*$ is the adjoint of ${\rm pr}^{\lambda}_*$ with respect to the Poincar\'e duality pairing $\{\,,\,\}_{Np}$, which, combined with isomorphism \eqref{eqn_2026_1_10_1404}, induces a perfect pairing 
$$\{\,,\,\}_{Np}'\,:\,V_{h^\lambda} \otimes V_{h^\lambda} \lra E(w+1)\,,\qquad x\otimes y \mapsto \{W_{Np}^{-1}(x),y\}_{Np}\,.$$
We then have the following commutative diagram:
\begin{equation}
    \label{label_eqn_2026_01_09_2051}
    \begin{aligned}
        \xymatrix@C=.1cm{
        V_{h^\lambda} &\otimes& V_{h^\lambda} \ar[d]^-{{\rm pr}^{\lambda}_*}
        \ar[rrrrrrr]^-{\{\,,\,\}_{Np}'} &&&&&&& E(w+1)\ar@{=}[d]\\
        V_h \ar[u]^-{W_{Np}\circ ({\rm pr}^{\lambda})^*}&\otimes & V_h \ar[rrrrrrr]_-{\langle \,,\,\rangle_N} &&&&&&& E(w+1)
        }
    \end{aligned}
\end{equation}

\subsection{Iwasawa algebras}\label{subsec_Iwasawa_Algebras}
Let $K(p^\infty):= \varinjlim K(p^n),$ where $ K(p^n)$ is the ray-class field modulo $p^n$. Then for $\fq \in \{\fp, \fp^c\}$, we have the subfields $K(\mu_{p^\infty})$,  $H_{p^\infty}$, $K(\fq^ \infty)$ of $K(p^\infty)$, where $H_{p^{\infty}}:= \varinjlim H_{p^n}$ and $H_{p^n}$ is the ring class field of $K$ modulo $p^n$, and $K(\fq^\infty):= \varinjlim K(\fq^n)$ where $K(\fq^n)$ is the ray class field of $K$ modulo $\fq^n$. Let us set
\begin{align}
    &\Gamma_{\rm cyc}:= \Gal(K(\mu_{p^\infty})/K), &\Gamma_{\rm ac}:= \Gal(H_{p^\infty}/K), &&\Gamma_{\fq}:= \Gal(K(\fq^\infty)/ K), &&\Gamma:= \Gal(K(p^\infty)/K). \notag 
\end{align}
Let $K_\infty \subset K(p^\infty)$ denote the $\Zp^2$-extension of $K$. We write $K_{\cyc}\subset K_\infty \cap K(\mu_{p^\infty})$ (resp.\ $K_{\rm ac}\subset K_\infty \cap H_{p^\infty}$) for the cyclotomic (resp.\ anticyclotomic) $\Zp$-extension of $K$. For $\fq \in \{\fp, \fp^c\}$, let $K_\infty^{(\fq)}\subset K_\infty \cap K(\fq^{\infty})$ be the $\Zp$-extension of $K$, unramified outside $\fq$. For $?\in\{\cyc,\ac\}$, set 
\begin{align}
    &\Gamma_?^\circ := \Gal(K_?/K), &\Gamma^\circ_{\fq}:=\Gal(K^{(\fq)}_\infty/K), &&\Gamma^\circ := \Gal(K_\infty/K). \notag
\end{align}
Let $h_p \in \NN$ be such that ${\rm im}\left(G_{K_{\fp}}\to \Gamma_{\fp}^\circ\right)=(\Gamma_{\fp}^\circ)^{p^{h_p}}$. We remark that $h_p=0$ if the class number $h_K$ of $K$ is coprime to $p$.

Let $c$ denote the non-trivial element of $\Gal(K/\Q)$. Then $c$ acts on $\Gamma^\circ$ by $g \mapsto \widetilde{c}^{-1}g \widetilde{c}$, where $\widetilde{c}$ denotes any lift of $c$ to $\Gal(K_\infty/\Q)$ and $g\in \Gamma^\circ$. We have the decomposition 
\[\Gamma^\circ = \Gamma^+ \times \Gamma^-\,, \] 
where $\Gamma^\pm$ denotes the rank-one $\Zp$-submodules on which $c$ acts by $\pm1$. Under the natural projections, $\Gamma^+$ (resp.\ $\Gamma^-$) maps isomorphically onto $\Gamma^{\circ}_\cyc$ (resp.\ $\Gamma^{\circ}_{\ac}$). Following the conventions in \cite{burungale2024zetaelementsellipticcurves}, we fix topological generators $\gamma_?\in\Gamma^\circ_?$ for $?\in\{\pm,\cyc,\ac,\fp,\fp^c\}$,  compatible with these identifications and satisfying $c\,\gamma_{\fp}\,c^{-1}=\gamma_{\fp^c}$. Moreover we require that $\gamma_+ \mapsto \gamma_\fq^{p^{h_p}/2}$, and  $\gamma_- \mapsto \gamma_\fp^{1/2}$ and $\gamma_- \mapsto \gamma_{\fp^c}^{-1/2}$.\\\\
For $?\in\{\emptyset,\pm,\cyc,\ac,\fq\}$, and $\mathcal{O}$ some $p$-adically complete $\Zp$-algebra (most commonly for us, the ring of integers of some finite extension of $\Qp$), we define the Iwasawa algebra \[\LL_\mathcal{O}(\Gamma^\circ_?) := \mathcal{O} \llbracket \Gamma^\circ_? \rrbracket.\] Unless there is danger of confusion, we will often drop the $\mathcal{O}$ from the notation and just write $\Lambda(\Gamma^\circ_?)$. If $E$ is some finite extension of $\QQ_p$, let us also set \[\mathcal{H}_E:= \{f(X)\in E[[X]] : f(X) \text{ converges on ann}(0,1)\}\]
where $\text{ann}(0,1) = \{x\in \mathbb{C}_p: |x|_p<1\}$ and define, for $?\in\{{\cyc, \ac, \fq}\}$ \[\mathcal{H}_E(\Gamma^\circ_?):=\{f(\gamma_? -1): f(X) \in \mathcal{H}_E\}. \] If $A$ is some affinoid algebra over $E$, then we also set \[\mathcal{H}_A := \mathcal{H}_E \hatotimes A.\]

The canonical map $\Gamma^+\times \Gamma^- \xrightarrow{\sim} \Gamma^\circ $ gives the isomorphism
\begin{equation}\label{eq_isom_+_-}
\Lambda(\Gamma^+) \hatotimes \Lambda(\Gamma^-) \xrightarrow{\sim} \Lambda(\Gamma^\circ).
\end{equation}
For $(\star, \bullet) \in (\rm{cyc}, \rm ac)$ or $(\rm cyc, \fp)$, the natural projections give $\Gamma^\circ \xrightarrow{\sim}\Gamma^\circ_\star \times \Gamma^\circ_\bullet$ and hence gives the isomorphism 
\begin{equation} \label{eq_isom_star_bullet}
\Lambda(\Gamma^\circ) \xrightarrow{\sim} \Lambda(\Gamma^\circ_\star) \hatotimes \Lambda(\Gamma^\circ_\bullet) .    
\end{equation} 
By our choices of generators, we see that the isomorphism obtained by composing \eqref{eq_isom_+_-} and \eqref{eq_isom_star_bullet} is given explicitly by 
\[\begin{cases}
    \gamma_+ \mapsto \gamma_{\rm cyc}, &\gamma_- \mapsto \gamma_{\rm ac}, \ \ \ \ \text{if } (\star, \bullet) = (\rm cyc, \rm ac)\\
    \gamma_+ \mapsto (\gamma_{\rm cyc}, \gamma_\fp^{p^{h_p}/2}), &\gamma_- \mapsto \gamma_\fp^{1/2},  \ \ \text{if } (\star, \bullet) = (\rm cyc, \fp).
\end{cases}\]

For $? \in \{\cyc, \ac, \fp \}$, we have \begin{equation}
    \Gamma_? \cong \Gamma^\circ_? \times \Delta_?, \notag
\end{equation}
where $\Delta_?$ is a finite abelian group, and hence we have the decomposition 
\begin{equation}
   \Lambda(\Gamma_?) \cong \bigoplus_{\psi \in \widehat{\Delta}_?} \Lambda_\psi(\Gamma^\circ_?):= \bigoplus_{\psi \in \widehat{\Delta}_?} \Lambda(\Gamma_?^\circ)\cdot e_{\psi}, \label{equation_Lambda(Gamma)_decomposition}
\end{equation}
where $\widehat{\Delta}_?:= \Hom(\Delta_?, \CC_p^\times)$ and $e_{\psi} = \frac{1}{|\Delta_?|}\sum_{\sigma \in \Delta_?} \psi^{-1}(\sigma) \cdot \sigma$ is the idempotent associated to $\psi$.

In what follows, it will be understood that $\psi$ belongs to $\Delta_?$  whenever it appears in the expression $\Lambda_\psi(\Gamma^\circ_?)$. 

\begin{defn} \label{definition_fp_to_ac}
    Suppose $\tau=\tau_\p$  is an element of the character group $\widehat{\Delta}_\p$. Let us denote by ${}^{\rm a}{\tau}$ the Hecke character attached to $\tau$ by class field theory. We define the ring class character ${}^{\rm a}{\tau}_{\ac} := {}^{\rm a}\tau\big{/}{{}^{\rm a}\tau^c}$ and we let $\tau_{\ac} \in \widehat{\Delta}_{\ac}$ denote the Galois character associated with ${}^{\rm a}{\tau}_{\ac}$.
\end{defn}
%\orange{\textbf{Q}: Can I say something about the image of this map from $\widehat{\Delta}_\p \rightarrow \widehat{\Delta}_\ac$?}
\subsubsection{Weight space}
Let $\mathcal O$ be the ring of integers of a finite extension $E$ of $\mathbb Q_p$ (which we shall enlarge as our purposes require). Let us denote by $[ \,\cdot\, ] \colon \mathbb Z_p^{\times} \hookrightarrow \LL_{\rm wt}^{\times}$ the tautological injection, where $\LL_{\rm wt}:=\Lambda(\mathbb Z_p^{\times})$. We define the universal weight character $\bbchi$ as the composite map $G_{\mathbb Q} \xrightarrow{\chi_{\cyc}} \mathbb Z_p^{\times} \hookrightarrow \Lambda_\wt^{\times},$  where $\chi_{\cyc}$ is the cyclotomic character giving the Galois action on the $p$-power roots of unity.

We regard integers as elements of the weight space $\cW={\rm Spf}\,\LL_{\rm wt}$ via 
$$\mathbb Z \lra \Hom_{\mathcal O}(\Lambda_{\wt}, \mathcal O)=\cW(\cO)\,, \qquad n \mapsto (\nu_n \colon [x] \mapsto x^n).$$  
We call a ring homomorphism $\Lambda_{\wt} \xrightarrow{\nu} \mathcal O$ an arithmetic specialisation of weight $k \in \mathbb{N}$ if $\nu$  agrees with $\nu_k$ on a finite-index subgroup of $[\ZZ_p^\times]$.

For an integer $k$, we define 
$\Lambda_\wt^{(k)}:=e_{k}\LL_{\rm wt} \simeq \Lambda(1+p\mathbb Z_p)$ as the component determined by the weight $k$.

\subsection{Hida families} 
\label{subsec_Hida_families}
We let $\h=\sum_{n=1}^{\infty} \mathbb{a}_{n}(\h)q^n \in \mathbb{I}_{\h}[[q]]$ denote the branch of the primitive non-Eisenstein Hida family of tame conductor $N_\h$ and nebentype character $\varepsilon_\h=\varepsilon_\h^{(t)}\varepsilon_\h^{(p)}$ (where the conductor of the Dirichlet character $\varepsilon_\h^{(t)}$ divides $N_\h$, and the conductor of $\varepsilon_\h^{(p)}$ divides $p$)\,, which admits a crystalline specialisation $h_{\circ}$ of weight $k$. Here, $\mathbb{I}_{\h}$ is the branch (i.e. the irreducible component) of Hida's universal ordinary Hecke algebra determined by $h_{\circ}$, module-finite flat algebra over $\LL_{\rm wt}^{(k)}$. We denote by $ \bbchi_{\f}$ the compositum
\[
   G_\QQ\xrightarrow{\bbchi} \LL_{\rm wt}^\times \twoheadrightarrow \LL_{\rm wt}^{(k),\times}\lra \mathbb{I}_{\h}^\times\,.
\]
We let $\m_\h < \mathbb{I}_{\h}$ denote the maximal ideal.

\subsubsection{} Let $\Sigma$ be a finite set of places of $\QQ$ that contains all those that divide $pN_{\h}$, as well as the archimedean prime. By the fundamental work of Hida, there exists a unique Galois representation 
$$ \rho_{\h}\,: \,G_{\mathbb Q, \Sigma} \rightarrow \GL_2(\Frac(\mathbb{I}_{\h}))  $$ 
attached to $\h$, that interpolates Deligne's two-dimensional Galois representations attached to classical specialisations of $\h$ (cf. \S\ref{subsubsec_224_2025_12} below).  We denote by $V_{\h} \subset \text{Frac}(\mathbb{I}_{\h})^{\oplus 2}$ the Ohta lattice. More precisely, $V_{\h}$ corresponds to $M(\h)^*\otimes \LL_\mathbf{a}$ in the notation of \cite[\S7.5]{KLZ2}, which realises the Galois representation $\rho_{\h}$ in the \'etale cohomology groups of a tower of modular curves. 

\subsubsection{} 
\label{subsubsec_2025_12_14_1514}
Let us consider the following conditions on $V_{\h}$.
\begin{enumerate}
 \item[\mylabel{item_Irr}{\textbf{Irr}})] The residual $G_\QQ$-representation $V_{\h}\otimes \mathbb{I}_\h/\m_\h=:\overline{V}_\h$ is irreducible.
 \item[\mylabel{item_Dist}{\textbf{Dist}})] The semi-simplification of $\overline{V}_\h\,{\vert_{G_{\QQ_p}}}$ is not scalar.
\end{enumerate}
We remark that (a) ensures that any $G_{\mathbb Q}$-stable lattice in the field of fractions is homothetic to $V_{\h}$, whereas (b) supplies one with an integral $p$-ordinary filtration. When (a) or (b) fails, then the characterisation of a lattice $V_{\h}$ is much more subtle. We shall elaborate on this point later, following \cite{burungale2024zetaelementsellipticcurves, BDV} closely, where $\h$ will be taken as a particular CM Hida family.

\subsubsection{}
\label{subsubsec_213_2025_12_09_1556}
Let us assume henceforth that $\varepsilon_\h=\varepsilon_\h^{(t)}$. Let us denote by $\h^c:=\h\otimes \varepsilon_\h^{-1}$ the conjugate family. As remarked in \cite[Lemma 3.4]{LoefflerCMB}, the family $\h^c$ is also primitive of level $N_\h$. On identifying $\mathbb{I}_{\h^c}$ with $\mathbb{I}_\h$, we have a perfect $G_\QQ$-equivariant Poincar\'e duality pairing
\begin{equation}
    \label{eqn_2025_12_09_1114}
    V_{\h^c} \otimes_{\mathbb{I}_\h} V_{\h}\xrightarrow{\,\langle\,\,,\,\,\rangle_\h\,} \bbchi_\h\chi_\cyc^\QQ\,,
    \end{equation}
The pairing $\langle\,\,,\,\,\rangle_\h$ has the following interpolative properties (cf. \cite[\S1.1.8--9]{BCPvP} and \cite[\S3.1.1]{BL-GHC}). Let $\kappa$ be an $E$-valued classical specialisation of $\mathbb{I}_\h$ of weight $\wt(\kappa)=w$ and trivial wild character, so that $\h_\kappa \in S_{w+2}(\Gamma_1(N_\h)\cap \Gamma_0(p^r))$ for some natural number $r$.

\begin{itemize}
   %\item[\mylabel{item_P0}{\textbf{P0}})] L Then the following diagram commutes:
   %$$\xymatrix{ 
   %V_{\h} \otimes_{\mathbb{I}_\h} V_{\h^c}\ar[rr]^-{\,\langle\,\,,\,\,\rangle_\h\,}\ar[d]_{\kappa}&& \bbchi_\h\chi_\cyc^\QQ \ar[d]^{\kappa}\\
%    V_{\h_\kappa} \otimes_{E} V_{\h^c_\kappa}\ar[rr]_-{\,\kappa\,\circ\,\langle\,\,,\,\,\rangle_\h\,}&& E(w+1)\,.  }$$
 \item[\mylabel{item_P1}{\textbf{P1}})] Assume in addition that either $\h_\kappa$ is a newform with $r>0$. Then the pairing $\langle\,\,,\,\,\rangle_{\h_
    \kappa}$ can be described as follows: 
$$\kappa\circ \langle x,y\rangle=\alpha(\kappa)^{r}\{x,W_{Np^r}^{-1}(y)\}_{Np^r}=\alpha(\kappa)^{r}\lambda_{N_\h p^r}(\h_\kappa)^{-1}\langle x,y\rangle_{Np^r}\,,$$
where $\langle\bullet,\circ\rangle_{Np^r}$ is the pairing \eqref{eqn_2026_1_10_1453}.

  \item[\mylabel{item_P2}{\textbf{P2}})] 
  Suppose that $\h_\kappa\in  S_{w+2}(\Gamma_0(N_\h p))$ is $p$-old (in particular, we implicitly assume that $\varepsilon_\h$ is trivial), and let $\h_\kappa^\circ \in S_{w+2}(\Gamma_0(N_\h))$ denote the newform associated to $\h_\kappa$.   Then, the following diagram commutes:
$$
   \xymatrix@C=0.1cm{ 
    V_{\h_\kappa} &\otimes_{E}& V_{\h_\kappa}\ar[rrrrrrr]^-{\kappa\,\circ\,\langle\,\,,\,\,\rangle\,}\ar[d]^-{{\rm pr}_*^{\alpha(\kappa)}} &&&&&&& E(w+1)
    %^{\times\,\lambda_{\h_\kappa^\circ}\,\alpha(\kappa)\,\mathcal{E}(\h_\kappa)\,\mathcal{E}^*(\h_\kappa)}
    \\
    V_{\h_\kappa^\circ} \ar[u]^{U_p^{-1}W_{Np}\, ({\rm pr}^{\alpha(\kappa)})^*} &\otimes_{E} &V_{\h_\kappa^\circ}\ar[rrrrrrr]_{\,\langle\,\,,\,\,\rangle_{\h_
    \kappa^\circ}\,} &&&&&&& E(w+1)\ar@{=}[u]\,,
    %_{\times\,\alpha(\kappa)}\,,
   }$$
   %where $\mathcal{E}(\h_\kappa)=\left(1-\frac{p^{w}}{\alpha(\kappa)^2}\right)$ and $\mathcal{E}^*(\h_\kappa)=\left(1-\frac{p^{w+1}}{\alpha(\kappa)^2}\right)$.
   cf. \eqref{label_eqn_2026_01_09_2051} and note the identity $U_p'W_p^{-1}U_p^{-1}=W_p^{-1}$ on $p$-old forms of level $\Gamma_0(Np)$. 
 \end{itemize}

\subsubsection{}
\label{subsubsec_213_2025_12_09}
We will sometimes restrict a Hida family $\h$ to a wide open disc $U_\mathbf{h}:={\rm Spm}(\LL_\h)$ (where $\LL_\h\simeq \cO\left[\left[\frac{X-k}{e}\right]\right][1/p]$ for some $e\in E$) about a classical point $k$ and contained in ${\rm Spm}(\mathbb{I}_\h[\frac{1}{p}])$; which we will shrink as our purposes require.

\subsection{Coleman families} 
\label{subsec_Coleman_families_revisited}
Our approach to the proof of $p$-adic Gross--Zagier theorems applies equally well also for eigenforms of positive slope, at the expense of working with non-ordinary components (i.e. Coleman families) of the eigencurve. In this subsection, we briefly review the basic definitions and properties of such families. We remark that, if one restricts a Hida family $\h$ as above to a wide open disc $U_\h$ as in \S\ref{subsubsec_213_2025_12_09}, the discussion in \S\ref{subsec_Coleman_families_revisited} recovers that in \S\ref{subsec_Hida_families} away from the neighbourhoods of non-\'etale points on the eigencurve. 

\subsubsection{}
\label{subsubsec_221_2025_12_13_1225}
For each  $r=p^{v}<p^{\frac{p-1}{p-2}}$ with $v=\ord_p(e)$ for some $e\in E$, we let $B(r)\subset \cW$ denote the closed affinoid disc about $k$ of radius $r$. Let 
$$\mathscr{A}(r)\simeq E\left\langle\left\langle \frac{X-k}{e}\right\rangle\right\rangle:= \left\{\sum_{n=0}^\infty c_n\left(\frac{X-k}{e}\right)^n\in E\left[\left[\frac{X-k}{e}\right]\right]:\lim_{n\rightarrow\infty}c_n=0\right\}
\subset \LL_{k,r}[1/p]$$ 
denote the Noetherian ring of $E$-valued analytic functions on $B(r)$, where $\LL_{(k,r)}:=\cO_L\left[\left[\frac{X-k}{e}\right]\right]$.

\subsubsection{}  
\label{subsubsec_222_2025_12_14}
Let $h^\circ \in S_{k}(\Gamma_1(N^\circ),\varepsilon)$ be a newform of weight $k\geq 2$ and level $N^\circ$. Assume either that $\gcd(N^\circ,p)=1$, or else $p||N^\circ$ and $p \nmid {\rm cond}(\varepsilon)$. Let $h^\lambda=h$ be its $p$-stabilisation with $U_p$-eigenvalue $\lambda$ (if $N^\circ$ is divisible by $p$, then $h=h^\circ$ and ${\rm ord}_p(\lambda)=\frac{k-2}{2}$). Suppose also that $h$ is non-$\theta$-critical in the sense of Coleman (cf. \cite{bellaiche2012}, Definition 2.12).

For a sufficiently small number $r_0$ as above, there exists a unique Coleman family $\h$ (up to Galois conjugation) over the affinoid disc $B(r_0)$ of fixed slope $v(\lambda):=\ord_p(\lambda)$. Let us fix henceforth $r<r_0$ as above, and put $\LL_\h:=\LL_{(k,r)}[1/p]$.  We may then consider $\h$ as a family over the wide open disc $U_\h:={\rm Spm}(\LL_\h)$. That means, $\h$ admits a formal $q$-expansion 
$$\h=\sum_{n=1}^\infty \mathbb{a}_n(\h)q^n\,\in\, \LL_\h[[q]]\,.$$ 
Let $\bbalpha:=\mathbb{a}_p(\h)\in \LL_\h$ denote the eigenvalue with which $U_p$ acts on $\h$. We define the tame level of the Coleman family as $N_\h:=N^\circ/\gcd(N^\circ,p)$ and its tame nebentype $\varepsilon_\h:=\varepsilon$ (which has conductor dividing $N_\h$).

\subsubsection{}  We let $B(r)_{\rm cl}:=\mathbb{N}\cap B(r)$ denote the set of  classical points in the affinoid disc $B(r)$. An element $\kappa\in B(r)_{\rm cl}\subset U_\h (E)= {\rm Hom}(\LL_\h, E)$ is called a classical specialisation, and the corresponding ring homomorphism $\LL_\h\to E$ is denoted by ${\rm sp}_\kappa$. For such $\kappa$, let us fix a generator $P_\kappa\in \LL_\h$ of $\ker({\rm sp}_\kappa)$. We then have 
$$\h(\kappa):=\sum_{n=1}^\infty \mathbb{a}_n(\h)_{\vert_\kappa}\,q^n\in S_{\kappa+2}(\Gamma_1(N_\h)\cap \Gamma_0(p))\,.$$ 
We call $\h(\kappa)$ the specialisation of $\h$ of weight $\kappa$. On shrinking $B(r)$, one can ensure for $k\neq \kappa\in B^\circ(r)_{\rm cl}$ that the $p$-stabilised eigenform $\h(\kappa)$ is $p$-old and it arises as the $p$-stabilisation of a newform $\h(\kappa)^\circ \in S_\kappa(\Gamma_1(N_\h))$ with $U_p$-eigenvalue $\bbalpha(\kappa)=\mathbb{a}_p(\h)_{\vert_\kappa}$.

\subsubsection{}
\label{subsubsec_224_2025_12}
As explained in \cite[Theorem 4.6.6]{LZ0}, there exists a free $\LL_\h$-module $V_\h$ of rank $2$ equipped with a continuous $G_{\QQ,\Sigma}$-action (where, as before, $\Sigma$ is a finite set of places containing prime divisors of $pN^\circ$ and the archimedean place). We note that the ring $\LL_\h$ is denoted by $B_U$ in op. cit., whereas our $V_\h$ coincides with Loeffler and Zerbes's $M_U(\h)^*$. The specialisation morphism ${\rm sp}_\kappa$ induces isomorphisms 
$V_\h\otimes_{{\rm sp}_\kappa}E\simeq V_\h/P_\kappa V_\h \xrightarrow{\sim} V_{\h(\kappa)}$ of $G_{\QQ,\Sigma}$-modules, where $V_{\h(\kappa)}$ is Deligne's Galois representation attached to $\h(\kappa)$. 

\subsubsection{}
\label{subsubsec_213_2025_12_09_1556_bis}
As in \S\ref{subsubsec_213_2025_12_09_1556}, let us denote by $\h^c:=\h\otimes\varepsilon_\h^{-1}$ the dual Coleman family. Similar to $\eqref{eqn_2025_12_09_1114}$, we have a perfect $G_\QQ$-equivariant Poincar\'e duality pairing
\begin{equation}
    \label{eqn_2025_12_09_1603}
    V_\h \otimes_{\LL_\h} V_{\h^c}\xrightarrow{\,\langle\,\,,\,\,\rangle_\h\,} \bbchi_\h\chi_\cyc^\QQ\,,
    \end{equation}
cf. \cite[Theorem 4.6.6]{LZ1}. We remark here a subtle difference with the duality \eqref{eqn_2025_12_09_1114} for $p$-ordinary families: The pairings on Deligne's representations induced from the Poincar\'e duality do not interpolate integrally. In other words, the pairing \eqref{eqn_2025_12_09_1603} does \emph{not} restrict to a perfect pairing on the Galois stable $\LL_{(k,r)}$-lattices (which exist, cf. \cite{BL-GHC}, Remark 1.4).

Finally, we note that the pairing \eqref{eqn_2025_12_09_1603} also enjoys the properties \eqref{item_P1} (with $r=1$) and \eqref{item_P2}.

\subsubsection{Self-dual families of Galois representations}
In the situation of \S\ref{subsubsec_213_2025_12_09_1556} or \S\ref{subsubsec_213_2025_12_09_1556_bis}, suppose that the Dirichlet character $\varepsilon_\h$ admits a square-root. In that case, we set 
$$V_\h^\dagger:=V_\h \otimes \bbchi_\h^{-\frac{1}{2}}\varepsilon_\h^{-\frac{1}{2}}\,,$$ so that the Poincar\'e duality pairing $\langle\,,\,\rangle_\h$ induces a perfect pairing
$$V_\h^\dagger \otimes_{?_\h} V_\h^\dagger \lra \chi_\cyc^\QQ\,,\qquad ?=\mathbb{I},\LL\,.$$
In other words, $V_\h^\dagger$ is a self-dual family of Galois representations. We similarly define $V_h^\dagger$ and $V_{h^\circ}^\dagger$ for specialisations $h$ of $\h$.

\subsubsection{Hypotheses}
\label{subsubsec_2026_02_06_1123}
Let $K/\mathbb{Q}$ be an imaginary quadratic extension.  We shall consider the following hypotheses for $h$ and $\h$ as above, where the first two are denoted $\textbf{E}_1$ and $\textbf{E}_3$ in \cite{BDV}, respectively.
    \begin{itemize}
        \item[\mylabel{item_Reg}{\textbf{Reg}})] $k\geq 2 $ and $h$ is a non-critical $p$-regular eigenform.
        \item[\mylabel{item_Eis}{\textbf{Eis}})] $k = 1$ and $h$ is the $p$-stabilisation of a $p$-irregular Eisenstein series of weight $1$ and level $N_{\mathbf{h}}$.
       % \item[\mylabel{item_rank_1}{\textbf{Rank}})] ${\ord}_{s=\frac{k}{2}}L(h_{/K},s)=1$. 
        \item [\mylabel{item_Disc}{\textbf{Disc}})] $K\neq \QQ(\sqrt{-1}), \QQ(\sqrt{-3})$, and $\rm{disc}(K/\QQ)$ is odd.
        \item[\mylabel{item_Heeg}{\textbf{Heeg}})] All rational primes dividing $N_\h$ are split in $K$.
         \item[\mylabel{item_Sign}{\textbf{Sign}})] The global root number of the base change of $h$ to $K$ equals $-1$.
    \end{itemize}
Of course, \eqref{item_Heeg} implies \eqref{item_Sign}. We will also need the following ``big image'' condition for some of our results.
\begin{itemize}
\item[\mylabel{item_BI}{\textbf{BI}})] $\{M\in \GL_2(\Zp):\det(M)\in (\Zp^\times)^{k-1}\}\subset {\rm im}\left(G_{\QQ}\stackrel{\rho_h}{\longrightarrow} \Aut_{\cO_L}(T_h)\cong \GL_2(\cO_L)\right)$
\end{itemize}
\begin{remark}
\label{remark_regularity_mod_p_rep}
\normalfont
Suppose that the hypothesis \eqref{item_BI} holds. Then the residual representation $\overline{\rho}_h$ is full, in the sense that ${\rm im}(\overline{\rho}_h)$ contains a conjugate of ${\rm SL}_2(\FF_p)$. This implies that the projective image of $\overline{\rho}_h$ contains ${\rm PSL}_2(\FF_p)$ up to conjugation. As explained in \cite[Remark 2.28]{ContiLangMedvedovsky}, it follows that the regularity condition in \cite[Theorem 6.2]{ContiIT2016} for $\overline{\rho}_h$, as well as \cite[Corollary 7.2]{ContiIT2016} for its deformation to the Coleman family $\h$, hold true.   \hfill $\triangle$
\end{remark}

\section{Selmer complexes and height pairings}
\label{sec_selmer_complexes_heights}
In this section, we review the general theory of Selmer complexes (due to Nekov\'a\v{r} and Pottharst; cf. \cite{nekovar06, benoisheights}), following closely \cite[\S1.2]{BBMemoirs}. 

Let $F$ be an algebraic number field and let $S_p$ (resp., $S_\infty$) denote the set of primes of $F$ above $p$ (resp., archimedean places of $F$). Let us fix a finite set of places $S$ of $F$ containing $S_p$. We denote by $G_{F,S}$ the Galois group of the maximal algebraic extension of $F$ unramified outside $S\cup S_\infty$. Let $\cX = \Spm(\cO_{\cX})$ be an affinoid over $E$. We fix a continuous $G_{F,S}$-representation $V$ with coefficients in $\cO_{\cX}$. In applications,  $\cX$ will be chosen as $\Spm\, \mathscr{A}(r)$ as in \S\ref{subsubsec_221_2025_12_13_1225}. When $\cX=\Spm(E)$ is a singleton, we fix a $G_{F,S}$-stable $\cO_E$-lattice $T\subset V$.

\subsection{Unramified local conditions}
\label{subsec_1211_2025_07_06_1328}
Let $\lambda \in S\setminus S_p$ be a prime. Let $I_\lambda\subset G_{F_\lambda}$ denote the inertia subgroup and ${\rm Fr}_\lambda\in G_{F_\lambda}/I_\lambda$ the arithmetic Frobenius. The definition of Selmer complexes will require the following versions of unramified local conditions:

\subsubsection{}
\label{subsubsec311_2025_12_13_1552}
Nekov\'a\v{r}'s unramified local conditions are determined in terms of the complex
\[
C_{\ur}^\bullet (G_{F_\lambda}, V)= \left [ V^{I_\lambda}\xrightarrow{\Fr_\lambda-1} V^{I_\lambda}\right ],
\]
where the terms are concentrated in degrees $0$ and $1$. When $\cO_X=E$, we similarly define $C_{\ur}^\bullet (G_{F_\lambda}, T)$. We let $\RG_{\ur}(F_\lambda, V)$ (resp. $\RG_{\ur}(F_\lambda, T)$ when $\cO_\cX=E$) denote  the 
corresponding complex in the derived category $\mathscr D_{\textrm{ft}}(\cO_{\cX})$ (resp. in $\mathscr D_{\textrm{ft}}(\cO_E)$ if $\cO_\cX=E$). These complexes are equipped with a natural map 
\[
g_\lambda\,:\, C_{\ur}^\bullet (G_{F_\lambda}, M) \lra C^\bullet (G_{F_\lambda}, M)\,, \qquad M=V,\, T\,.
\]
We then have
\[
\bR^0\mathbf \Gamma_{\ur}(F_\lambda, M)\simeq H^0(F_\lambda,M), \qquad \bR^1\mathbf \Gamma_{\ur}(F_\lambda, M)\simeq
\ker \left (H^1(F_\lambda, M) \rightarrow H^1(I_\lambda,M) \right ).
\]
On the level of cohomology, the map $g_\lambda$ induces the isomorphism $\bR^0\mathbf \Gamma_{\ur}(F_\lambda, M)\simeq H^0(F_\lambda,M)$ and the natural inclusion $\bR^1\mathbf \Gamma_{\ur}(F_\lambda, M) \hookrightarrow H^1(F_\lambda,M)$. We refer the reader to \cite[\S7]{nekovar06} and \cite[\S1.2.1.1]{BBMemoirs} for the key properties of this complex that defines the unramified local conditions.

\begin{remark}
\label{rem_2025_12_14_0934}
\normalfont
As a matter of fact, Nekov\'a\v{r} in \cite[\S7]{nekovar06} defines Greenberg local conditions for a Galois representation $T$ over a complete local Noetherian $\ZZ_p$-algebra.  \hfill $\triangle$
\end{remark}

\subsubsection{} When $\cX=\Spm (E)$ is a singleton, these are supplemented by the Bloch--Kato conditions:
\begin{equation*}
H^1_{\rm f}(F_\lambda,M):= 
\begin{cases} \bR^1\mathbf \Gamma_{\ur}(F_\lambda, V), & \qquad\textrm{if $M=V$},\\
\mathrm{im} \left (H^1_{\rm f}(F_\lambda, V) \lra H^1(F_\lambda,V/T)\right ),  &\qquad \textrm{if $M=V/T$},\\
\textrm{$\ker\left(H^1(F_\lambda, T) \lra \dfrac{H^1(F_\lambda,V)}{H^1_{\rm f}(F_\lambda,V)}\right)$},  &\qquad \textrm{if $M=T$}.
\end{cases}
\end{equation*}
We remark that $H^1_{\rm f}(F_\lambda,M)$ need not agree with $\bR^1\mathbf \Gamma_{\ur}(F_\lambda, M)$ for $M=T$, $V/T$: the difference is accounted by Tamagawa numbers, cf. \cite[Lemma I.3.5(iii)]{rubin00} and \cite[\S7.6.9]{nekovar06}.

\subsection{Greenberg local conditions}
\label{subsec_32_2026_06_03}
We introduce local conditions at primes $\fp\in S_p$ that we shall work with. We put $\Gamma_\p^\cyc=\Gal(F_\p(\mu_{p^\infty})/F_\p)$ and fix a topologial generator $\gamma_\p^\cyc\in \Gamma_\p^\cyc$.

\subsubsection{Triangulations} We let $C^\bullet_{\varphi,\gamma^\cyc_\fp}(\DdagrigX (V))$ denote the Fontaine--Herr complex of the $(\varphi,\Gamma_\p^\cyc)$-module $\DdagrigX (V_{\vert_{G_{F_\fp}}})$ (over the relative Robba ring $\cR_\cX$) associated to the $G_{F_\p}$-representation $V$, given as in \cite[\S2.4]{benoisheights}. We note for the convenience of the reader that $K$ in op. cit. coincides with our $F_\fp$ and Benois writes $\gamma_{K}\in \Gamma_{K}$ in place of our $\gamma^\cyc_\fp\in \Gamma^\cyc_\fp$. Recall from \cite[\S2.5]{benoisheights} that there exists a complex $K^\bullet_\fp(V)$ of $\cO_{\cX}$-modules 
together with quasi-isomorphisms
\begin{equation}
\label{eqn_Fontaine_Herr_enhanced_2025_12}
C^{\bullet}(G_{F_\fp},V) \xrightarrow{\xi_\fp} K^\bullet_\fp(V)
\xleftarrow{\alpha_\fp}  C^\bullet_{\varphi,\gamma^\cyc_\fp}(\DdagrigX (V))\,.
\end{equation}

Let $\bD_\fp \subset \DdagrigX (V_{\vert_{G_{F_\fp}}})$ be a saturated $(\varphi,\Gamma^\cyc_\fp)$-submodule.  Such a choice of $\bD_\fp$ is called a triangulation, and it defines the local condition
\[
g_\fp\,:\, C^\bullet_{\varphi ,\gamma^\cyc_\fp}(\bD_\fp) \stackrel{\,\,\iota_{\bD_\fp}\,\,}{\lra} 
C^\bullet_{\varphi,\gamma^\cyc_\fp}(\DdagrigX (V))
\xrightarrow{\alpha_\fp} K^\bullet_\fp(V).
\]
On the level of cohomology, $g_\fp$ induces natural maps
\[
H^i(F_\fp,\bD_\fp) \lra H^i(F_\fp,\DdagrigX (V_{\vert_{G_{F_\fp}}}))\simeq H^i(F_\fp,V)\,.
\]

\subsubsection{\'Etale  triangulations} An important special case is the scenario when the $(\varphi,\Gamma_\p^\cyc)$-module $\bD_\fp= \DdagrigX (Z_\fp)$ for some $G_{F_\fp}$-submodule $Z_\fp\subset V_{\vert_{G_{F_\fp}}}$. These then recover the classical Greenberg local conditions, which are defined by the natural map
\[
g_\fp\,:\, C^\bullet (G_{F_\fp},Z_\fp) \lra C^\bullet (G_{F_\fp},V)\,.
\]
On the level of cohomology, $g_\fp$ induces a natural morphism
\[
H^i(F_\fp,Z_\fp) \lra H^i(F_\fp, V)\,.
\]
When $\cO_\cX=E$ and $T$ is as in \S\ref{subsubsec311_2025_12_13_1552}, one may equally consider Greenberg local conditions on $T$ associated with a $G_{F_\fp}$-submodule $Z_\fp\subset T$. Moreover, in this situation (as in the case of unramified local conditions), we define, following Nekov\'a\v{r} (cf. \cite{nekovar06}, \S7.8), Greenberg local conditions for a Galois representation $T$ with coefficients in a complete local Noetherian $\ZZ_p$-algebra.

\subsubsection{Bloch--Kato conditions at $S_p$.} 
Suppose $\cX=\Spm (E)$. Following Bloch and Kato, we put
\begin{equation*}
H^1_{\rm f}(\Qp,M):= 
\begin{cases} \ker \left (H^1(F_\fp, V) \lra H^1(F_\fp,V\otimes_{\Qp}\mathbf{B}_{\textrm{cris}})\right ), &\quad\textrm{if $M=V$},\\
\mathrm{im} \left (H^1_{\rm f}(F_\fp, V) \lra H^1(F_\fp,V/T)\right ),  &\quad\textrm{if $M=V/T$},\\
\textrm{$\ker\left(H^1(F_\fp, T) \lra \dfrac{H^1(F_\fp,V)}{H^1_{\rm f}(F_\fp,V)}\right)$},  &\quad\textrm{if $M=T$}.
\end{cases}
\end{equation*}
Here, $\mathbf{B}_{\rm cris}$ is Fontaine's crystalline period ring; cf. \cite[\S3]{Fontaine82BasottiTate}, \cite[\S2.4]{Fontaine94Asterisque}, and \cite[\S3.7]{blochkato}.

\subsection{Selmer complexes}
\label{subsec_selmer_complexes_2025_12}
In this paper, we will work with Selmer complexes with coefficients in $V$, defined by both \'etale (in the $p$-ordinary setting) and non-\'etale (in the non-$p$-ordinary case) Greenberg local conditions. We revisit their general constructions following  \cite{benoisheights}. 

\subsubsection{} As above, let $V$ be a $G_{F_\fp}$-representation over $\cO_\cX$. We write $\RG (V)$ (resp., $\RG (F_\lambda,V)$) for the complexes of continuous cochains of $G_{F,S}$ (resp. of $G_{F_\lambda}$) with coefficients in $V$. We let 
\begin{equation*}
\res_S:=(\res_\lambda)_{\lambda\in S} \,:\,C^\bullet (G_{F,S},V) \xrightarrow{(\res_\lambda)_{\lambda\in S}} \bigoplus_{\lambda\in S} C^\bullet (G_{F_\lambda},V)
\end{equation*}
denote the restriction map. For any $\lambda\in S\setminus S_p$, let us put  $K^\bullet_\lambda (V)=C^\bullet (G_{F_\lambda},V)$ and define $K^\bullet (V):=\underset{\lambda\in S}\bigoplus K^\bullet_\lambda (V)$. For $\lambda\in S$, we let $f_\lambda$  denote the map 
\[
f_\lambda=\begin{cases}
\res_\lambda &\text{if $\lambda\in S\setminus S_p$},\\
\xi_\p\circ \res_\p &\text{if $\lambda=\p\mid p,$}
\end{cases}
\]
where $\xi_\p$ is the map that appeared in \eqref{eqn_Fontaine_Herr_enhanced_2025_12}. We then have the following morphism of complexes:
\[
f_S=(f_\lambda)_{\lambda\in S} \,:\,C^\bullet (G_{F,S},V) \lra K^\bullet (V)\,.
\]

\subsubsection{} 
\label{subsubsec_1223_12_11}
Given triangulations $\bD_\p\subset \DdagrigX (V_{\vert_{G_{F_\fp}}})$ (\'etale or non-\'etale) for each $\fp\in S_p$, we put $\bD=\{\bD_\p\}_{\p\in S_p}$. We associate to $\bD$ the  collection $\left\{U_\lambda^\bullet(V,\bD), g_\lambda\right\}_{\lambda\in S}$ of local conditions given as follows: 
\begin{equation*}
U_\lambda^\bullet (V,\bD):= 
\begin{cases}
C^{\bullet}_{\textrm{ur}}(G_{F_\lambda}, V), &\quad\lambda\in S\setminus S_p\,,\\
C^\bullet_{\varphi,\gamma^\cyc_\fp}(\bD_\fp), &\lambda=\p\in S_p.
\end{cases}
\end{equation*}
Let us put $U^\bullet (V,\bD):=\bigoplus_{\lambda\in S} U^\bullet_\lambda (V,\bD)$ and $g_S:\sum_{\lambda\in S}g_\lambda$. 

The Selmer complex associated to local conditions $\left(U^\bullet (V,\bD), g_S\right)$ is defined as the mapping cone
\begin{equation*}
S^\bullet(V,\bD):=\mathrm{cone}
\left [C^\bullet (G_{F,S},V) \oplus U^\bullet (V,\bD)
\xrightarrow{f_S-g_S} K^{\bullet}(V) \right ] [-1].
\end{equation*}
We denote by  $\RG (V,\bD)= [S^\bullet (V,\bD)]$  the corresponding object in the category of bounded perfect complexes of $\cO_{\cX}$-modules (see \cite[\S3]{benoisheights} for details). We write $H^i(V,\bD)$ for the cohomology of $S^\bullet (V,\bD)$. Note that $H^i(V,\bD)=0$  for $i\geqslant 4$. 

\subsubsection{} 
In the setting of \S\ref{subsubsec_1223_12_11}, we have a distinguished triangle
\begin{equation}
\nonumber
K^\bullet (V) \lra S^\bullet (V,\bD)[1] \lra 
\bigl ( C^\bullet (G_{F,S},V)\oplus U^\bullet (V,\bD)\bigr ) [1]\lra 
K^\bullet (V)[1]\,,
\end{equation}
which induces the long exact sequence (cf. \cite{nekovar06}, \S0.8.0)
\begin{align}
\label{eqn_2025_12_13_1653}
\begin{aligned}
 H^0(F,V) \lra &  \underset{\p\in S_p} \bigoplus  H^0(F_\fp, \bD_\fp^-) \lra H^1(V,\bD) \\
 &\lra H^1(G_{F,S},V)\lra \underset{\lambda\in S\setminus S_p} \bigoplus \frac{H^1(F_\lambda, V)}{H^1_{\rm f}(F_\lambda, V)}\oplus \underset{\fp\in S_p} \bigoplus H^1(F_\fp, \widetilde{\bD}_\p) \,,
\end{aligned}
\end{align}
where $\widetilde{\bD}_\p:=\DdagrigX (V_{\vert_{G_{F_\fp}}})\big{/}\bD_\p$. One utilises the exact sequence \eqref{eqn_2025_12_13_1653} to compare the cohomology of Selmer complexes to their classical variants, namely the Bloch--Kato Selmer groups.

%%%%%%%%%%%%%%%%%%%%%%%%%%
%%%%%%%%%%%%%%%%%%%%%%%%%%
%%%%%%%%%%%%%%%%%%%%%%%%%%
%%%%%%%%%%%%%%%%%%%%%%%%%%

\subsection{Height pairings and Rubin's formula}
\label{subsec_2026_01_07_1351}
In this subsection, we review the construction of $p$-adic height pairings. We refer the reader to \cite{nekovar06,benoisheights} (as well as \cite{BBMemoirs}, \S1.3) for further details. 
\subsubsection{} 
\label{subsubsec_341_2025_12_13}
Let $V$ and $V^*$ be a pair of $G_{F,S}$ Galois representations with coefficients in $\cO_{\cX}$ as above, equipped with an $\cO_{\cX}$-bilinear pairing 
\begin{equation}
\label{eqn_2025_12_13_1715} 
\left (\,,\,\right )\,\,:\,\,V^*\times V\lra \cO_{\cX}\,.
\end{equation}
 The pairing \eqref{eqn_2025_12_13_1715} induces an $\cO_{\cX}$-linear pairing
\begin{equation}
\label{eqn_2025_12_13_1716} 
(\,,\,) \,\,:\,\,\DdagrigX (V^*_{\vert_{G_{F_\p}}}) \times \DdagrigX (V_{\vert_{G_{F_\p}}}) \lra \cO_{\cX}\,, \qquad \fp\in S_p.
\end{equation}
We say that $(\varphi, \Gamma^\cyc_\p)$-submodules $\bD_\fp \subseteq \DdagrigX (V_{\vert_{G_{F_\p}}})$ and 
$\bD^*_\fp \subseteq  \DdagrigX (V^*_{\vert_{G_{F_\p}}})$ are orthogonal if 
\[
(x,y)=0, \qquad \forall\,\,  x\in \bD'_\p,\quad\forall\,\,  y\in \bD_\p\,,
\]
and that $\bD=\{\bD_\fp\}_{\p\in S_p}$ is orthogonal to $\bD^*=\{\bD^*_\fp\}_{\p\in S_p}$ if $\bD_\fp$ and $\bD_\fp^*$ are so for each $\p$.

\subsubsection{}
\label{subsubsec_342_2025_12_14}
Suppose that we are in the scenario of Section~\ref{subsubsec_341_2025_12_13}, where $\bD$ is orthogonal to $\bD^*$. We then have the $p$-adic height pairing 
\begin{equation}
\label{eqn_defn_height_cyclo_2025_12_13}
h^\cyc_{\bD^*,\bD}\,:\,\RG (V^*(1), \bD^*(1)) \otimes_{\cO_\cX}^{\mathbf L}\RG (V,\bD) \lra \cO_\cX [-2]
\end{equation}
given as in \cite[\S1.3.1.3]{BBMemoirs}.  For the convenience of the reader, we briefly recall the definition of these $p$-adic height pairings.  

As in \S\ref{subsec_Iwasawa_Algebras}, let $\gamma_\cyc$ denote a topological generator of $\Gamma_\cyc^\circ$, and let $J=(\gamma_\cyc-1)\subset \LL(\Gamma_\cyc)$ denote the augmentation ideal. Let us denote by $\LL^\sharp(\Gamma_\cyc)$ the free $\LL(\Gamma_\cyc)$-module of rank one on which $G_{F,S}$ acts through the projection $G_{F,S}\twoheadrightarrow \Gamma_\cyc$. We consider the cyclotomic deformation $\mathbb{V}:=V\,\widehat\otimes_{\ZZ_p}\,\LL^\sharp(\Gamma_\cyc)$ of $V$ which we endow with the diagonal $G_{F,S}$ action. 

Consider the exact sequence 
\begin{equation}\label{eqn_deform_Wk}
0\lra V \xrightarrow{\times_\gamma} V_{\epsilon} \xrightarrow{\,{\rm pr}_0\,} V\lra 0
\end{equation}
where $\times_\gamma$ is multiplication by $(\gamma_\cyc-1)/\log\chi_\cyc(\gamma_\cyc)$, $V_{\epsilon}:=\mathbb{V}/J^2 \mathbb{V}$, and ${\rm pr}_0$ is the projection morphism\,. The exact sequence \eqref{eqn_deform_Wk} is independent of the choice of $\gamma_\cyc$ thanks to the normalised definition of the multiplication-by-$(\gamma_\cyc-1)$ map. We similarly put 
$$\bD_\epsilon:=\left\{\bD_\p\otimes \LL(\Gamma_\cyc)/J^2=:\bD_\epsilon^{(\p)}\right\}_{\p\in S_p}$$ 
and define the analogous objects for $V^*$ used in place of $V$. The exact sequence \eqref{eqn_deform_Wk}, together with the analogous sequence involving $\bD_\epsilon$, gives rise to a distinguished  triangle of Selmer complexes
\begin{equation}
\label{eqn_bocstein_sequence_cyclo} 
\RG (V,\bD)\lra \RG (V_\epsilon,\bD_\epsilon) \lra \RG (V,\bD)\xrightarrow{\beta_{\bD}^\cyc} \RG (V,\bD) [1] \,,
\end{equation}
see \cite[\S3.2]{benoisheights} for a detailed exposition. The morphism $\beta_{\bD}^{\cyc}$ is called the Bockstein morphism for the infinitesimal cyclotomic variation.

The $p$-adic height pairing \eqref{eqn_defn_height_cyclo_2025_12_13} associated to the data $\{(V,\bD),(V^*,\bD_\epsilon^*)\}$ is defined as the morphism
\begin{equation}
\label{eqn_defn_height_cyclo}
\RG (V^*(1), \bD^*(1)) \otimes_{\cO_\cW}^{\mathbf L}\RG (V,\bD) \xrightarrow{\beta_{\bD^*}^\cyc \otimes \id} 
 \RG (V^*(1), \bD^*(1))[1] \otimes_{\cO_\cX}^{\mathbf L}\RG (V,\bD)
\xrightarrow{\cup_{\bD^*,\bD}} \cO_\cX [-2],
\end{equation}
where $\cup_{\bD^*,\bD}$ is the global duality pairing (cf. \cite[\S1.2.5.3]{BBMemoirs}).

On the level of cohomology, \eqref{eqn_defn_height_cyclo_2025_12_13} induces an $\cO_\cX$-linear pairing 
\begin{equation}
\label{eqn_2025_12_13_1730}
\left < \,,\,\right>^{\cyc}_{\bD^*,\bD}\,:\, H^1 (V^*(1), \bD^*(1)) \otimes H^1(V, \bD) \lra \cO_\cX\,.
\end{equation}
When $\bD$ and $\bD^*$ are understood, we shall write $\left < \,,\,\right>$ in place of $\left < \,,\,\right>^{\cyc}_{\bD^*,\bD}$.

\subsubsection{}
\label{subsubsec_343_2025_12_14}
Suppose that $R$ is a complete local Noetherian $\ZZ_p$-algebra and $T$ (resp. $T^*$) is a free $R$ module equipped with a continuous $G_{F,S}$-action, as well as a perfect $G_{F,S}$-equivariant pairing $T\otimes T^*\to R$. Suppose also that, for each $\fp\in S_p$, we are given $G_{F_\p}$-stable submodules $Z_\p\subset T_{\vert_{G_{F_\p}}}$ (resp. $Z_\p^*\subset T^*_{\vert_{G_{F_\p}}}$).  With this data at hand (and combined with the unramified local conditions; cf. Remark~\ref{rem_2025_12_14_0934}), we define, following Nekov\'a\v{r} \cite[\S6]{nekovar06}, Greenberg local conditions $U^\bullet(T,\bD)$ (resp. $U^\bullet(T^*,\bD^*)$). Here, $\bD=\{Z_\fp\}_{\fp\in S_p}$ is the (\'etale) triangulation of $\{T_{G_{F_{\p}}}\}_{\p \in S_p}$ (similarly $\bD^*$). These in turn determine Selmer complexes 
$$\RG (T,\bD), \quad \RG (T^*(1),\bD^*(1))\in \mathscr{D}_{\rm ft}^{[0,3]}(R)\,.$$
Let us assume in addition that $\bD$ and $\bD^*$ are orthogonal with respect to the local cup product pairing (cf. \cite{nekovar06}, \S5.2). In that case, similar to \S\ref{subsubsec_342_2025_12_14}, we have the $p$-adic height pairing
\begin{equation}
\label{eqn_defn_height_cyclo_2025_12_14_R}
h^\cyc_{\bD^*,\bD}\,:\,\RG (T^*(1), \bD^*(1)) \otimes_{R}^{\mathbf L}\RG (T,\bD) \lra R [-2]
\end{equation}
given as in \cite[\S11]{nekovar06}. On the level of cohomology, \eqref{eqn_defn_height_cyclo_2025_12_14_R} induces an $R$-linear pairing 
\begin{equation}
\label{eqn_2025_12_14_1030_R}
\left < \,,\,\right>^{\cyc}_{\bD^*,\bD}\,:\, H^1 (T^*(1), \bD^*(1)) \otimes H^1(T, \bD) \lra R\,.
\end{equation}
As above, when $\bD$ and $\bD^*$ are understood, we shall write $\left < \,,\,\right>$ in place of $\left < \,,\,\right>^{\cyc}_{\bD^*,\bD}$.

\begin{remark}
    \label{remark_2026_07_01_1017} 
\normalfont
    Let $f$ be the newform we have fixed at the start of the paper. We recall our running no-exceptional-zero hypothesis: $a_p(f)\neq p^{\frac{k}{2}}$ if $p\mid N_f$.  As explained in \cite[\S11.3]{nekovar06} (especially, Theorem 11.3.9 in op. cit.; see also \cite{benoisheights}, Theorem 5.2.1 and Corollary 6.3.4), the $p$-adic height pairings we have introduced in \S\ref{subsubsec_342_2025_12_14} and \S\ref{subsubsec_343_2025_12_14} above coincide (up to sign) with earlier constructions of $p$-adic heights, due to Schneider~\cite{schneider82} (using universal norms) and Zarhin~\cite{Zarhin} (using splittings of the Hodge filtration), later generalised by Perrin-Riou~\cite{PerrinRiou92} and Nekov\'a\v{r}~\cite[\S4 \& \S6]{nekovar93},  on the Bloch--Kato Selmer group associated to the newform $f$. When $a_p(f)= p^{\frac{k}{2}}$, they do \emph{not} coincide;  see \cite[Theorem 11.4.6]{nekovar06}. \hfill $\triangle$

\end{remark}

\subsubsection{Rubin's formula} 
One may compute the height pairings in terms of local duality pairings, which will be a key ingredient in our leading term formulae. We review treatment of \cite{BBMemoirs} (see also \cite{nekovar06, kbbMTT, benoisbuyukboduk}), which are the generalisations of the works of Rubin~\cite{Rubin92, Rubin_RubinsFormula}. 

Suppose that $(V,\bD)$ and $(V^*,\bD^*)$ are as in \S\ref{subsubsec_342_2025_12_14}. For $(W,D)=(V,\bD)$ or $(V_\epsilon, \bD_\epsilon)$, let us put
$$ \widetilde{U}^\bullet(W,D)=\bigoplus_{\lambda\in S} \widetilde{U}^\bullet_\lambda(W,D):={\rm cone}[U^\bullet(W,D)\xrightarrow{-g_S}K^\bullet(W)]\,.$$ 
We then have the following exact triangle in the derived category:
\begin{equation}
\label{eqn_selmersequence_2025_12_14}
 \widetilde{U}^\bullet(W,D)[-1]\xrightarrow{\,\partial\,}\RG (W,D)\lra 
 {\RG(W)} \stackrel{\widetilde{\res}_S}{\lra}  \widetilde{U}^\bullet(W,D)\,,
\end{equation}
where $\RG(W)$ (resp. $\RG (W,D)$) denotes the image of the complex $C^\bullet(G_{F,S},W)$ (resp. the complex $S^\bullet(W,D)$ given as in \S\ref{subsec_selmer_complexes_2025_12}) in the derived category, cf. \cite[Section~6.1.3]{nekovar06}.  Moreover, for $\lambda \in S$, we have the tautological exact triangle
\begin{equation}\label{eqn:localsequence}
{U}_\lambda^\bullet (W,D)\lra K_\lambda^\bullet(W)\lra \widetilde{U}^\bullet_\lambda(W,D)\lra {U}_\lambda^\bullet(W,D)[1]\,.
\end{equation}

Let us set $\widetilde{D}_\p:= \DdagrigE (Z_{\vert_{G_{F_\p}}})/D_\p$. Recall that  $\RG(F_\p,D_\p)$ (resp. $\RG(F_\p,\widetilde{D}_\p)$) denotes the class of the complex $C_{\varphi,\gamma^\cyc_\fp}^{\bullet}(D_\p)$ (resp., $C_{\varphi,\gamma^\cyc_\fp}^{\bullet}(\widetilde{D}_\p)$) in the corresponding derived category. We have a distinguished triangle
\begin{equation}
\label{eqn:localsequencetilde1}
\RG(F_\fp,D_\fp)\lra \RG(F_\fp,\DdagrigE (Z_{\vert_{G_{F_\fp}}}))\stackrel{\mathfrak{s}}{\lra} \RG(F_\fp,\widetilde{D}_\p)\lra\RG(F_\fp,D_\fp)[1]\,.
\end{equation}
The quasi-isomorphism $\alpha$ in \eqref{eqn_Fontaine_Herr_enhanced_2025_12} together with \eqref{eqn:localsequence} and \eqref{eqn:localsequencetilde1} induce a functorial quasi-isomorphism 
\begin{equation}
\label{eqn_identifyingsungularquotients_2025_12}
\RG(F_\fp,\widetilde{D}_\p)\stackrel{\rm qis}{\lra}  
\widetilde{U}_\p^{ \bullet}(Z_{\vert_{G_{F_\p}}}, D_\p)\,.\end{equation}

\begin{theorem}[Rubin's formula]
\label{thm_RSformula_cyclo_height_2025_12_14}
Suppose $[z_{\rm f}]=[(z,(z_\lambda^+)_{\lambda \in S},(\mu_\lambda)_{\lambda \in S})] \in H^1 (V^*(1), \bD^*(1))$ has the property that 
$$\exists\,[\widetilde{z}]\in H^1(G_{F,S},V^*_\epsilon(1))\,:\qquad  {\rm pr}_0\left([\widetilde{z}]\right)=[z] \in   H^1(G_{F,S}, V^*(1))\,$$
\item[i)] There exists a class $(``$Bockstein normalised cyclotomic derivative of $[\widetilde{z}]")$ 
$$[D_\cyc\widetilde{z}]=\left([D_\cyc\widetilde{z}]_{\lambda}\right)_{\lambda \in S}\,\in \,\bigoplus_{\lambda\in S} H^1(\widetilde{U}_\lambda^{ \bullet}(V^*(1), { \bD^*(1)})=: H^1(\widetilde{U}(V^*(1),  \bD^*(1)))$$
 such that  
 \begin{equation}
\label{eqn_2025_12_14_1203}     
\times_\gamma\,([D_\cyc\widetilde{z}])=\widetilde{\res}_S\left([\widetilde{z}]\right)\in H^1(\widetilde{U}(V_\epsilon^*(1),  \bD_\epsilon^*(1)))\,,\qquad \beta^1([z_{\rm f}])=-\partial([D_\cyc\widetilde{z}])\,,
 \end{equation}
where the maps $\partial$ and $\widetilde{\res}_S$ are as in \eqref{eqn_selmersequence_2025_12_14}, and 
$$H^1(V^*(1),D^*(1))\xrightarrow{\beta^1}H^2(V^*(1),D^*(1))$$ 
is the Bockstein morphism $($cf. \cite{BBMemoirs}, \S1.3.1.2$)$. If $H^0(\widetilde{U}(V^*(1),  \bD^*(1)))=\{0\}$, then $[D_\cyc\widetilde{z}]$ is uniquely determined by the first identity in \eqref{eqn_2025_12_14_1203}.
\item[ii)] Let $D_\cyc\widetilde{z}\in \widetilde{U}^1(V^*(1),\bD^*(1))$ be any cocycle representing $[D_\cyc\widetilde{z}]$ and let $[y_{\rm f}]=[(y,(y_\lambda^+)_{\lambda \in S},(\nu_\lambda)_{\lambda \in S})] \in  H^1(V, \bD)$ be any class. Then,
\begin{align}
\label{eqn_2026_05_26_1905}
\langle [z_{\rm f}], [y_{\rm f}]\rangle&=-\sum_{\lambda \in S}\textup{inv}_\lambda\left((D_\cyc\widetilde{z})_\lambda\cup g_\lambda(y_\lambda^+)\right)\,.
\end{align}
\end{theorem}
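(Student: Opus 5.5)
The plan is a diagram chase on the Bockstein triangle \eqref{eqn_bocstein_sequence_cyclo} for $V^*(1)$, compared with the restriction triangle \eqref{eqn_selmersequence_2025_12_14} for both $(V^*(1),\bD^*(1))$ and its infinitesimal deformation $(V^*_\epsilon(1),\bD^*_\epsilon(1))$. I would follow the argument of \cite[\S1.3]{BBMemoirs} and Nekov\'a\v{r} \cite[\S11]{nekovar06}.

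For (i), start from the lift $[\widetilde z]$ of $z$ to $H^1(G_{F,S},V^*_\epsilon(1))$ and consider $\widetilde{\res}_S([\widetilde z])\in H^1(\widetilde U(V^*_\epsilon(1),\bD^*_\epsilon(1)))$. The short exact sequence \eqref{eqn_deform_Wk}, tensored with the local conditions, gives a short exact sequence of complexes $\widetilde U^\bullet(V^*(1),\bD^*(1))\xrightarrow{\times_\gamma}\widetilde U^\bullet(V^*_\epsilon(1),\bD^*_\epsilon(1))\xrightarrow{{\rm pr}_0}\widetilde U^\bullet(V^*(1),\bD^*(1))$. Since $z$ is the global component of a Selmer class $[z_{\rm f}]$, the element ${\rm pr}_0\widetilde{\res}_S([\widetilde z])=\widetilde{\res}_S([z])$ vanishes. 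The local data $(z^+_\lambda,\mu_\lambda)$ supply exactly a null-homotopy of it. Exactness of the associated long cohomology sequence then produces $[D_\cyc\widetilde z]$ with $\times_\gamma([D_\cyc\widetilde z])=\widetilde{\res}_S([\widetilde z])$. The preceding term in this sequence is $H^0(\widetilde U(V^*(1),\bD^*(1)))$ via the connecting map, which yields the uniqueness claim when that group is zero.

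For the identity $\beta^1([z_{\rm f}])=-\partial([D_\cyc\widetilde z])$, I would write down an explicit lift of $z_{\rm f}$ to the Selmer complex $S^\bullet(V^*_\epsilon(1),\bD^*_\epsilon(1))$. Its components are $\widetilde z$, lifts of $z^+_\lambda$ to $\bD^*_\epsilon$ (available since $\bD_\epsilon$ is free over $\LL/J^2$), and a corrected $\mu_\lambda$. The coboundary of this lift equals $\times_\gamma$ applied to a cocycle concentrated in the local cone components, and that cocycle represents $D_\cyc\widetilde z$. By definition of the connecting map this gives $\beta^1$. The sign comes from the convention for cones in \eqref{eqn_selmersequence_2025_12_14}, where $\partial$ involves $-g_S$.

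For (ii), use the definition \eqref{eqn_defn_height_cyclo}: $\langle[z_{\rm f}],[y_{\rm f}]\rangle=\beta^1([z_{\rm f}])\cup[y_{\rm f}]$, which by (i) equals $-\partial([D_\cyc\widetilde z])\cup[y_{\rm f}]$. Next apply the standard adjunction of Nekov\'a\v{r} \cite[\S6.3]{nekovar06} between $\partial$ and the restriction to local conditions under global duality. The cup product of $\partial(a)$ with a Selmer class equals the sum of local invariants of $a\cup g_\lambda(y^+_\lambda)$, since the global part contributes zero by the reciprocity law $\sum_\lambda{\rm inv}_\lambda=0$. At $\lambda\in S_p$, one needs the identification \eqref{eqn_identifyingsungularquotients_2025_12} together with orthogonality of $\bD$ and $\bD^*$, so that the pairing of $\widetilde{\bD}^*_\p(1)$-cohomology with $\bD_\p$-cohomology is well defined. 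This makes the formula independent of the cocycle representative.

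I expect the main obstacle to be the sign and homotopy bookkeeping in the cone constructions, especially making the explicit lift compatible with the Fontaine--Herr quasi-isomorphisms $\xi_\p,\alpha_\p$ in the non-\'etale case. I would handle this by working entirely at the level of $(\varphi,\Gamma)$-module complexes at $p$, following Benois \cite[\S3]{benoisheights}.
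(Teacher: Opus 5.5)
Your proposal is correct and is essentially the argument behind the paper's proof, which simply invokes \cite[Proposition 11.3.15]{nekovar06} (when $H^0(\widetilde{U}(V^*(1),\bD^*(1)))=\{0\}$) and \cite[Theorem 1.15]{BBMemoirs} in general. Those references carry out the same two steps you describe: an explicit lift to compute the Bockstein connecting map, and the adjunction $\partial(a)\cup[y_{\rm f}]=\sum_{\lambda}{\rm inv}_\lambda(a_\lambda\cup g_\lambda(y^+_\lambda))$, which relies on orthogonality of $\bD$ and $\bD^*$. One small point: when $H^0(\widetilde{U}(V^*(1),\bD^*(1)))\neq\{0\}$, you must take $[D_\cyc\widetilde z]$ to be the class produced by your explicit lift, not an arbitrary preimage from the long exact sequence. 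Two preimages differ by some $\delta(h)$, and $\partial(\delta(h))=\pm\beta^1(\partial(h))$ need not vanish.
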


\begin{proof}
    When $H^0(\widetilde{U}(V^*(1),\bD^*(1)))=\{0\}$, this is \cite[Proposition 11.3.15]{nekovar06}; for the general case, both parts are \cite[Theorem 1.15]{BBMemoirs}. See also \cite[Proposition A.2]{kbbMTT} and \cite[Theorem 4.13]{benoisbuyukboduk} for similar statements at various levels of generality.
\end{proof}

\begin{corollary}
    \label{cor_thm_RSformula_cyclo_height_2025_12_14}
    In the situation of Theorem~\ref{thm_RSformula_cyclo_height_2025_12_14}, assume that $H^1_{\rm f}(F_\lambda,V_\kappa)=\{0\}$ for all $\lambda\not\in S_p$ and a dense set of specialisations $\kappa\in \cX$. Then,
$$\langle [z_{\rm f}], [y_{\rm f}]\rangle=-\left<\mathfrak{d}_\cyc\left[\widetilde{z}\right], \res_p([y])\right>_{\rm Tate}\,,$$ 
where $\left<\,,\,\right>_{\rm Tate}$ denotes the local cup-product $($Tate$)$ pairing, and 
\begin{equation*}
\mathfrak{d}_\cyc\left[\widetilde{z}\right]=\left(\mathfrak{d}_\cyc\left[\widetilde{z}\right]\right)_{\p\in S_p} \in \bigoplus_{\p\in S_p} H^1(F_\p, \widetilde{\bD}^*_\fp(1))
\end{equation*}
is the image of $([{D}_\cyc\widetilde{z}]_\p)_{\p\in S_p}$ under the isomorphism 
$$\bigoplus_{\fp \in S_p} H^1(\widetilde{U}_\p(V^*(1),\bD^*(1)))\xrightarrow{\eqref{eqn_identifyingsungularquotients_2025_12}} \bigoplus_{\fp \in S_p} H^1(F_\fp,\widetilde {\bD}_\p^*(1)).$$
\end{corollary}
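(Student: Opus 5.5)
We deduce the corollary from Theorem~\ref{thm_RSformula_cyclo_height_2025_12_14}(ii) by showing that the terms in \eqref{eqn_2026_05_26_1905} indexed by $\lambda\in S\setminus S_p$ vanish. What survives is then the sum over $\p\in S_p$, which we rewrite as a Tate pairing via \eqref{eqn_identifyingsungularquotients_2025_12}.

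\textbf{Step 1: the terms with $\lambda\notin S_p$.} Here $g_\lambda(y_\lambda^+)$ lies in the image of $\bR^1\mathbf\Gamma_{\ur}(F_\lambda,V)\hookrightarrow H^1(F_\lambda,V)$. We first argue that this unramified cohomology vanishes.

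\begin{itemize}
\item At each specialisation $\kappa$ in the given dense set we have $\bR^1\mathbf\Gamma_{\ur}(F_\lambda,V_\kappa)=H^1_{\rm f}(F_\lambda,V_\kappa)=0$.
\item The complex $C^\bullet_{\ur}(G_{F_\lambda},V)=[V^{I_\lambda}\xrightarrow{\Fr_\lambda-1}V^{I_\lambda}]$ is perfect over $\cO_\cX$. Its $\bR^1$ is the cokernel of $\Fr_\lambda-1$, and by base change its formation surjects onto the fibrewise cokernels.
\item A finitely generated $\cO_\cX$-module whose fibres vanish on a dense set is zero on an open dense subset of $\cX$. In particular it is torsion.
\end{itemize}

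We would therefore expect to show $\bR^1\mathbf\Gamma_{\ur}(F_\lambda,V)$ is torsion, not literally zero. We then note that the pairing in \eqref{eqn_2026_05_26_1905} takes values in the torsion-free ring $\cO_\cX$, an integral domain in our applications where $\cX$ is a disc. Hence $\textup{inv}_\lambda\big((D_\cyc\widetilde z)_\lambda\cup g_\lambda(y_\lambda^+)\big)$ is a torsion element of $\cO_\cX$, so it is zero.

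Alternatively, the same reasoning applies pointwise: each term specialises to $0$ at every $\kappa$ in the dense set, and an analytic function vanishing on a dense set vanishes identically. This second argument is the one we would actually write, since it avoids worrying about base change for $H^1$ of a non-flat complex.

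\textbf{Step 2: the terms at $\p\in S_p$.} The class $[D_\cyc\widetilde z]_\p\in H^1(\widetilde U_\p^\bullet(V^*(1),\bD^*(1)))$ corresponds under \eqref{eqn_identifyingsungularquotients_2025_12} to $\mathfrak d_\cyc[\widetilde z]_\p\in H^1(F_\p,\widetilde\bD^*_\p(1))$. The element $g_\p(y_\p^+)$ is the image of a class in $H^1(F_\p,\bD_\p)$ in $H^1(F_\p,V)$, and this image is $\res_\p([y])$ because $[y_{\rm f}]$ is a Selmer class.

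By orthogonality of $\bD$ and $\bD^*$, the local pairing $\widetilde{\bD}^*_\p(1)\times\bD_\p\to\cR_\cX(1)$ is well defined. It is compatible with the Tate pairing, since the cup product on $\widetilde U_\p$ against $U_\p$ is defined through the cone construction of \cite[\S1.2.5]{BBMemoirs} and \cite[\S5]{nekovar06}. Thus
$$\textup{inv}_\p\big((D_\cyc\widetilde z)_\p\cup g_\p(y_\p^+)\big)=\big\langle\mathfrak d_\cyc[\widetilde z]_\p,\res_\p([y])\big\rangle_{\rm Tate}.$$
Summing over $\p\in S_p$ gives the formula.

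The main obstacle is this last compatibility: the comparison of the cone-level cup product with the Tate pairing under the quasi-isomorphisms $\alpha_\p$ and $\xi_\p$, including the sign conventions. We would cite \cite[Theorem 1.15 and \S1.2.5]{BBMemoirs} rather than reprove it.
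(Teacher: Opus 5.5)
Your proposal is correct and is essentially the paper's proof. The paper also disposes of the terms with $\lambda\notin S_p$ by noting that they vanish at the dense set of specialisations where $H^1_{\rm f}(F_\lambda,V_\kappa)=0$, hence vanish identically; this is your pointwise ``second argument''. It also takes the identification of the surviving terms at $\p\in S_p$ with the Tate pairing via \eqref{eqn_identifyingsungularquotients_2025_12} as implicit in Theorem~\ref{thm_RSformula_cyclo_height_2025_12_14}, as you do. You are right to prefer the pointwise version: the base-change step in your first version points the wrong way, since a surjection onto the fibrewise cokernels would not let you deduce that the fibres vanish from the vanishing of the cokernels of $\Fr_\lambda-1$ on $V_\kappa^{I_\lambda}$.
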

\begin{proof}
    Since $H^1_{\rm f}(F_\lambda,V_\kappa)=\{0\}$, the specialisation of the sum over $\lambda \in S$ in \eqref{eqn_2026_05_26_1905} collapses to a sum over $\lambda \in S_p$. By the density of such $\kappa$, the same conclusion also holds over $\cX$, as claimed.
\end{proof}

\begin{remark}
    \label{rem_2025_12_14_1222}
    \normalfont
    When $\mathfrak{X}=\Spm(E)$, the condition that $H^1_{\rm f}(F_\lambda,V)=\{0\}$ for $\lambda\not\in S_p$ is equivalent to the vanishing of $H^0(F_\lambda,V)$. Indeed, this is immediate by the rank-nullity theorem and the definition of the unramified local condition. In the more general case, assuming that $H^0(F_\lambda,V_\kappa)=\{0\}$ for a dense set of specialisations $\kappa$, it follows that the terms in the sum in \eqref{eqn_2026_05_26_1905} corresponding to $\lambda\nmid p$ vanish because their specialisations to a dense set of points do so by the discussion above. In our main applications, the required vanishing statement at classical specialisations of Coleman families is verified using the local-global compatibility in the Langlands correspondence for $\GL_2$ (cf. \cite{Car86}). \hfill $\triangle$
\end{remark}

\begin{corollary}
    \label{cor_thm_RSformula_cyclo_height_2025_12_15}
    In the situation of Corollary~\ref{cor_thm_RSformula_cyclo_height_2025_12_14}, assume in addition that 
    $$\res_\fq [\widetilde z] \in {\rm im}\left(H^1(F_\fq,\bD^{(\fq)}_\epsilon)\to H^1(F_\fq,V_\epsilon) \right)\,, \quad \forall\, \fq \in S_{\rm cris}\subset S_p\,.$$
    Then,
$$\langle [z_{\rm f}], [y_{\rm f}]\rangle=-\sum_{\p\in S_p\setminus S_{\rm cris}}\left<\mathfrak{d}_\cyc^{(\p)}\left[\widetilde{z}\right], \res_\p([y])\right>_{\rm Tate}\,.$$ 
\end{corollary}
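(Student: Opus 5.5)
The plan is to start from Corollary~\ref{cor_thm_RSformula_cyclo_height_2025_12_14}, which already gives
$$\langle [z_{\rm f}], [y_{\rm f}]\rangle=-\sum_{\p\in S_p}\left<\mathfrak{d}^{(\p)}_\cyc\left[\widetilde{z}\right], \res_\p([y])\right>_{\rm Tate}\,.$$
It then suffices to show that the summand indexed by each $\fq\in S_{\rm cris}$ vanishes. Here $\mathfrak{d}^{(\fq)}_\cyc[\widetilde z]$ is the image of $[D_\cyc\widetilde z]_\fq\in H^1(\widetilde U_\fq^\bullet(V^*(1),\bD^*(1)))$ under the quasi-isomorphism \eqref{eqn_identifyingsungularquotients_2025_12}. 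By the first identity in \eqref{eqn_2025_12_14_1203}, its image under $\times_\gamma$ equals $\widetilde{\res}_\fq[\widetilde z]$, which is the image of $\res_\fq[\widetilde z]$ in $H^1(F_\fq,\widetilde{\bD}^{*,(\fq)}_\epsilon(1))$ (the singular quotient of $V^*_\epsilon(1)$ at $\fq$).

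The key step is as follows. The hypothesis says that $\res_\fq[\widetilde z]$ comes from $H^1(F_\fq,\bD^{*(\fq)}_\epsilon(1))$. By the distinguished triangle \eqref{eqn:localsequencetilde1}, applied to $V^*_\epsilon(1)$ with $D=\bD^*_\epsilon(1)$, its image under $\mathfrak s$ in $H^1(F_\fq,\widetilde{\bD}^{*,(\fq)}_\epsilon(1))$ is therefore zero. Hence $\times_\gamma([D_\cyc\widetilde z]_\fq)=0$.

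To conclude $[D_\cyc\widetilde z]_\fq=0$ from this, use the short exact sequence $0\to \widetilde{\bD}_\fq^*(1)\xrightarrow{\times_\gamma}\widetilde{\bD}^{*}_{\epsilon}(1)\to\widetilde{\bD}_\fq^*(1)\to0$, the local analogue of \eqref{eqn_deform_Wk}. Its long exact sequence shows that the kernel of $\times_\gamma$ on $H^1$ is the image of the connecting map from $H^0(F_\fq,\widetilde{\bD}^*_\fq(1))$. The main obstacle is precisely this $H^0$ term.

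I would handle it as follows. Replacing the cocycle $D_\cyc\widetilde z$ by a different representative of a class with the same $\times_\gamma$-image changes $[D_\cyc\widetilde z]_\fq$ by an element of this connecting image. Theorem~\ref{thm_RSformula_cyclo_height_2025_12_14}(ii) asserts that the formula holds for any choice satisfying \eqref{eqn_2025_12_14_1203}, so one may choose the lift compatibly with the hypothesis. Concretely, lift $\res_\fq\widetilde z$ to a cocycle in $C^\bullet_{\varphi,\gamma}(\bD^{*(\fq)}_\epsilon(1))$ and use that lift as the $\fq$-component $z^+_\fq$ of the Selmer datum over $V^*_\epsilon(1)$. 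Then the canonical construction of $D_\cyc\widetilde z$ in \cite[Theorem 1.15]{BBMemoirs} (via the cone) yields a $\fq$-component that is zero at the cochain level. If one prefers to avoid this, the statement can instead be proved under the harmless extra assumption $H^0(F_\fq,\widetilde{\bD}^*_\fq(1))=0$, which holds at dense specialisations by weight reasons, with the result then extended over $\cX$ by density as in the proof of Corollary~\ref{cor_thm_RSformula_cyclo_height_2025_12_14}.

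With $\mathfrak d^{(\fq)}_\cyc[\widetilde z]=0$ for every $\fq\in S_{\rm cris}$, the Tate pairings indexed by $S_{\rm cris}$ vanish, leaving exactly the sum over $\p\in S_p\setminus S_{\rm cris}$, as claimed.
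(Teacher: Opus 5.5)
Your reduction is the paper's: apply Corollary~\ref{cor_thm_RSformula_cyclo_height_2025_12_14} and drop the summands at $\fq\in S_{\rm cris}$ because $\mathfrak{d}^{(\fq)}_\cyc[\widetilde{z}]=0$, a vanishing the paper simply asserts. You correctly observe that the hypothesis only gives $\times_\gamma([D_\cyc\widetilde{z}]_\fq)=0$, and that the kernel is the Bockstein image of $H^0(F_\fq,\widetilde{\bD}^*_\fq(1))$. Your route (a), however, does not remove this obstruction.

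The pair $(z^+_\fq,\mu_\fq)$ is part of the fixed Selmer cocycle $z_{\rm f}$. The lifts used to build $D_\cyc\widetilde{z}$ (equivalently, to compute $\beta^1([z_{\rm f}])$) must reduce to it modulo $J$. Take a cocycle $w$ of $C^\bullet_{\varphi,\gamma^\cyc_\fq}(\bD^{*(\fq)}_\epsilon(1))$ lifting $\res_\fq\widetilde{z}$, with the cochain $m$ realising $f_\fq(\widetilde{z})\sim g_\fq(w)$. It reduces to some $(\bar w,\bar m)$, and $(z^+_\fq-\bar w,\mu_\fq-\bar m)$ is a $0$-cocycle of $\widetilde{U}^\bullet_\fq(V^*(1),\bD^*(1))$, i.e.\ a class $x\in H^0(F_\fq,\widetilde{\bD}^*_\fq(1))$. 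Running the construction with lifts of the actual datum gives $[D_\cyc\widetilde{z}]_\fq=\pm\beta(x)$, essentially $x\cup\log\chi_\cyc$. Using $w$ itself silently replaces $[z_{\rm f}]$ by $[z_{\rm f}]\mp\partial(x)$. Theorem~\ref{thm_RSformula_cyclo_height_2025_12_14}(ii) does not let you re-choose either: the second identity in \eqref{eqn_2025_12_14_1203} fixes $[D_\cyc\widetilde{z}]$ modulo $\ker\partial={\rm im}(\widetilde{\res}_S)$ only, not modulo the connecting image of $H^0$.

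This is not a mere technicality. Suppose $[z_{\rm f}]=\partial(x)$ with $0\neq x\in H^0(F_\fq,\widetilde{\bD}^*_\fq(1))$ and $\widetilde{z}=0$. The hypothesis then holds trivially, yet $[D_\cyc\widetilde{z}]\equiv\pm\beta(x)$ modulo ${\rm im}(\widetilde{\res}_S)$. So $\langle[z_{\rm f}],[y_{\rm f}]\rangle$ acquires the local term $\pm\textup{inv}_\fq(\beta(x)\cup g_\fq(y^+_\fq))$ --- the familiar exceptional-zero contribution --- which is nonzero in general and need not be absorbable by a global class. The statement therefore genuinely requires one of the following:
\begin{itemize}
\item $H^0(F_\fq,\widetilde{\bD}^*_\fq(1))=0$ for each $\fq\in S_{\rm cris}$; or
\item reading the hypothesis as a local lift at $\fq$ of the whole Selmer cocycle $(z,z^+_\fq,\mu_\fq)$ to $(V^*_\epsilon(1),\bD^*_\epsilon(1))$.
\end{itemize}
Your route (b) is thus the actual proof, not an optional variant. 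It is what the paper's one-line argument tacitly uses, and it holds in the application (Proposition~\ref{Rubin_formula}), where the coefficients include the anticyclotomic Iwasawa algebra. Drop route (a). State the $H^0$-vanishing as a hypothesis to be checked in the case at hand, rather than invoking weight reasons, which are unavailable in the abstract setting.
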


\begin{proof}
    When $S_{\rm cris}=\emptyset$, then this is a restatement of Corollary~\ref{cor_thm_RSformula_cyclo_height_2025_12_14}. The general case also follows from Corollary~\ref{cor_thm_RSformula_cyclo_height_2025_12_14}, on noting that 
    $\mathfrak{d}_\cyc^{(\fq)}\left[\widetilde{z}\right]=0$ whenever $\fq\in S_{\rm cris}$.
\end{proof}

%%%%%%%%%%%%%%%%%%%%%%%%%%
%%%%%%%%%%%%%%%%%%%%%%%%%%
%%%%%%%%%%%%%%%%%%%%%%%%%%
%%%%%%%%%%%%%%%%%%%%%%%%%%

\subsection{Examples: families of modular forms} 
We will be interested in the following triangulations of the Galois representations (cf. \S\ref{subsubsec_2025_12_14_1514} and \S\ref{subsubsec_224_2025_12}) attached to families of eigenforms.
\subsubsection{} 
\label{subsubsec_351_2025_12_14}
Let $\h$ be a Hida family as in \S\ref{subsec_Hida_families} for which the hypotheses \eqref{item_Irr} and \eqref{item_Dist} hold true. Thanks to Wiles (see also \cite{ohta00}), there exist a $G_{\Qp}$-stable free $\LL_\h$-direct summand 
$$\mathcal{F}^+V_{\h}\subset V_{\h}{}_{\vert_{G_{\Qp}}}$$ 
of rank one. Let us define the free $\LL_\h$-module $\mathcal{F}^-V_{\h}:=V_{\h}/\mathcal{F}^+V_{\h}$ of rank one. 

The $G_{\QQ_p}$-modules $\cF^\pm V_{\h}$ can be explicitly described as follows. Let $\widetilde\alpha_\h: G_{\QQ_p}\to \cR_\hh^\times$ be the unramified character given by $\widetilde\alpha_\hh(\Fr_p)=a_p(\hh)$, and put $\alpha_\h:=\bbchi_{\h}\, \chi_\cyc\, \widetilde\alpha_\hh^{-1}\,\varepsilon_\hh$. Then the $G_{\QQ_p}$-action on $\cF^+V_{\h}$ (resp. $ \cF^-V_{\h}$) is given by $\alpha_\h$ (resp. $\widetilde\alpha_\h$). 

\subsubsection{}
\label{subsubsec_352_2025_12_17}
Assume now that $\h$ is a Coleman family as in \S\ref{subsec_Coleman_families_revisited}. The main results of \cite{liu-CMH} equip one with a saturated triangulation
\begin{equation}
\label{eqn_filtration_2025_12_14_1551}
0\lra \DD^+_\h \lra \DdagrigX(V_\h) \lra \DD^-_\h\lra 0
\end{equation}
of $(\vp,\Gamma_\cyc)$-modules (where we are slightly abusing notation in identifying $\Gamma_\cyc$ with $\Gamma^\cyc_\p$ from \S\ref{subsec_32_2026_06_03}) over the relative Robba ring $\RR_{\cX}$, where $\cX=\Spm\,\sA(r_0)$ for sufficiently small $r_0$. The $(\vp,\Gamma_\cyc)$-modules $\DD^+_\h\simeq\cR_\cX(\delta_\h)$ and $\DD^-_\h\simeq\cR_\cX(\widetilde\delta_\h)$ of rank one (where $\delta_\h,\widetilde{\delta}_\h:\QQ_p^\times\to \cO_\cX^\times$ are characters) can be described explicitly as follows:
$$\delta(u)=u^{\kappa(X)-1} \,,\quad \delta(p)=p^{k+1}a_p(\h)^{-1}\varepsilon_\h(p)\,,\qquad\qquad \widetilde\delta_\h(\ZZ_p^\times)=\,1,\quad\widetilde\delta_\h(p)=a_p(\h)\,,$$
where $\kappa(X):=k+\dfrac{\log(1+X)}{\log(1+p)}\in \cO_\cX$ is the unique function with $X=(1+p)^{\kappa(X)-k}-1$

\subsubsection{}
\label{subsubsec_2025_12_17_1139}
Let $\f$ and $\g$ be a pair of Hida families. For each $?=\f,\g$, we assume that we have a $G_{\QQ_p}$-stable $\LL_?$-direct summand $\cF^+T_?\subset T_?$. We consider the Greenberg local conditions associated to the (\'etale) triangulation $\bD_{\rm bal}$ given by the submodule
$$\cF^+_{\rm bal}\, T_\f\,\widehat\otimes\,T_\g:= \cF^+ T_\f\,\widehat\otimes\,T_\g+ T_\f\,\widehat\otimes\, \cF^+ T_\g\,.$$
We also put 
$$\cF^+_{\rm bal}\, T_\f\,\widehat\otimes\,T_\g\,\widehat\otimes\,\LL^\sharp(\Gamma_\cyc):=\left(\cF^+_{\rm bal}\, T_\f\,\widehat\otimes\,T_\g \right)\widehat\otimes\,\LL^\sharp(\Gamma_\cyc)\,.$$
We then define the balanced Selmer groups
$$H^1_{{\rm Iw},{\rm bal}}\left(\mathbb{Q}(\mu_{p^\infty}), T_\Bf\,\widehat\otimes T_\g\right):=H^1(T_\Bf\,\widehat\otimes T_\g\,\widehat\otimes\,\LL^\sharp(\Gamma_\cyc),\bD_{\rm bal})\,.$$
We similarly define $H^1_{{\rm Iw},{\rm bal}}\left(\mathbb{Q}(\mu_{p^\infty}), T_f\,\widehat\otimes T_g\right)$ for specialisations $f\times g$ of the family $\f \times \g$ of Rankin--Selberg products. 

Let us now assume that $\f$ and $\g$ are a pair of Coleman families, which come equipped with triangulations as in \S\ref{subsubsec_352_2025_12_17}. We then consider the saturated $(\varphi,\Gamma_\cyc)$-submodule
$${\bD}_{\bal}:=\bD_\f\,\widehat\otimes\,\Ddagrigg (V_\g) + \Ddagrigf(V_\f)\,\widehat\otimes\,\bD_\g \,\subset\, \Ddagrigfg (V_\f\,\widehat{\otimes}\,V_\g)$$
of rank 3. With this data, we also define the Iwasawa theoretic balanced Selmer group
$$H^1_{{\rm Iw},{\rm bal}}\left(\mathbb{Q}(\mu_{p^\infty}), V_\Bf\,\widehat\otimes V_\g\right):=H^1(V_\Bf\,\widehat\otimes V_\g\,\widehat\otimes\,\cH^\sharp(\Gamma_\cyc),\bD_{\rm bal})\,.$$
We similarly define the balanced Selmer group for specialisations $f\times g$ of the family $\f \times \g$.

\subsubsection{}
\label{subsubsec_354_2026_02_06}
Suppose that $\f$ is as above (a primitive family of finite slope) such that $\varepsilon_\f=\mathds{1}$. The triangulations described in \S\ref{subsubsec_351_2025_12_14} and \S\ref{subsubsec_352_2025_12_17} together with the duality \eqref{eqn_2025_12_09_1114} and \eqref{eqn_2025_12_09_1603} give rise to a $p$-adic height pairing
\begin{equation}
    \label{eqn_2026_01_09_1351}
    \langle\,,\,\rangle\,: \, H^1_{\rm f}(G_{K,S}, V_\f^\dagger) \otimes H^1_{\rm f}(G_{K,S}, V_\f^\dagger) \lra \LL_\f\,,
\end{equation}
where $ H^1_{\rm f}(G_{K,S}, V_\f^\dagger):= H^1(V_\f^\dagger,\DD_\f^{\dagger+})$ is the cohomology of the Selmer complex associated with $V_\f^\dagger$ considered as a $G_K$-representation. It follows from our discussion in \S\ref{subsubsec_213_2025_12_09_1556} that this height pairing has the following interpolative properties. We describe these separately according to whether the specialisation is new or $p$-old. Let $\f_\kappa\in S_{w+2}(\Gamma_0(N_\f\, p))$ denote an $E$-valued specialisation of the family of $\f$. 

If $\f_\kappa$ is a newform, then it follows from \eqref{item_P1} that the following diagram commutes, where the $p$-adic height pairing $h_{\f_\kappa}$ on the second row is induced from the Poincar\'e duality pairing $\langle\,,\,\rangle_{Np^r}$ with $r=1$; cf. \eqref{eqn_2026_1_10_1453}.
\begin{equation}
    \label{eqn_2026_01_09_1406}
    \begin{aligned}
         \xymatrix{
    H^1_{\rm f}(G_{K,S}, V_\f^\dagger) \otimes H^1_{\rm f}(G_{K,S}, V_\f^\dagger) \ar[rr]^-{\langle\,,\,\rangle}\ar[d]_{\kappa}&& \LL_\f\ar[d]^{\alpha(\kappa)^{-1}\lambda_{\f_\kappa}^{-1}\,\cdot\, \kappa}\\
    H^1_{\rm f}(G_{K,S}, V_{\f_\kappa}^\dagger) \otimes H^1_{\rm f}(G_{K,S}, V_{\f_\kappa}^\dagger) \ar[rr]_-{h_{\f_\kappa}}&& E
    }
    \end{aligned}
\end{equation}

If, on the other hand, $\f_\kappa$ is $p$-old and $f=\f_\kappa^\circ \in S_{w+2}(\Gamma_0(N_\f))$ is the associated newform, then it follows from \eqref{item_P2} that the following diagram commutes, where $\langle\,,\,\rangle_\kappa:=\kappa\,\circ\,\langle\,,\,\rangle$ and the $p$-adic height pairing $h_{\f_\kappa^\circ}$ on the second row is induced from the Poincar\'e duality pairing $\langle\,,\,\rangle_{N}$; cf. \eqref{eqn_2026_1_10_1453}.

\begin{equation}
    \label{eqn_2026_01_09_1534}
    \begin{aligned}
         \xymatrix@C=0.1cm{
    H^1_{\rm f}(G_{K,S}, V_{\f_\kappa}^\dagger) &\otimes& H^1_{\rm f}(G_{K,S}, V_{\f_\kappa}^\dagger) \ar[rrrrrrr]^-{\langle\,,\,\rangle_\kappa} \ar[d]^-{{\rm pr}_*^{\alpha(\kappa)}} &&&&&&& E \ar@{=}[d] \\
    H^1_{\rm f}(G_{K,S}, V_{\f_\kappa^\circ}^\dagger)  \ar[u]^{U_p^{-1}W_{Np}\,\circ\, ({\rm pr}^{\alpha(\kappa)})^*}  &\otimes& H^1_{\rm f}(G_{K,S}, V_{\f_\kappa^\circ}^\dagger) \ar[rrrrrrr]_-{h_{\f_\kappa^\circ}}  &&&&&&&E\,. 
    }
    \end{aligned}
\end{equation}
\subsubsection{}  
In this subsection, we denote by $\Gamma$ any one of the groups $\{\Gamma_{\rm cyc} ,  \Gamma_{\rm ac}, \Gamma_{\rm \fp} \}$. Let us also denote by $\Delta$ any one of the groups $\{\Delta_{\rm cyc} ,  \Delta_{\rm ac}, \Delta_{\rm \fp} \}$, so that $\Gamma\simeq \Delta\times \Gamma^\circ$. Let $\mathfrak{m}$ denote the maximal ideal of $\Lambda(\Gamma^\circ)$, and for each positive integer $n$, let us set $\mathcal{H}_n(\Gamma^\circ):=({\Lambda(\Gamma^\circ)}[\frac{\mathfrak{m}^n}{p}]^\wedge)[\frac{1}{p}]$, where $(\,\cdot\,)^\wedge$ means $p$-adic completion. As explained in \cite{pottharst}, these are affinoid algebras over $\QQ_p$. Moreover, we have that 
$$    \mathcal{H}(\Gamma^\circ) \simeq \varprojlim_n \mathcal{H}_n(\Gamma^\circ)\,,\qquad \mathcal{H}(\Gamma) \simeq \QQ_p[\Delta] \otimes_{\QQ_p}\mathcal{H}(\Gamma^\circ) \,. $$
As before, we write $\mathcal{H}_n^\sharp(\Gamma)$ for $\mathcal{H}_n(\Gamma)$ viewed as a $G_{\QQ_p}$-module with tautological action through $G_{\QQ_p} \rightarrow \Gamma \hookrightarrow \mathcal{H}_n(\Gamma)^\times$, and similarly define $\mathcal{H}_n^\iota(\Gamma)$ when the $G_{\QQ_p}$-action factors through the involution $\gamma \mapsto \gamma^{-1}$ on $\Gamma$.
For $? \in \{\sharp, \iota\}$ and positive integers $n$, we may consider (by \cite{KPX}, Theorem 2.2.17) the  $(\varphi,\Gamma_{\rm cyc})$-modules 
$$\bD^\dagger_{{\rm rig}, \mathcal{H}_n(\Gamma^\circ)}(\mathcal{H}_n^?(\Gamma))=: \bD^\dagger_{{\rm rig}}(\mathcal{H}_n^?(\Gamma))\,, $$
and similarly with $\Gamma$ replaced by $\Gamma^\circ$. %Finally, let us set $\mathcal{H}(\Gamma^\circ)^\square\mathcal{H}_n(\Gamma^\circ)^\square^\square\mathcal{H}_n(\Gamma)^\square$

If $D$ is a $(\varphi, \Gamma_{\rm cyc})$--module over $\mathcal{R}_A$, where $A$ is an affinoid algebra over a finite extension $E/\QQ_p$, then we define the Iwasawa cohomology groups 
\begin{align}
\begin{aligned}
\label{eqn_2026_01_20_1037}
    H^1\left(\QQ_{p}, D \hatotimes_{\QQ_p} \bD^\dagger_{\rm rig}(\mathcal{H}^?(\Gamma^\bullet))\right) &:= \varprojlim_{n} \,  H^1\left(\QQ_{p}, D \hatotimes_{\QQ_p} \bD^\dagger_{\rm rig}(\mathcal{H}^?_n(\Gamma^\bullet))\right)\,,\qquad \bullet=\circ,\{\,\}\,. %\\
    %H^1\left(\QQ_{p}, D \hatotimes_{\QQ_p} \bD^\dagger_{\rm rig}(\mathcal{H}^?(\Gamma))\right) &:= \varprojlim_{n} \,  H^1\left(\QQ_{p}, D \hatotimes_{\QQ_p} \bD^\dagger_{\rm rig}(\mathcal{H}^?_n(\Gamma))\right)\,. 
\end{aligned}    
\end{align}
We remark that we need to appeal to the description above of the Iwasawa cohomology groups in terms of (families) of $(\varphi,\Gamma_\cyc)$-modules only when the family $\f$ has positive slope. Otherwise, one can directly work with a $G_{\QQ_p}$-representation $X$ with coefficients in a complete local Noetherian $\ZZ_p$-algebra $R$, and define its Iwasawa cohomology $H^1(\QQ_{p}, X \hatotimes_{\ZZ_p} \LL^?(\Gamma^\bullet))$ as usual.

\subsubsection{} By \cite[Theorem 4.4.8]{KPX}, we have that 
$$H^1_{\rm Iw}(\QQ_p(\mu_{p^\infty}),D):=D^{\psi=1} \simeq H^1\left(\QQ_p, D\hatotimes_{\QQ_p} \bD^\dagger_{\rm rig}(\mathcal{H}^\iota(\Gamma_{\rm cyc}))\right)\,.$$
Moreover, when the $(\varphi,\Gamma_\cyc)$-module $D=\bD^\dagger_{\rm rig}(X)$ is \'etale, then this is consistent with the usual definition of the Iwasawa cohomology:
$$H^1_{\rm Iw}(\QQ_p(\mu_{p^\infty}),\bD^\dagger_{\rm rig}(X))\simeq H^1(\QQ_p,X\hatotimes \mathcal{H}^\iota(\Gamma_{\rm cyc})) \simeq H^1_{\rm Iw}(\QQ_p(\mu_{p^\infty}),X)\otimes_{\LL(\Gamma_{\rm cyc})} \mathcal{H}(\Gamma_{\rm cyc})\,.$$
\subsubsection{}
\label{subsubsec_356_2026_1_19_1714}
Let $\tau \in \widehat{\Delta}_\p$ (cf. \S \ref{subsec_Iwasawa_Algebras}). To make use of results from \cite{LLZ2,BDV, burungale2024zetaelementsellipticcurves}, we will assume that we are in one of the situations below.%: if $\tau$ is not primitive, then the primitive character $\tau'$ that it comes from,  satisfies one of:
\begin{enumerate}
    \item[\mylabel{item_Reg'}{\textbf{Reg}$'$})] Either $\tau$ is primitive, or $\tau$ is not primitive and comes from some character $\tau'$ satisfying $\tau'(\p) \neq \tau'(\p^c)$.
    \item[\mylabel{item_Eis'}{\textbf{Eis}$'$})]  $\tau = \mathds{1}$ is the trivial character mod $\fp$. 
\end{enumerate}
For the rest of this section, let $\Bf$ denote a Coleman or Hida family satisfying the condition \eqref{item_Reg}. Let $\g_\tau$ denote the CM Hida family associated to the tautological character
\[\Theta^\tau_\fp\,:\, \AA_K^\times \twoheadrightarrow \Gamma^\circ_{\fp} \hookrightarrow \Lambda_{\tau}(\Gamma^\circ_{\fp})^\times, \]
where the first arrow is the Artin reciprocity map composed with the natural projection from $G_K^{\rm ab} \twoheadrightarrow \Gamma^\circ_{\fp}$. If \eqref{item_Eis'} holds, we write $\Bg$ in place of $\Bg^{\mathds{1}}$, and we note that $\Bg$ satisfies \eqref{item_Eis}. Setting $\Lambda_{\g_\tau}$ to be the branch algebra of $\g_\tau$, we have the natural identification of rings
\begin{align}
    \label{equation_CM_wt_space}&\Lambda_{\g_\tau} \simeq \Lambda_\tau(\Gamma^\circ_{\fp})\simeq \Lambda(\Gamma^\circ_\fp), & \Lambda_{\rm wt} \simeq \Lambda((\Gamma^\circ_{\fp})^{p^{h_p}}), 
\end{align}
(for more details on CM Hida families, we refer the reader to \cite{Hida_Elementary} \S $7.6$ and \cite{burungale2024zetaelementsellipticcurves} \S $4$). 

When \eqref{item_Eis'} holds, we will make use of the results in \cite{burungale2024zetaelementsellipticcurves} and thus we will need to consider the Galois module $$V_\g^1 := V_\g/ (V_\g)_{\rm tors},$$ where $V_\g$ denotes the $\g$-isotypic part of $V(U_\g)$ as defined in \S \ref{subsubsection_413}. On the other hand, when \eqref{item_Reg'} holds, it suffices for us just to consider $V_{\g_\tau}$. To ease notation, we will write $V_{\g_\tau}$ in situations in which we are speaking of both cases simultaneously and leave it implicit that when \eqref{item_Eis'} holds, we actually mean $V_\g^1$.

\subsubsection{} Let us write \begin{align}
\rho:G_{\QQ_p} \cong G_{K_\fp}. \label{isom_Qp=Kp} \end{align} for the isomorphism determined by our fixed embedding. By Lemma $4.4$ and Theorem $5.19$ of \cite{burungale2024zetaelementsellipticcurves} when \eqref{item_Eis'} holds (see also \cite[Proposition 4.1]{BDV} after possibly shrinking $U_\g$), and by \cite[Corollary 5.2.6]{LLZ2} when \eqref{item_Reg'} holds, we have isomorphisms of $\Lambda(\Gamma_\fp^\circ)[G_{\QQ}]$-modules \begin{equation}
   V_{\g_\tau} \cong \rm{Ind}^\QQ_K(\Lambda_\tau^\sharp(\Gamma^\circ_\fp)), \label{equation_Induced_Rep_isom}
\end{equation} together with isomorphisms of $\Lambda(\Gamma_\fp^\circ)[G_{\QQ_p}]$-modules: 
\begin{align}\label{equation_identifying_F+-_with_Lambda_p}
    \mathcal{F}^+V_{\g_\tau} \simeq \Lambda_\tau^\sharp(\Gamma^\circ_{\fp})\,, \qquad \mathcal{F}^-V_{\g_\tau} \simeq \Lambda_\tau^{\sharp}(\Gamma^\circ_{\fp})^c. 
\end{align}
Here, $G_{\Qp}$ acts on $\Lambda_\tau^{\sharp}(\Gamma^\circ_\fp)$ via the isomorphism $\rho$ composed with $\tau\cdot\Psi_\fp$, where \[\Psi_\fp:G_K\twoheadrightarrow\Gamma_\fp^\circ\hookrightarrow\Lambda(\Gamma_\fp^\circ)^\times\]
is the tautological character. On the other hand, $G_{\Qp}$ acts on $\Lambda_\tau^{\sharp}(\Gamma^\circ_{\fp})^c$ by composing $\rho$ with $(\tau\cdot\Psi_{\fp})^c$, where $(\tau \cdot \Psi_\fp)^c$ is obtained from $\tau\cdot \Psi_\fp$ by pre-composing with conjugation by (any lift of) $c$. Of course, these isomorphisms depend on a choice of $\Lambda(\Gamma_\fp^\circ)[G_{\QQ_p}]$-basis for $\mathcal{F}^\pm V_{\g_\tau}$, which we fix in the next subsection. 

\subsubsection{} \label{subsubsec_Fixing_Basis} 
We explain how to determine a basis for $\mathcal{F}^\pm V_{\g_\tau}$. We first note that \cite[Proposition 10.1.1]{KLZ2} gives us an isomorphism 
\begin{align}
    &\omega_{\g_\tau}^\vee: \bD\left(\mathcal{F}^+V_{{\g_\tau}}(1- \Bk' -\varepsilon_{{\g_\tau}}) \right) \xrightarrow{\,\sim\,} \Lambda_{{\g_\tau}}, \notag 
\end{align}
where $\bD(M) = \left(M \hatotimes_{\ZZ_p} W(\overline{\FF}_p)\right)^{G_{\QQ_p}}$
for an unramified $G_{\QQ_p}$-module $M$.
\begin{remark} \normalfont
We note that our $\Lambda_{{\g_\tau}}$ corresponds to a branch of a Hida family. Moreover, when $\tau=\mathds{1}$, we have that $\varepsilon_{\Bg}{}_{\vert_{G_{\QQ_p}}}$ is trivial because $p$ splits in $K$. Let $\Bh$ denote a cuspidal Hida family. Then \cite[Theorem 9.5.2]{KLZ2} shows that $\omega_\Bh^\vee$ is compatible with the Faltings--Tsuji comparison isomorphism for specialisations of $\Bh$ at arithmetic points.  \hfill $\triangle$
\end{remark}
We have a functorial isomorphism of pro-$p$ abelian groups 
\begin{equation}
    \bD(\mathcal{F}^+V_{\g_\tau}(-1-\Bk'-\varepsilon_{\g_\tau})) \simeq \mathcal{F}^+V_{\g_\tau} \label{equatiom_Fukaya_Kato}
\end{equation} 
by \cite[Proposition 1.7.6]{Fukaya-Kato_Conj_of_Sharifi}, and we let $\lambda_{\g_\tau} \in \mathcal{F}^+V_{\g_\tau}$ denote the basis element corresponding to $1 \in \Lambda(\Gamma_\fp^\circ)$ under $\omega^\vee_{\g_\tau}$ and \eqref{equatiom_Fukaya_Kato}. Since $\mathcal{F}^- V_{\g_\tau} = (\mathcal{F}^+V_{\g_\tau})^c $, we also obtain a basis element $\lambda_{\g_\tau}^c$ of $\mathcal{F}^-V_{\g_\tau}$ by conjugating. These bases then determine the isomorphisms of \eqref{equation_identifying_F+-_with_Lambda_p}.

\subsubsection{} \label{Identifying_Iwasawa_Cohomologies_via_Covering}Let  $\{\rm{Spm}(\mathscr{O}_{\g_{\tau, n}})\}_n$ denote the affinoid cover of the generic fiber of the formal scheme $\rm{Spf}(\Lambda_{\Bg_\tau})$, corresponding to the affinoid algebras $\{\mathcal{H}_n(\Gamma^\circ_\fp)\}_n$ from the previous subsection under \eqref{equation_CM_wt_space}. For $? =\pm$, we set $$\DD^?_{\g_{\tau, n}} := \bD^\dagger_{\rm rig, \mathscr{O}_{\g_\tau, n}}(\mathcal{F}^? V_{\g_{\tau, n}}),$$ where $V_{\g_{\tau,n}}$ denotes the big Galois representation attached to $\g_{\tau,n}$ over the affinoid $\rm{Spm}(\mathscr{O}_{\g_{\tau,n}})$. Also, for any $(\varphi, \Gamma_{\rm cyc})$-module $D$ we define 
$$H^1(\QQ_p, D \hatotimes_{\QQ_p} \DD^?_{\Bg_\tau}) := \varprojlim_n \, H^1(\QQ_p, D \hatotimes_{\QQ_p} \DD^?_{\Bg_\tau, n})\,.$$
The isomorphisms \eqref{equation_identifying_F+-_with_Lambda_p} (determined by the choice of the basis of $\mathcal{F}^\pm$ as in \S\ref{subsubsec_Fixing_Basis}) induce the following isomorphisms: 
\begin{align}
    H^1\left(\QQ_{p}, \DD^{\dagger-}_{\Bf}\hatotimes \DD^+_{\g_\tau} \hatotimes \bD^\dagger_{\rm rig}(\mathcal{H}^\iota(\Gamma^\circ_{\rm cyc}))\right) & \simeq H^1\left(K_\fp, \DD^{\dagger-}_\Bf \hatotimes \bD^\dagger_{\rm rig}(\mathcal{H}^\sharp_{\tau}(\Gamma^\circ_\fp))\hatotimes \bD^\dagger_{\rm rig}(\mathcal{H}^\iota(\Gamma^\circ_{\rm cyc})) \right) \,,\label{+=fp}\\
    H^1\left(\QQ_{p}, \DD^{\dagger+}_\Bf \hatotimes \DD^-_{\g_\tau} \hatotimes \bD^\dagger_{\rm rig}(\mathcal{H}^\iota(\Gamma^\circ_{\rm cyc})) \right) & \simeq H^1\left(K_{\fp}, \DD^{\dagger+}_\Bf \hatotimes\bD^\dagger_{\rm rig}(\mathcal{H}^{\sharp}_{\tau}(\Gamma^\circ_\fp)^c)\hatotimes \bD^\dagger_{\rm rig}(\mathcal{H}^\iota(\Gamma^\circ_{\rm cyc})) \right)\,, \label{-=fp-}\\
    H^1\left(\QQ_{p}, \DD^{\dagger-}_\Bf \hatotimes \DD^-_{\g_\tau} \hatotimes \bD^\dagger_{\rm rig}(\mathcal{H}^\iota(\Gamma^\circ_{\rm cyc}))\right) & \simeq H^1\left(K_{\fp}, \DD^{\dagger-}_\Bf \hatotimes \bD^\dagger_{\rm rig}(\mathcal{H}^{\sharp}_{\tau}(\Gamma^\circ_\fp)^c)\hatotimes \bD^\dagger_{\rm rig}(\mathcal{H}^\iota(\Gamma^\circ_{\rm cyc})) \right). \label{--=fp--}
\end{align}

\subsubsection{} When the slope of the family $\f$ is zero, then the isomorphisms \eqref{+=fp}--\eqref{--=fp--} can be simplified (avoiding the passage to $(\varphi,\Gamma_\cyc)$-modules) and stated with integral coefficients:
\begin{align}
\label{eqn_2026_01_20_1204}
\begin{aligned}
    H^1\left(\QQ_{p}, \mathcal{F}^-V_{\f}^\dagger\hatotimes \mathcal{F}^+V_{\g_\tau} \hatotimes \LL^\iota(\Gamma^\circ_{\rm cyc})\right) & \simeq H^1\left(K_\fp, \mathcal{F}^-V_{\f}^\dagger \hatotimes \LL^\sharp_{\tau}(\Gamma^\circ_\fp)\hatotimes \LL^\iota(\Gamma^\circ_{\rm cyc}) \right) \,,\\
    H^1\left(\QQ_{p}, \mathcal{F}^+V_{\f}^\dagger \hatotimes \mathcal{F}^-V_{\g_\tau} \hatotimes \LL^\iota(\Gamma^\circ_{\rm cyc}) \right) & \simeq H^1\left(K_{\fp}, \mathcal{F}^+V_{\f}^\dagger \hatotimes \LL^{\sharp}_{\tau}(\Gamma^\circ_\fp)^c\hatotimes \LL^\iota(\Gamma^\circ_{\rm cyc}) \right)\,, \\
    H^1\left(\QQ_{p}, \mathcal{F}^-V_{\f}^\dagger \hatotimes \mathcal{F}^-V_{\g_\tau} \hatotimes \LL^\iota(\Gamma^\circ_{\rm cyc})\right) & \simeq H^1\left(K_{\fp}, \mathcal{F}^-V_{\f}^\dagger \hatotimes \LL^{\sharp}_{\tau}(\Gamma^\circ_\fp)^c\hatotimes \LL^\iota(\Gamma^\circ_{\rm cyc}) \right)\,. 
\end{aligned}
\end{align}

\subsubsection{}\label{subsubsec_432_2026_01_19}
Let us denote by $\mathbf{k}$ the universal cyclotomic character on the weight space of $\f$, and $\mathbf{k}/2$ its unique (since $p>2$) square root. We similarly define $\mathbf{k}'$ for $\g_{\tau}$ for each $\tau$. Let us set
    \begin{align}
    \begin{aligned}
        &\DD^+(V_\Bf^\dagger):= \DD_\Bf^{\dagger+}(-1-\Bk/2)^{\Gamma_{\rm cyc}=1}, &&\DD^-(V_\Bf^\dagger):= \DD_\Bf^{\dagger-}(\Bk/2)^{\Gamma_{\rm cyc}=1}\,,  \\
         &\DD^+(V_{\g_\tau}):= \varprojlim_n \,\DD_{\g_{\tau, n}}^{+}(-1-\Bk')^{\Gamma_{\rm cyc}=1}, &&\DD^-(V_{\g_\tau}):= \varprojlim_n \, (\DD_{\g_{\tau,n}}^{-})^{\Gamma_{\rm cyc}=1}\,, \label{equation_D^pm_g}
    \end{aligned}
    \end{align}
    where we put a non-trivial $\Gamma_{\rm cyc}$-action on $\DD^+(V_\Bf^\dagger)$, $\DD^-(V_\Bf^\dagger)$, and $\DD^+(V_{\g_\tau})$ via the respective characters $1+\mathbf{k}/2$, $-\mathbf{k}/2$, and $ 1+\mathbf{k}'$. We also set
    \begin{align}
        \DD^{+-}(V^\dagger_\Bf \hatotimes V_{\g_\tau}) &:= \DD^{+}(V^\dagger_\Bf) \hatotimes \DD^-(V_{\g_\tau}) \simeq \DD^{+}(V^\dagger_\Bf) \hatotimes \mathcal{H}(\Gamma_\fp^\circ), \notag \\
        \DD^{-+}(V^\dagger_\Bf \hatotimes V_{\g_\tau}) &:= \DD^{-}(V^\dagger_\Bf) \hatotimes \DD^+(V_{\g_\tau}) \simeq \DD^{-}(V^\dagger_\Bf) \hatotimes \mathcal{H}(\Gamma^\circ_\fp), \notag
    \end{align}
    where our choice of bases determines the final isomorphism in each case for $\mathcal{F}^\pm V_{\g_\tau}$.

\subsubsection{} For $\invques? \in \{+-, -+\}$, let 
\begin{equation}
    \mathcal{L}^{\invques ?}_n:H^1\left(\QQ_{p}, \DD_\Bf^{\invques,\dagger}\hatotimes \DD^?_{\g_{\tau, n}} \hatotimes \bD^\dagger_{\rm rig}(\mathcal{H}^\iota(\Gamma^\circ_{\rm cyc})) \right)\lra \DD^{\invques?}(V^\dagger_\Bf \hatotimes V_{\g_{\tau, n}}) \hatotimes \mathcal{H}(\Gamma^\circ_{\rm cyc}) \label{equation_Logarithm_Nakamura}
\end{equation}
denote the big Perrin--Riou logarithm induced by restricting the map from \cite[Theorem 7.1.4]{LZ1}, which itself was constructed for $(\varphi,\Gamma_\cyc)$-modules of rank one over the relative Robba ring $\cR_{\cA}$ (where $\cA$ denotes a reduced affinoid algebra) by extending the work of Nakamura~\cite{nakamura2014Jussieu} as in \cite[Equation 6.2.1]{LZ1}. We remark that in the statement of the theorem in \cite{LZ1}, $\QQ_{p,\infty} = \QQ_p(\mu_{p^\infty})$, whereas in the present paper $\QQ_{p,\infty} \subset \QQ_{p}(\mu_{p^\infty})$ denotes the $\ZZ_p$-subextension of $\QQ_{p,\infty}/\QQ_p$. As stated in Theorem 7.1.4 of op. cit., these maps are injective, and on passing to inverse limits and setting 
$$\DD^{\invques?}(V^\dagger_\Bf \hatotimes V_{\g_\tau}) := \varprojlim_n \DD^{\invques?}(V^\dagger_\Bf \hatotimes V_{\g_{\tau,n}})\,,$$ 
we obtain an injective map 
$$\mathcal{L}^{\invques?}: H^1\left(\QQ_{p}, \DD_\Bf^{\invques,\dagger}\hatotimes \DD^{?}_{\g_\tau} \hatotimes \bD^\dagger_{\rm rig}(\mathcal{H}^\iota(\Gamma^\circ_{\rm cyc})) \right)\lra \DD^{\invques?}(V^\dagger_\Bf \hatotimes V_{\g_\tau}) \hatotimes \mathcal{H}(\Gamma^\circ_{\rm cyc})\,. $$

\subsubsection{}
We recall Perrin-Riou and Nakamura's big logarithm maps relevant to our work.
\begin{defn} \label{defn_2_var_log} 
We let 
\begin{align}
    &\mathfrak{Log}^\Gamma_{-+}:H^1\left(K_\fp, \DD^{\dagger-}_\Bf \hatotimes \bD^\dagger_{\rm rig}(\mathcal{H}^\sharp_{\tau}(\Gamma^\circ_\fp))\hatotimes \bD^\dagger_{\rm rig}(\mathcal{H}^\iota(\Gamma^\circ_{\rm cyc})) \right)  \hookrightarrow \DD^{-}(V^\dagger_\Bf) \hatotimes \mathcal{H}(\Gamma^\circ_\fp) \hatotimes \mathcal{H}(\Gamma^\circ_{\rm cyc}), \label{equation_-+_Logarithm}\\
    &\mathfrak{Log}^\Gamma_{+-}:H^1\left(K_{\fp}, \DD^{\dagger+}_\Bf \hatotimes\bD^\dagger_{\rm rig}(\mathcal{H}^{\sharp}_{\tau}(\Gamma^\circ_\fp)^c)\hatotimes \bD^\dagger_{\rm rig}(\mathcal{H}^\iota(\Gamma^\circ_{\rm cyc})) \right)  \hookrightarrow \DD^+(V^\dagger_\Bf) \hatotimes \mathcal{H}(\Gamma^\circ_\fp) \hatotimes \mathcal{H}(\Gamma^\circ_{\rm cyc}), \label{equation_+-_Logarithm}
\end{align}  
denote the big logarithm maps obtained from \eqref{equation_Logarithm_Nakamura} via the isomorphisms \eqref{+=fp} and \eqref{-=fp-}. 
\end{defn}
We recall the decompositions \eqref{equation_Lambda(Gamma)_decomposition}, and let  $e_{\tau_\p}$ be the idempotent associated to $\tau_\p$ (where we now write $\tau_\p$ instead of $\tau$ to emphasise that it is an element of $\widehat{\Delta}_\p$). Let $e_{\tau_{\rm ac}}$ denote the idempotent of the character $\tau_{\ac}$ given as in Definition~\ref{definition_fp_to_ac}. We have the following short exact sequences:
\begin{align}
    &0 \lra  \mathcal{H}_{\tau_\p}^{\sharp}(\Gamma^\circ_\fp) \hatotimes \mathcal{H}^\iota(\Gamma^\circ_{\rm cyc})\xrightarrow{\,\times (\gamma_+ -1)\,} \mathcal{H}_{\tau_\p}^{\sharp}(\Gamma^\circ_\fp) \hatotimes \mathcal{H}^\iota(\Gamma^\circ_{\rm cyc}) \lra \mathcal{H}_{\tau_{\ac}}^{\iota} (\Gamma_{\rm ac}^\circ ) \lra 0, \label{eq_ses_gamma_gamma_gammaac_first} \\ 
    &0 \lra \mathcal{H}_{\tau_\p}^{\sharp}(\Gamma^\circ_\fp)^c \hatotimes \mathcal{H}^\iota(\Gamma^\circ_{\rm cyc})\xrightarrow{\,\times (\gamma_+ -1)\,}  \mathcal{H}_{\tau_\p}^{\sharp}(\Gamma^\circ_\fp)^c \hatotimes \mathcal{H}^\iota(\Gamma^\circ_{\rm cyc}) \lra \mathcal{H}_{\tau^{-1}_{\ac}}^{\iota} (\Gamma_{\rm ac}^\circ ) \lra 0, \label{eq_ses_gamma_gamma_gammaac}
\end{align}
Here, the third arrow in \eqref{eq_ses_gamma_gamma_gammaac_first} is given by 
 $$\mathcal{H}_{\tau_\p}^{\sharp}(\Gamma^\circ_\fp) \hatotimes \mathcal{H}^\iota(\Gamma^\circ_{\rm cyc}) \xrightarrow{\gamma_{\fp}\cdot e_{\tau_\p} \otimes 1 \mapsto \gamma^{-1}_{\fp}\cdot e_{\tau_\p} \otimes 1}  \mathcal{H}_{\tau_\p}^{\iota} (\Gamma^\circ_\fp) \hatotimes \mathcal{H}^\iota(\Gamma^\circ_{\rm cyc}) \xrightarrow{a \cdot e_{\tau_\p} \otimes b \mapsto(a \otimes b \, \rm{mod} \, (\gamma^+-1))\cdot e_{\tau_\ac}} \mathcal{H}_{\tau_\ac}^\iota(\Gamma^\circ_{\rm ac})\,,$$
whereas in \eqref{eq_ses_gamma_gamma_gammaac} by
   $$\mathcal{H}_{\tau_\p}^{\sharp}(\Gamma^\circ_\fp)^c \hatotimes \mathcal{H}^\iota(\Gamma^\circ_{\rm cyc}) \xrightarrow{\gamma_{\fp^c}\cdot e_{\tau_{\p}^c}\otimes 1 \mapsto \gamma^{-1}_{\fp^c}\cdot e_{\tau_{\p}^c}\otimes 1} \mathcal{H}_{\tau_\p}^{\iota} (\Gamma^\circ_\fp)^c \hatotimes \mathcal{H}^\iota(\Gamma^\circ_{\rm cyc}) \xrightarrow{a \cdot e_{\tau_\p^c} \otimes b \mapsto (a \otimes b \, \rm{mod} \, (\gamma^+-1))\cdot e_{\tau^{-1}_\ac}} \mathcal{H}_{\tau^{-1}_{\ac}}^\iota(\Gamma^\circ_{\rm ac})\,. $$
To ease notation, we henceforth write $\mathcal{H}_\tau(\Gamma^\circ_?)$ for $? \in \{ \p, \, {\ac}\}$, dropping the subscript on $\tau$ and leaving it implicit by the fact that it appears alongside $\Gamma_?^\circ$.
\begin{defn}
\label{def_2026_06_03_1051}
    We define the restriction of the logarithm maps to the anticyclotomic tower on setting
    \begin{align}
        &\mathfrak{Log}^{\rm ac}_{-+}\,:\, H^1\left(K_\fp, \DD^{\dagger-}_\Bf \hatotimes \bD^\dagger_{\rm rig}(\mathcal{H}^\sharp_{\tau}(\Gamma^\circ_\fp))\hatotimes \bD^\dagger_{\rm rig}(\mathcal{H}^\iota(\Gamma^\circ_{\rm cyc})) \right)/(\gamma_+-1) \hookrightarrow \DD^{\dagger-}_\Bf \hatotimes \mathcal{H}(\Gamma^\circ_{\rm ac}), \notag\\
        &\mathfrak{Log}^{\rm ac}_{+-}\,:\, H^1\left(K_{\fp}, \DD^{\dagger+}_\Bf \hatotimes\bD^\dagger_{\rm rig}(\mathcal{H}^{\sharp}_{\tau}(\Gamma^\circ_\fp)^c)\hatotimes \bD^\dagger_{\rm rig}(\mathcal{H}^\iota(\Gamma^\circ_{\rm cyc})) \right)/(\gamma_+-1) \hookrightarrow \DD^{\dagger+}_\Bf \hatotimes \mathcal{H}(\Gamma^\circ_{\rm ac}) \notag
    \end{align}
    to be the maps obtained by considering \eqref{equation_-+_Logarithm} and \eqref{equation_+-_Logarithm} modulo $\gamma_{+}-1$.
\end{defn}

\begin{remark} \label{remark_embedding_into_ac}
\normalfont
    From the long exact sequences induced by \eqref{eq_ses_gamma_gamma_gammaac_first} and \eqref{eq_ses_gamma_gamma_gammaac}, we have embeddings 
    \begin{align}
    \begin{aligned}
        &H^1\left(K_\fp, \DD^{\dagger-}_\Bf \hatotimes \bD^\dagger_{\rm rig}(\mathcal{H}^\sharp_{\tau}(\Gamma^\circ_\fp))\hatotimes \bD^\dagger_{\rm rig}(\mathcal{H}^\iota(\Gamma^\circ_{\rm cyc})) \right)/(\gamma_+-1) \hookrightarrow   H^1\left(K_{\fp}, \DD^{\dagger-}_\Bf \hatotimes \bD^{\dagger}_{\rm rig}(\mathcal{H}^\iota_\tau(\Gamma^\circ_{{\rm ac}}))\right) \notag\\
        &H^1\left(K_\fp, \DD^{\dagger+}_\Bf \hatotimes \bD^\dagger_{\rm rig}(\mathcal{H}^\sharp_{\tau}(\Gamma^\circ_\fp)^c)\hatotimes \bD^\dagger_{\rm rig}(\mathcal{H}^\iota(\Gamma^\circ_{\rm cyc})) \right)/(\gamma_+-1) \hookrightarrow   H^1\left(K_{\fp}, \DD^{\dagger+}_\Bf \hatotimes \bD^\dagger_{\rm rig}(\mathcal{H}^\iota_{\tau^{-1}}(\Gamma^\circ_{{\rm ac}}))\right). \label{embedding}
    \end{aligned}
    \end{align}
   Hence, we may view $\mathfrak{Log}^{\rm ac}_{\invques ?}$ as maps on these submodules of $H^1_{\rm{Iw}}\left(K_{\fp}, \DD^{\invques,\dagger}_\Bf \hatotimes \bD^\dagger_{\rm rig}(\mathcal{H}^\iota_\tau(\Gamma^\circ_{{\rm ac}}))\right)$ for $\invques= \pm$ and $\tau \in \widehat{\Delta}_\ac$. Moreover, by comparing interpolation properties, we see that $\mathfrak{Log^{\ac}_{+-}}$ readily agrees with the anticyclotomic logarithm map constructed in \cite{JLZ}, and hence, we consider it as a map on all of the module $H^1\left(K_{\fp}, \DD^{\dagger+}_\Bf \hatotimes \bD^\dagger_{\rm rig}(\mathcal{H}_\tau^\iota(\Gamma^\circ_{\rm ac})) \right)$.  \hfill $\triangle$
\end{remark}

\subsubsection{}
As a consequence of the overconvergent Eichler--Shimura isomorphism, we are equipped with canonical elements 
$$\omega_{\Bg_\tau}:= (\omega_{\g_{\tau,n}})_n \in \DD^{-}_{\Bg_\tau^c}(1+\mathbf{k}'-\varepsilon_{\Bg_\tau^c})^{\Gamma_{\rm cyc}=1} \quad \hbox{ and }\quad  \eta_{\Bg_\tau}:= (\eta_{\g_{\tau,n}})_n \in (\DD^+_{\Bg_\tau^c})^{\Gamma_{\rm cyc}=1}\,,$$ 
where the definition of these modules is similar to that of the modules $\DD^\pm(V_{\g_\tau})$ in \eqref{equation_D^pm_g}. We also obtain elements $ \omega_\Bf \in \DD^{\dagger-}_{\Bf^c}(\Bk/2 -\varepsilon_{\Bf^c}) ^{\Gamma_{\rm cyc}=1}$ and $\eta_\Bf \in \DD^{\dagger+}_{\Bf^c}(-1-\Bk/2)^{\Gamma_{\rm cyc}=1}$.
\begin{remark}
\normalfont
     Under the pairing 
    \[\langle\,,\,\rangle_\DD\,:\, \DD^+_{\g_{\tau,n}}(1-\Bk' - \varepsilon_{\g_{\tau,n}})^{\Gamma_{\rm cyc}=1} \times \DD^-_{\Bg^c_{\tau,n}}(1+\Bk' -\varepsilon_{\Bg^c_{\tau,n}})^{\Gamma_{\rm cyc}=1}\lra \mathscr{O}_{\g_{\tau,n}}\simeq \mathcal{H}_n(\Gamma^\circ_\fp) \]
    induced by Poincar\'e duality, $\langle \,\cdot\,, \omega_{\g_{\tau,n}}\rangle$ agrees with the map induced by $\omega_{\g_\tau}^\vee$.  \hfill $\triangle$
\end{remark}

\begin{defn} \label{definition_L_omega_and_L_eta}
    \item[i)] We define $\mathscr{L}^{\tau}_{\eta_\Bf}$ as the morphism
$$ \mathscr{L}^{\tau}_{\eta_\Bf} :   H^1\left(K_\fp, \DD^{\dagger-}_\Bf \hatotimes \bD^\dagger_{\rm rig}(\mathcal{H}^\sharp_{\tau}(\Gamma^\circ_\fp))\hatotimes \bD^\dagger_{\rm rig}(\mathcal{H}^\iota(\Gamma^\circ_{\rm cyc})) \right)/(\gamma_+-1) \xrightarrow{\langle\mathfrak{Log}^{\rm ac}_{-+} , \, \eta_\Bf \rangle_\DD} \Lambda_\Bf \hatotimes \mathcal{H}(\Gamma^\circ_{\rm ac})\,. $$
    \item[ii)] We put $\mathscr{L}^{\tau^c}_{\omega_\Bf}$ to denote the morphism 
   $$\mathscr{L}^{\tau^c}_{\omega_\Bf} :  H^1\left(K_{\fp}, \DD^{\dagger+}_\Bf \hatotimes \bD^\dagger_{\rm rig}(\mathcal{H}_{\tau^{-1}}^\iota(\Gamma^\circ_{\rm ac})) \right) \xrightarrow{\langle\mathfrak{Log}^{\rm ac}_{+-} , \, \omega_\Bf \rangle_\DD} \Lambda_\Bf \hatotimes \mathcal{H}(\Gamma^\circ_{\rm ac})\,.$$
\end{defn}
\section{A review of Beilinson--Flach elements}
Let $\Bf = \sum_{n\geq 1}a_n(\Bf)\, q^n$ (resp. $\Bg$) be a primitive Coleman family of tame level $N_{\Bf}$ (resp. $N_{\Bg}$) and tame nebentype characters $\varepsilon_\Bf$ (resp. $ \varepsilon_\Bg)$, cf. \S\ref{subsec_Coleman_families_revisited}.  We recall that these are parameterised by the connected affinoid discs $U_\Bf$ and $U_\Bg$, centred at classical weights $k+2$ and $k'+2$ in weight space, so that $a_n(\Bf)\in \LL_\f:=\cO(U_\f)$ (and similarly, $a_n(\Bg)\in \LL_\g$). Let $f$ and $g$ be newforms of respective weights and levels $(k+2, N_{\Bf} p^{r})$ and $(k'+2,N_{\Bg} p^{s})$ such that $r,s\in \{0,1\}$ and (a $p$-stabilisation of) $f$ and $g$ arise as an $E$-valued specialisation of $\Bf$ and $\Bg$, respectively (cf. \S \ref{subsubsection_414}). 
Shrinking $U_\Bf$ as necessary, we assume that $v_p(a_p(\Bf))=: \mathfrak{s}_\Bf\geq 0$ is constant on $U_\Bf$ (similarly for the family $\g$). 

In this section, we shall recall the ``big'' Beilinson--Flach element $_c{\rm{\bf BF}}(\Bf \otimes \Bg)$ and its interpolative properties; our exposition closely follows the proof of \cite[Proposition 2.3]{BDV}.

\subsection{} Let $j \in [0,\min(k,k')]$ be an integer. We consider the Rankin-Eisenstein class
\[{\rm Eis}_{{\textup{\'{e}t}},b,N}^{[k,k',j]} \in H^3_{{\textup{\'{e}t}}}\left(Y_1(N)^2, \mathscr{L}_{k}\boxtimes \mathscr{L}_{k'}(2-j)\right),\] 
defined in \cite[\S3.3.1]{KLZ2}, where the modular curves $Y(M,N)$ (for positive integers $M$ and $N$ where $M\mid N$) are as in op. cit, and finally, $\mathscr{L}_{w} = \rm{TSym}^w \mathscr{H}_{\QQ_p}$, where $\mathscr{H}_{\QQ_p}$ is the \'etale sheaf defined in \cite[\S2.3]{KLZ2}. 
\begin{defn}
    Let $r$ be a non-negative integer. We denote by
    \[{\rm Eis}(k,k',j)\in H^3_{{\textup{\'{e}t}}}\left(Y(p^r,Np^{r+1})^2, (\mathscr{L}_{k}\boxtimes \mathscr{L}_{k'}(2-j)\right)\] 
    the pull-back of ${\rm Eis}_{{\textup{\'{e}t}},1,Np^{r+1}}^{[k,k',j]}$ induced by the natural degeneracy map $Y(p^r,Np^{r+1})\rightarrow Y(Np^{r+1})$. 
\end{defn}
\subsubsection{} \label{subsubsection_412} 
Let $\Bh \in \{\Bf, \Bg\}$ and $w \geq \mathfrak{s}_{\h}$. As in the proof of \cite[Proposition 2.3]{BDV}, we let 
\[{V}(w)^{\leq \mathfrak{s}_{\h}}:= H^1_{\rm{\textup{\'{e}t}, par}}(Y_{\Bh}, \mathscr{L}_{w})^{\leq \mathfrak{s}_{\h}}\otimes_{\ZZ_p} E(1)\,,\] 
where $Y_{\Bh}=Y_1(N_{\Bh}p)_{\overline{\QQ}}$. The superscript $(\cdot)^{\leq \mathfrak{s}_{\h}}$ means the subspace of $(\cdot)$ on which the dual Hecke operator $U_p'$ acts with slope $\leq \mathfrak{s}_{\h}$. We also define 
\[\widetilde{V}(w)^{\leq \mathfrak{s}_{\h}}:= H^1_{\rm{\textup{\'{e}t}}}(Y_{\Bh}, \mathscr{L}_{w})^{\leq \mathfrak{s}_{\h}} \otimes_{\ZZ_p} E(1)\,.\] 
We define $\widetilde{\rm BF}_r(k,k',j)$ as be the image of ${\rm Eis}(k,k',j)$ under the composition of following morphisms: 
\begin{align*}
    &H^3_{{\textup{\'{e}t}}}\left(Y(p^r,Np^{r+1})^2, \mathscr{L}_{k}\boxtimes \mathscr{L}_{k'}(2-j)\right)  \\
    &\quad \xrightarrow{(t_r\times t_r)_*}H^3_{{\textup{\'{e}t}}}\left((Y_1(Np)\otimes\mathbb{Z}[\mu_{p^r}])^2, \mathscr{L}_{k}\boxtimes \mathscr{L}_{k'}(2-j)\right) \\
     &\quad \quad \longrightarrow H^1_{\textup{\'{e}t}}\left(\mathbb{Z}[\mu_{p^r}, \frac{1}{Np}], H^2_{\textup{\'{e}t}}\left(Y_1(Np)^2_{\bar{\mathbb{Q}}}, \mathscr{L}_{k}\boxtimes \mathscr{L}_{k'}(2-j) \right)\right) \\
     &\quad\quad\quad \longrightarrow H^1_{\textup{\'{e}t}}\left(\mathbb{Z}[\mu_{p^r}, \frac{1}{Np}], \widetilde{V}(k)^{\leq\mathfrak{s}_{\Bf}}\otimes \widetilde{V}(k')^{\leq \mathfrak{s}_{\Bg}}(-j) \right)\,. 
\end{align*}
Here, $N={\rm lcm}(N_\f,N_\g)$ and $Y(p^r,Np^{r+1})\xrightarrow{t_r} Y_1(Np)\otimes_\mathbb{Z}\mathbb{Z}[\mu_{p^r}]$ is the map denoted by $s_M$ in \cite[\S6.1]{KLZ2}. Furthermore, the second arrow is given by the Hochschild-Serre spectral sequence, and the last arrow is the pushforward along the natural projection $Y_1(Np)^2\rightarrow Y_1(N_\f p)\times Y_1(N_\g p)$ combined with the K{\"u}nneth formula.
\begin{defn}\label{defn_BFtilde_k_kp_j}
For an integer $c\geq 2$ coprime to $6Np$, we define 
\[_c\widetilde{{\rm BF}}_r(k,k',j) := ((c^2-c^{2j-k-k'+2}) \ \langle c \rangle_f\otimes \langle c \rangle_g)\cdot\widetilde{{\rm BF}}_r(k,k',j)\]
where $\langle c\rangle_*$ is the diamond operator acting on $\widetilde{V}(\,*\,)^{\leq \mathfrak{s}_*}$ for $* \in \{f,g\}$
\end{defn}

\subsubsection{} \label{subsubsection_413} 
For the rest of this section, we assume that the Coleman family $\Bf$ satisfies \eqref{item_Reg} and $\Bg$ satisfies \eqref{item_Eis}. For $\Bh \in \{\f, \Bg\}$, let $\Lambda^{\rm b}_\Bh$ denote the ring of bounded analytic functions on $U_\Bh$. We set $\Gamma_{\Bh} := \Gamma_1(N_\Bh) \cap \Gamma_0(p)$ and we write $\bm{\mathcal{L}}_\Bh$ for the $\Lambda^{\rm b}_{\Bh}[\Gamma_\Bh]$-module of locally $m$-analytic distributions on $T':= p\ZZ_p \times \ZZ_p^*$ (see, for example \cite[\S4.1]{BSV}, for more details), for a sufficiently large $m\in \ZZ$ (which we suppress from the notation). We then set \begin{align*}
V(U_{\Bh}) := H^1_{\rm par}(\Gamma_{\Bh}, \bm{\mathcal{L}_{\Bh}})(1)\otimes_{\Lambda^{\rm b}_{\h}} \Lambda_{\Bh}, \qquad 
\widetilde{V}(U_{\Bh}) := H^1(\Gamma_{\Bh}, \bm{\mathcal{L}_{\Bh}})(1)\otimes_{\Lambda^{\rm b}_{\h}} \Lambda_{\Bh}\,,
\end{align*}
and we write $V_{\Bh}$ (resp. $\widetilde{V}_{\Bh}$) for the $\Bh$-isotypic quotient of $V(U_{\Bh})^{\leq\mathfrak{s}_{\h}}$ (resp. $\widetilde{V}(U_{\Bh})^{\leq \mathfrak{s}_{\h}}$). 
If we write 
\begin{align*}
{\rm pr}_{\Bh}: V(U_{\Bh})^{\leq\mathfrak{s}_{\h}} \twoheadrightarrow V_{\Bh}\,,\qquad
{\rm pr}_{\Bh}: \widetilde{V}(U_{\Bh}))^{\leq\mathfrak{s}_{\h}} \twoheadrightarrow \widetilde{V}_{\Bh}
\end{align*}
for the natural projection maps, then we have the isomorphism 
\begin{equation}
\label{cuspidal_isom}
V_{\h_\kappa}\simeq \widetilde{V}_{\h_\kappa}\,, \qquad \forall\,\kappa\in U_\mathbf{h} \cap \mathbb{Z}^{\geq 2}\,, 
\end{equation}
since such specialisations are cuspidal. Here $V_{\h_\kappa}$ denotes the specialisation of $V_{\Bh}$ at $\kappa$ and similarly for $\widetilde{V}_{\h_\kappa}$. We refer the reader to \cite[\S2.5]{BDV} for more details.

\subsection{} \label{subsubsection_414}
Let $k \in \ZZ^{\geq 2} $ and consider $f \in S_k(\Gamma_0(N_\Bf\, p^r))$, where we recall that we assume $r \in \{0,1\}$ for the rest of the paper. If $r=0$, we let $\alpha, \beta$ denote the roots of the Hecke polynomial of $f$ at $p$, where we assume that $\rm{ord}_p(\alpha)\leq  {\rm ord}_p(\beta)$, and let $f_\alpha := f(q) - \beta \cdot f(q^p)$ denote the $p$-stabilisation of $f$ at $\alpha$. If $r=0$ (resp. $r = 1$) we let $f_\alpha = f(q) - \beta \cdot f(q^p)$ denote the $p$-stabilisation of $f$ with respect to $\alpha$. If $r=1$, we simply put $f_\alpha := f$, where $\alpha=a_p(f)$. We remark that, thanks to the works of Hida, Coleman, and Bella\"iche--Chenevier, it follows that $\Bf$ is the unique (up to conjugation) Coleman family passing through $f_\alpha$. We denote by $\LL_\f\xrightarrow{\kappa}E$ the corresponding specialisation of $\f$.
\begin{defn}
    For an integer $c\geq 2$ coprime to $6Np$, we let 
    $$_c\widetilde{{\rm BF}}_r(f_\alpha,k',j)\in H^1_{\textup{\'{e}t}}(\mathbb{Z}[\mu_{p^r}, \frac{1}{Np}], V_{f_\alpha} \otimes \widetilde{V}(k')^{\leq \mathfrak{s}_{\Bg}}(-j))$$ 
    denote the element given by \[_c\widetilde{{\rm BF}}_r(f_\alpha,k',j) =({\rm pr}_{f_\alpha}\otimes {\rm id}) (_c\widetilde{{\rm BF}}_r(k,k',j))\,, \]
    where $\widetilde{V}(k)^{\leq \mathfrak{s}_{\Bf}}\xrightarrow{{\rm pr}_{f_\alpha}} \widetilde{V}_{f_\alpha} \simeq V_{f_\alpha}$ is the projection onto the $f_\alpha$-isotypic quotient; cf. \eqref{cuspidal_isom} for the final isomorphism.
\end{defn}

\subsubsection{} For a fixed $j \in \mathbb{Z}^{\geq 0}$, it follows from \cite{KLZ2,LZ1} (especially, \S5.3 of \cite{LZ1}) that there exists an element 
\[_c\widetilde{{\rm{\bf BF}}}_{j,r}(\Bf\otimes U_\Bg) \in H^1_{{\textup{\'{e}t}}}\left(\mathbb{Z}[\mu_{p^r},\frac{1}{Np}], V_\f\hat{\otimes}_{\mathbb{Q}_p}\widetilde{V}(U_\Bg)^{\leq \mathfrak{s}_{\Bg}}(-j) \right)\]
satisfying the following interpolation property:
\[{{k}\choose{j}}{{k'}\choose{j}}\cdot ({\rm sp}_\kappa\,{\otimes}\,{\rm  sp}_{k'})(_c\widetilde{{\rm{\bf BF}}}_{j,r}(\Bf\otimes U_\Bg)) =   {_c\widetilde{{\rm BF}}(f_\alpha,k',j)}\,.\]
Here, for $X \in{\{\widetilde{V},V \}}$, we denote by 
$$X_\f^{\leq \mathfrak{s}_{\Bf}}\otimes_{\Lambda_\Bf, {\rm{sp}}_\kappa} E\xrightarrow[\,{\rm sp}_\kappa\,]{\,\sim\,} X(f_\alpha)$$ 
the isomorphism induced by the $E$-valued specialisation $\kappa$. Similarly, 
$$X(U_\Bg)^{\leq \mathfrak{s}_{\Bg}}\otimes_{\Lambda_\Bg, {\rm{sp}}_{k'}} E\xrightarrow[\,{\rm sp}_{k'}\,]{\,\sim\,} X(k')^{\leq \mathfrak{s}_{\Bg}}\,.$$
We assume $\mathfrak{s}_{\Bg} = 0$ for the rest of the paper.
\begin{lemma}[\cite{BDV}, Lemma 2.4]\label{lifting_lemma}
    There exists a unique element 
    \[_c{\rm{\bf BF}}_{j,r}(\Bf \otimes U_\Bg) \in H^1_{\textup{\'{e}t}}\left(\mathbb{Z}[\mu_{p^r}, \frac{1}{Np}], V_\f\,\widehat{\otimes}_E\, V(U_\Bg)^{\leq 0}(-j)\right)\,,\] 
    whose image under the natural map induced on cohomology by $V(U_\Bg)^{\leq 0}\hookrightarrow \widetilde{V}(U_\Bg)^{\leq 0}$ coincides with the class $_c\widetilde{{\rm{\bf BF}}}_{j,r}(\Bf\otimes U_\Bg)$.
\end{lemma}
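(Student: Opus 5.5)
The strategy is to compare the two long exact sequences in Galois cohomology induced by the inclusion $V(U_\Bg)^{\leq 0}\hookrightarrow \widetilde{V}(U_\Bg)^{\leq 0}$, after tensoring with $V_\f(-j)$. The cokernel of this inclusion is the boundary (Eisenstein) part of the cohomology of the modular curve. Since $\Bg$ satisfies \eqref{item_Eis}, this boundary part is controlled explicitly by $p$-irregular weight one Eisenstein series. First we write down the short exact sequence of $\Lambda_\Bg[G_{\QQ}]$-modules
\[
0\lra V(U_\Bg)^{\leq 0}\lra \widetilde{V}(U_\Bg)^{\leq 0}\lra B(U_\Bg)^{\leq 0}\lra 0,
\]
where $B$ is the ordinary part of the boundary cohomology of $\Gamma_\Bg$ with coefficients in $\bm{\mathcal{L}}_\Bg$. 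We tensor it with $V_\f(-j)$ over $E$; this is exact because $V_\f$ is flat over $\Lambda_\f$. Taking $H^\bullet_{\textup{\'et}}(\ZZ[\mu_{p^r},1/Np],-)$ then gives
\[
H^0\big(V_\f\hatotimes B^{\leq 0}(-j)\big)\lra H^1\big(V_\f\hatotimes V^{\leq 0}(-j)\big)\lra H^1\big(V_\f\hatotimes \widetilde V^{\leq 0}(-j)\big)\lra H^1\big(V_\f\hatotimes B^{\leq 0}(-j)\big).
\]

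For uniqueness we must show that the left-hand $H^0$ vanishes. The Galois action on $B^{\leq 0}$ is through characters: by Ohta's description, the ordinary boundary is a sum of Tate twists of finite-order characters, interpolated in the family. Any $G$-invariant in $V_\f\hatotimes B$ would therefore produce a one-dimensional quotient of specialisations $V_{f_\kappa}$ on which Galois acts by a character. This contradicts the irreducibility of $V_{f_\kappa}$ for the cuspidal eigenforms $f_\kappa$, which holds for a Zariski-dense set of $\kappa$. A torsion-freeness and density argument over $\Lambda_\f\hatotimes\Lambda_\Bg$ then upgrades the vanishing to the whole family.

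For existence we must show that the image of ${}_c\widetilde{{\bf BF}}_{j,r}(\Bf\otimes U_\Bg)$ in $H^1(V_\f\hatotimes B^{\leq 0}(-j))$ is zero. The cleanest route is to project onto the $\Bg$-isotypic part first, since Lemma~\ref{lifting_lemma} is used only after projecting to $\Bg$. Under \eqref{item_Eis}, with $\Bg$ the CM family passing through the $p$-irregular weight one Eisenstein point, the $\Bg$-localised boundary is either zero or supported only at the Eisenstein point. In the latter case its contribution is torsion, killed by a nonzero element of $\Lambda_\Bg$. Since $H^1(V_\f\hatotimes V_\Bg(-j))$ is torsion-free (by the vanishing of $H^0$ established above, applied to the quotient), we can divide the image by that element and obtain a lift.

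If the projection onto $\Bg$ is not allowed, I would instead argue by specialisation. At a Zariski-dense set of classical weights $k'\geq 2$, the Eisenstein class ${\rm Eis}(k,k',j)$ has cuspidal image under the Künneth projection onto the $f_\alpha$-part, by \cite{KLZ2}, and the boundary components die there. The image in $H^1(\cdots B)$ therefore vanishes at a dense set of points, and torsion-freeness of that $H^1$ forces it to vanish identically.

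The main obstacle is the boundary analysis at the Eisenstein point. There $V_\Bg$ is not free: this is the reason for the quotient $V_\Bg^1$ introduced earlier, and it relies on \cite{burungale2024zetaelementsellipticcurves} and \cite[\S2.5]{BDV}. Proving that the relevant cohomology is torsion-free, so that the density argument applies, is where I expect the real work to lie. I would attack it using the explicit description of the ordinary boundary cohomology together with \cite[Proposition 4.1]{BDV}, after shrinking $U_\Bg$ if necessary.
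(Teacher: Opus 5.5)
The paper gives no argument here: it quotes the lemma from \cite{BDV}. Your uniqueness step is fine. The ordinary boundary $B$ is a sum of families of characters and $V_\f$ is generically irreducible, so $H^0$ of $V_\f\hatotimes B(-j)$ vanishes. The existence step, however, has a genuine gap, and your ``cleanest route'' does not work.

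First, that route only treats the $\g$-isotypic quotient, whereas the lemma is about $V(U_\g)^{\leq 0}$. (After taking isotypic quotients, the sequence $V_\g\to\widetilde V_\g\to B_\g\to 0$ need not even stay left exact.) More seriously, the division step is invalid. Suppose the obstruction $\delta\in H^1(V_\f\hatotimes B_\g(-j))$ is only known to be killed by some $a\in\Lambda_\g$. Then you get $a\cdot{}_c\widetilde{\mathbf{BF}}_{j,r}(\Bf\otimes U_\Bg)=\iota(y)$ for some $y$. Torsion-freeness of $H^1(V_\f\hatotimes V_\g(-j))$ makes $y/a$ well defined after inverting $a$, but says nothing about whether it is integral. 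A lift exists if and only if $\delta=0$, and the torsion-ness of $B_\g$ tells you nothing about that. Moreover, the relevant $a$ vanishes at the weight-one point $\kappa'_\circ$ (otherwise $\delta$ would already be zero). So your ``lift'' could have a pole exactly at the specialisation $g_{\rm Eis}$ that the paper needs.

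Your fallback route has the right shape: show $\delta$ vanishes at a dense set of weights $k'$, then use density. But its only non-formal input is asserted and attributed to \cite{KLZ2} without justification. That input is the vanishing of the component of ${\rm Eis}(k,k',j)$ in $H^1(Y,\mathscr{L}_k)\otimes\bigl(\text{boundary of }H^1(Y,\mathscr{L}_{k'})\bigr)$. There is no Hecke-theoretic reason for this: projecting the first factor to $f_\alpha$ does nothing to the second factor, and Eisenstein eigensystems genuinely occur in $\widetilde V(k')^{\leq 0}$.

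A geometric reason is needed, and one that works is the following. The class ${\rm Eis}^{[k,k',j]}$ is a Gysin push-forward $\Delta_*$ along the diagonal. Let $\mathfrak{j}\colon Y\hookrightarrow X$ be the compactification and $i\colon\partial X\hookrightarrow X$ the boundary. Then $\Delta(Y)$ is closed in $Y\times X$ and disjoint from $Y\times\partial X$. Hence the pull-back of $\Delta_*(\cdot)\in H^3(Y\times X,\mathscr{L}_k\boxtimes R\mathfrak{j}_*\mathscr{L}_{k'})$ to $Y\times\partial X$ vanishes. By Künneth and functoriality of Hochschild--Serre, this pull-back computes exactly the image under $1\otimes\mathrm{res}_\partial$ in $H^1\bigl(\ZZ[1/Np],H^1(Y_{\overline{\QQ}},\mathscr{L}_k)\otimes H^0(\partial X_{\overline{\QQ}},R^1\mathfrak{j}_*\mathscr{L}_{k'})\bigr)$. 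The subsequent maps (degeneracy pull-backs, $(t_r\times t_r)_*$, the push-forward to $Y_1(N_\f p)\times Y_1(N_\g p)$, ordinary projectors) are of product type and compatible with the boundary sequence in the second variable. So $\delta$ vanishes at every classical $k'$, and your density argument then finishes the proof.

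Finally, the ``main obstacle'' you name (torsion in $V_\g$, i.e.\ $V^1_\g$) plays no role in this lemma. The lemma concerns the full ordinary parabolic cohomology, before any projection to $\g$.
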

\begin{defn}
    Let 
    \[_c{\rm{\bf BF}}_{j,r}(\Bf\otimes \Bg) \in H^1_{\textup{\'{e}t}}\left(\mathbb{Z}[\mu_{p^r}, \frac{1}{Np}], V_\f\,\widehat\otimes\,V_\g(-j) \right)\]
    denote the image of $_c{\rm{\bf BF}}_{j,r}(\Bf\otimes U_\Bg)$ under the map induced by the projection $V(U_\Bg)^{\leq 0}\xrightarrow{{\rm pr}_\Bg} V_\Bg$.
\end{defn}
\subsubsection{} Let $\mathcal{H}:= \mathcal{H}(\ZZ_p^\times)$ denote Perrin--Riou's extended Iwasawa algebra. The following proposition is a direct consequence of Lemma~\ref{lifting_lemma} and the results of \cite{LZ1}.
\begin{proposition}\label{proposition_BF_properties} 
    \item[(a)] There exists an element $_c{\rm{\bf BF}} (\Bf\otimes \Bg) \in H^1_{\rm Iw}\left(\mathbb{Q}(\mu_{p^\infty}), V_\f\,\widehat\otimes\,V_\g \otimes \mathcal{H} \right)$ satisfying the equality 
    \[{\rm ev}_j(_c{\rm{\bf BF}} (\Bf\otimes \Bg)) = (a_p(\Bf)\cdot a_p(\Bg))^{-r}\cdot (j!)^{-1}\cdot {_c{\rm{\bf BF}}_{j,r}(\Bf\otimes \Bg)}\] 
    for all natural numbers $j$, where the map ${\rm ev}_j$ induced by evaluation on $\mathcal{H}$ at $j$. Furthermore, if $\mathfrak{s}_\Bf =0$, then $_c{\mathbf{BF}(\Bf \otimes \Bg)} \in H^1_{\rm Iw}\left(\mathbb{Q}(\mu_{p^\infty}), V_\f\,\widehat\otimes\,V_\g \right)$.
    \item[(b)] Let $\kappa$ (resp. $\kappa'$) denote specialisations of $\LL_\f$ (resp. of $\LL_\g$) of weight $k+2$ (resp. $k'+2$). Then the element $_c{\rm{\bf BF}} (\Bf\otimes \Bg)$  satisfies
    \[(\kappa\hatotimes \kappa')\circ {\rm ev}_j (_c{\rm{\bf BF}} (\Bf\otimes \Bg)) = \frac{1}{j! {k\choose j}{k' \choose j} }\left(1-\frac{p^j}{a_p(\Bf_\kappa)\cdot a_p(\Bg_{\kappa'})}\right){}_c{\rm{\bf BF}}(\Bf_\kappa, \Bg_{\kappa'},j)\] 
    for integers $0\leq j\leq\min(k,k')$, where 
    $_c{\rm{\bf BF}}(\Bf_\kappa, \Bg_{\kappa'},j) \in H^1\left(\mathbb{Q}, V_{\Bf_k}\otimes V_{\Bg_{k'}}(-j)\right) $
    is the image of the class $_c\widetilde{{\rm BF}}_0(k,k',j)$ given as in Definition~\ref{defn_BFtilde_k_kp_j}, under the map induced by the projection onto $\Bf_\kappa\times\Bg_{\kappa'}$-isotypic components. 
\item[(c)] The class $_c{\rm{\bf BF}} (\Bf\otimes \Bg)$ belongs to the balanced Selmer group $H^1_{{\rm Iw},{\rm bal}}\left(\mathbb{Q}(\mu_{p^\infty}), V_\Bf\,\widehat\otimes V_\g\,)\right)$ given as in \S\ref{subsubsec_2025_12_17_1139}.
\end{proposition}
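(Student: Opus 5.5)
The plan is to assemble the big class out of the finite-level classes ${}_c{\rm\bf BF}_{j,r}(\Bf\otimes\Bg)$ supplied by Lemma~\ref{lifting_lemma}, following \cite[\S5]{LZ1} and the proof of \cite[Proposition 2.3]{BDV}. For (a), I would first consider the classes ${}_c\widetilde{\rm\bf BF}_{j,r}(\Bf\otimes U_\Bg)$ for varying $r\ge 0$ and $j\ge 0$. The norm-compatibility relations in the $r$-direction (\cite[Theorem 5.4.2]{LZ1}, coming from \cite{KLZ2}) show that after renormalising by $(a_p(\Bf)a_p(\Bg))^{-r}$ the family $\{(a_p(\Bf)a_p(\Bg))^{-r}\,{}_c\widetilde{\rm\bf BF}_{j,r}\}_r$ is norm compatible. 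It also satisfies the growth bound of order $\mathfrak{s}_\Bf+\mathfrak{s}_\Bg=\mathfrak{s}_\Bf$, since we assume $\mathfrak{s}_\Bg=0$. The congruences between different $j$ then come from the Clebsch--Gordan/moment maps. The Loeffler--Zerbes interpolation criterion (their theorem on families of cohomology classes satisfying the ``$h$-admissibility'' and moment congruences) now produces a unique element of $H^1_{\rm Iw}(\QQ(\mu_{p^\infty}),V_\f\widehat\otimes\widetilde V(U_\Bg)^{\le0}\otimes\mathcal H)$, tempered of order $\mathfrak{s}_\Bf$, whose moment at $j$ is $(j!)^{-1}$ times the renormalised finite-level class. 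The lifting argument of Lemma~\ref{lifting_lemma} applies verbatim in the Iwasawa-theoretic setting: the cokernel of $V(U_\Bg)^{\le0}\hookrightarrow\widetilde V(U_\Bg)^{\le0}$ is Eisenstein, and the relevant $H^0$ vanishes. So the element descends to $V(U_\Bg)^{\le0}$, and projecting along ${\rm pr}_\Bg$ gives ${}_c{\rm\bf BF}(\Bf\otimes\Bg)$. When $\mathfrak{s}_\Bf=0$ the distribution has order $0$, i.e.\ it is a measure, so the class lies in the integral Iwasawa cohomology.

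For (b), I would specialise ${\rm ev}_j$ of the class via $\kappa\hatotimes\kappa'$ and use the interpolation property of ${}_c\widetilde{\rm\bf BF}_{j,r}$ recorded before Lemma~\ref{lifting_lemma}. This gives $\binom{k}{j}^{-1}\binom{k'}{j}^{-1}$ times the class built from $f_\alpha$ and $g$ at level $r$. To compare with the level-zero class ${}_c{\rm\bf BF}(\Bf_\kappa,\Bg_{\kappa'},j)$, I would use the $r=0$ case: pushing forward $t_0$ along the $p$-stabilisation maps introduces exactly the Euler-type factor $\bigl(1-\frac{p^j}{a_p(\Bf_\kappa)a_p(\Bg_{\kappa'})}\bigr)$, as in \cite[Theorem 5.7.6]{KLZ2}/\cite[Prop.~5.3]{LZ1}. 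Matching the $j!$ normalisations then yields the stated formula.

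For (c), I would follow \cite[Theorem 8.1.4]{KLZ2} and \cite[Proposition 3.5.5]{LZ1}. The local image at $p$ of every classical specialisation $(\kappa,\kappa',j)$ in the critical range $0\le j\le \min(k,k')$ lies in $H^1_{\rm g}$, since these are geometric classes. By the explicit description of $H^1_{\rm f}$ for the triangulation, this forces the image in $H^1(\QQ_p,\DD^-_\f\widehat\otimes\DD^-_\g)$ to vanish at a Zariski-dense set of points. Because the target is torsion-free over $\Lambda_\f\widehat\otimes\Lambda_\g\widehat\otimes\mathcal H$ (here \eqref{item_Reg} rules out the relevant $H^0$), the projection vanishes identically, so the class lies in the balanced Selmer group; the local conditions away from $p$ are automatic.

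I expect the main obstacle to be the descent step in (a) when $\Bg$ is Eisenstein (\eqref{item_Eis}). There the splitting $V$ versus $\widetilde V$ and the torsion in $V_\g$ (hence the passage to $V_\g^1$) make the uniqueness and lifting delicate. My first attempt would be to invoke \cite[Lemma 2.4]{BDV} fibrewise over the affinoid cover $\mathcal H_n$ and pass to the inverse limit, using Mittag-Leffler.
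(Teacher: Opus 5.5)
Your outline follows the route behind the paper's proof. The paper only refers to the discussion after \cite[Lemma 2.4]{BDV}, which rests on \cite{LZ1} and \cite{KLZ2}: renormalised norm relations in $r$, interpolation in $j$ by tempered classes of order $\mathfrak{s}_\Bf$, descent from $\widetilde V(U_\Bg)$ to $V(U_\Bg)$, and for (c) the standard density argument. Part (c) is fine as you state it. Two steps, however, are not right as written.

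\textbf{The Euler factor in (b).} The factor $\bigl(1-\frac{p^j}{a_p(\Bf_\kappa)a_p(\Bg_{\kappa'})}\bigr)$ does not come from $p$-stabilisation maps.
\begin{itemize}
\item The class ${}_c{\rm\bf BF}(\Bf_\kappa,\Bg_{\kappa'},j)$ is already defined at level $Np$, from ${}_c\widetilde{\rm BF}_0(k,k',j)$ on $Y_1(Np)^2$. It is then projected onto the eigencomponents of the $p$-stabilised specialisations, so no maps between level $N$ and level $Np$ intervene.
\item Pushing forward to the level-$N$ newforms would introduce further Euler-type factors, which do not appear in (b).
\end{itemize}
The factor is produced at the bottom of the cyclotomic tower. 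The layer-$0$ component of the Iwasawa class is $(a_p(\Bf)a_p(\Bg))^{-1}$ times the corestriction from $\QQ(\mu_p)$ of the $r=1$ class. The norm relation from $r=1$ to $r=0$ is the degenerate one, for a prime dividing the level: it carries an extra term $-p^j\sigma_p$ alongside $U_p'\otimes U_p'$. Over $\QQ$ this gives exactly $1-\frac{p^j}{a_p a_p}$. Consequently the formula in (a) must be read for $r\ge 1$, and (b) is its $r=0$ counterpart.

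\textbf{The lifting step in (a).} Vanishing of $H^0$ of $V_\Bf\widehat\otimes\bigl(\widetilde V(U_\Bg)^{\le 0}/V(U_\Bg)^{\le 0}\bigr)(-j)$ gives only injectivity. Existence of a lift needs the image in $H^1$ with coefficients in that Eisenstein quotient to vanish. This is exactly what Lemma~\ref{lifting_lemma} provides at each finite level $(j,r)$.

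The paper therefore lifts first. The classes ${}_c{\rm\bf BF}_{j,r}(\Bf\otimes\Bg)$ in (a) are by definition the lifted and projected ones. Uniqueness in Lemma~\ref{lifting_lemma} transports the compatibilities, and \cite{LZ1} is applied afterwards. So the obstacle you anticipate never arises.

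If you keep your order, finish as follows. The image of your interpolated class in the Eisenstein-quotient Iwasawa cohomology has all its moments zero, by Lemma~\ref{lifting_lemma}. It therefore vanishes by the uniqueness part of the interpolation, since an order-$\mathfrak{s}_\Bf$ class is determined by its moments. No fibrewise Mittag-Leffler argument is needed.
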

\begin{proof}
    This is explained as part of the discussion immediately following the proof of \cite[Lemma 2.4]{BDV}.
\end{proof}
We assume until the end of this paper that $\varepsilon_\f=\mathds{1}$ is the trivial character.
\begin{defn}
   We let $_c{\mathbf{BF}^\dagger(\Bf \otimes \Bg)} \in H^1_{\rm Iw}\left(\mathbb{Q}(\mu_{p^\infty}), V_\f^\dagger\,\widehat\otimes\,V_\g \otimes \mathcal{H} \right)$ $($resp. in $H^1_{\rm Iw}\left(\mathbb{Q}(\mu_{p^\infty}), V_\f^\dagger\,\widehat\otimes\,V_\g\right)$, if $\mathfrak{s}_\Bf=0$$)$ denote the central critical twist of the Beilinson--Flach element. Explicitly, 
   \[ _c{\mathbf{BF}^\dagger(\Bf \otimes \Bg)} = \rm{Tw}_{\Theta^{-1}}\left(_c{\mathbf{BF} (\Bf \otimes \Bg)}\right)\,, \] 
   where $\Theta$ is given as in \cite[Definition 2.1.3]{howard2007Inventiones} and $\rm{Tw}_{\Theta^{-1}}$ is the map induced from the twisting morphism
   \[V_\Bf \hatotimes \Lambda(\Gamma_{\rm cyc}) \,\xrightarrow{\,x\otimes \gamma \,\mapsto\, \Theta^{-1}(\gamma)\,x \otimes \gamma \,}\, V_\Bf^\dagger \hatotimes \Lambda(\Gamma_{\rm cyc})\,.\] 
\end{defn}

\subsection{Beilinson-Flach Elements over $K$} \label{subsubsection_433}
We now specialise to the setting where $\g=\g_\tau$ is a CM family with branch character $\tau$ given as in \S\ref{subsubsec_356_2026_1_19_1714}. Recall the affinoids $\rm{Spm} (\mathscr{O}_{\Bg_{\tau,n}})$ and the associated big Galois representations $V_{\Bg_{\tau, n}}$ introduced in \S \ref{Identifying_Iwasawa_Cohomologies_via_Covering}.
\subsubsection{}
Let us denote by 
$$_c{\rm{\bf BF}}^\dagger(\Bf\otimes {\g_\tau}) \in \varprojlim_n H^1\left(\QQ, V_\Bf^\dagger \hatotimes V_{\g_{\tau, n}} \hatotimes \mathcal{H}(\Gamma^\circ_{\rm cyc})\right)$$
the image of $({_c{\mathbf{BF}^\dagger(\Bf \otimes \g_{\tau,n})}})_n$ under the map induced by $\Gamma_{\rm cyc} \twoheadrightarrow \Gamma^\circ_{\rm cyc}$ and recall the isomorphism \eqref{equation_Induced_Rep_isom}, which we fix in a compatible way with our earlier choice of the basis $\{\lambda_{\g_\tau}, \lambda_{\g_\tau}^c\}$. We also recall our convention that $\g_{\mathds{1}} = \g$ and we note that in this case, we actually view $_c\mathbf{BF}^\dagger(\f \otimes \g_n)$ as an element of $  H^1\left(\Q, V^\dagger_\f \hatotimes V^1_{\g_n} \hatotimes \mathcal{H} \right)$ (cf. the discussion at the end of \S \ref{subsubsec_356_2026_1_19_1714}), where we recall that $(\cdot)^1 = (\cdot) / (\cdot)_{\rm tors}$.
\begin{remark}
\normalfont
    When $\tau=\mathds{1}$, we only know that we have an element $_c\mathbf{BF}^\dagger(\f\otimes \g_{n}) \in H^1\left(\Q, V^\dagger_\f \hatotimes \widetilde{V}^1_{\g_n} \hatotimes \mathcal{H} \right)$ a-priori. However Lemma \ref{lifting_lemma} allows us to lift this to an element of $ H^1\left(\Q, V^\dagger_\f \hatotimes {V}^\mathds{1}_{\g_n} \hatotimes \mathcal{H} \right)$. We henceforth dispense with the superscript $(\cdot)^\mathds{1}$ and write $V_{\g_n}$. \hfill $\triangle$
\end{remark}
Let us set
\[_c{\rm{\bf BF}}_\Bf^\tau \in H^1\left(K, V_\Bf^\dagger \hatotimes \mathcal{H}^{\sharp}_\tau(\Gamma_\fp^\circ) \hatotimes \mathcal{H}^\iota(\Gamma^\circ_{\rm cyc})\right)\] 
to be the image of $_c{\mathbf{BF}^\dagger(\Bf \otimes \Bg_\tau)}$ under Shapiro's isomorphism, and let 
\[{}_c{\rm BF}^{\tau}_{\rm ac, \Bf} \in H^1\left(K, V^\dagger_{\Bf} \hatotimes \mathcal{H}_\tau^\iota(\Gamma^\circ_{\rm ac})\right )\,,\] 
denote the image of $_c{\rm{\bf BF}}_\Bf^\tau$ under the map induced by \eqref{eq_ses_gamma_gamma_gammaac_first}. 
\subsubsection{}
By \cite[Corollary 4.4.11]{KPX} we have that 
$$ \varprojlim_n \, H^1 \left(\QQ_{p}, V_\Bf^\dagger \hatotimes V_{\Bg_{\tau,n}} \hatotimes \mathcal{H}(\Gamma^\circ_{\rm cyc})\right) \simeq \varprojlim_n \, H^1 \left(\QQ_{p}, \bD^\dagger_{\rm rig}(V_\Bf \hatotimes V_{\Bg_{\tau,n}}) \hatotimes \bD^\dagger_{\rm rig}(\mathcal{H}^\iota(\Gamma^\circ_{\rm cyc}))\right).$$
Consider the image of $\rm{res}_p({_c\mathbf{BF}^\dagger(\f \otimes \g_{\tau})})$ under the map on cohomology induced by the maps $\bD^\dagger_{\rm rig}(V_\Bf \hatotimes V_{\Bg_{\tau,n}}) \twoheadrightarrow \bD^\dagger_{\rm rig}(V_\Bf) \hatotimes \DD^-_{\g_{\tau,n}} $. By Proposition~\ref{proposition_BF_properties}(c), this class corresponds to a uniquely determined class 
\begin{align*}
\begin{aligned}
    \label{eqn_2026_02_04_1422}
     {_c{\rm BF}^{\tau^c,+}_{\fp}}\,\in\, H^1\left(K_{\fp}, \DD^{\dagger+}_\Bf \hatotimes\bD^\dagger_{\rm rig}(\mathcal{H}^{\sharp}_{\tau}(\Gamma^\circ_\fp)^c)\hatotimes \bD^\dagger_{\rm rig}(\mathcal{H}^\iota(\Gamma^\circ_{\rm cyc})) \right)\,\stackrel{\eqref{-=fp-}}{\simeq}\, H^1 \left(\QQ_{p}, \DD^{\dagger+}_\Bf \hatotimes \DD^{-}_{\g_\tau} \hatotimes \bD^\dagger_{\rm rig}(\mathcal{H}^\iota(\Gamma^\circ_{\rm cyc})) \right)\,.
\end{aligned}
\end{align*}
 We also define 
$$_c{\rm BF}^{\tau, -}_{\fp} \in H^1 \left(K_\fp, \DD^{\dagger-}_\f \hatotimes \bD^\dagger_{\rm rig}(\mathcal{H}^\sharp_{\tau}(\Gamma^\circ_\p)) \hatotimes \bD^\dagger_{\rm rig}(\mathcal{H}^\iota(\Gamma^\circ_{\rm cyc}))\right) \stackrel{\eqref{+=fp}}{\simeq} H^1 \left(\Qp, \DD^{\dagger-}_\Bf \hatotimes \DD^+_{\g_\tau} \hatotimes \bD^\dagger_{\rm rig}( \mathcal{H}^{\iota}(\Gamma^\circ_{\rm cyc})) \right)$$ 
as the unique element that corresponds to the image of ${\rm res}_p({_c{\mathbf{BF}}^\dagger(\Bf\otimes {\g_\tau})})$ under the map induced by $\Ddagrigf(V_\Bf^\dagger) \twoheadrightarrow \DD^{\dagger-}_\Bf$.

\begin{defn}
    \label{defn_2026_02_04_1426}
    Let us denote by $_c{\rm BF}^{\tau^{-1}, +}_{\rm ac,\fp}$ the image of $_c{\rm BF}^{\tau^c ,+}_\fp$ inside
\[{\rm im}\left(H^1\left(K_{\fp}, \DD^{\dagger+}_\Bf \hatotimes\bD^\dagger_{\rm rig}(\mathcal{H}^{\sharp}_{\tau}(\Gamma^\circ_\fp)^c)\hatotimes \bD^\dagger_{\rm rig}(\mathcal{H}^\iota(\Gamma^\circ_{\rm cyc})) \right)/(\gamma^+-1) \hookrightarrow H^1 \left (K_\fp, \DD^{\dagger+}_\Bf \hatotimes \bD^\dagger_{\rm rig}( \mathcal{H}_{\tau^{-1}}^\iota(\Gamma^\circ_{\rm ac}))\right) \right)\,.\] 
Similarly, we let $_c{\rm BF}^{\tau,-}_{\rm ac, \fp}$ denote the image of $_c{\rm BF}^{\tau, -}_{\fp}$ inside
\[ {\rm im}\left(H^1 \left(K_\fp, \DD^{\dagger-}_\f \hatotimes \bD^\dagger_{\rm rig}(\mathcal{H}^\sharp_{\tau}(\Gamma^\circ_\p)) \hatotimes \bD^\dagger_{\rm rig}(\mathcal{H}^\iota(\Gamma^\circ_{\rm cyc}))\right)/(\gamma^+-1) \hookrightarrow H^1 \left(K_\fp, \DD^{\dagger-}_\Bf \hatotimes \bD^\dagger_{\rm rig}(\mathcal{H}_\tau^\iota(\Gamma^\circ_{\rm ac}))\right) \right).\]
\end{defn}
See Remark~\ref{remark_simpler_description_of_BF^+_p} below for an equivalent and simpler description of $_c{\rm BF}^{\tau^{-1}, +}_{\rm ac,\fp}$.

\subsubsection{Reciprocity laws for $_c{\rm BF}^\tau_{\rm ac ,\Bf}$}
In this subsubsection, we recall the $p$-adic $L$-functions attached to the Rankin--Selberg convolution of two modular forms. Using the isomorphisms that we fixed in \S\ref{subsec_Iwasawa_Algebras}, we may view these $p$-adic $L$-functions as functions of cyclotomic and anticyclotomic variables. We may thus write a ``Taylor expansion'' of these $p$-adic $L$-functions in the variable $\gamma_\cyc-1$, and it is the coefficient of $\gamma_\cyc -1$ that we will view as the cyclotomic derivative of the $p$-adic $L$-function ``evaluated at the trivial (cyclotomic) character''. Moreover, we recall the relationship between Beilinson-Flach elements and these $p$-adic $L$-functions in the form of explicit reciprocity laws. These results will be leveraged in \S \ref{subsec_GZ_formula} in an essential way to prove the main theorem of this article.

In \cite[Appendix II]{AndreattaIovita_triple_product} and \cite{LoefflerCMB}, the authors construct the aforementioned $p$-adic $L$-functions attached to the pair $(\f,\g)$; one for when the family $\f$ is dominant, and one for when the family $\g$ is dominant. These $p$-adic $L$-functions are elements of $\Lambda_\f \hatotimes \Lambda_\g \hatotimes \mathcal{H}(\Gamma_\cyc)$. If $\h \in \{\f,\g\}$, let us write
$$L^{\Bh}_p(\Bf,{\g_\tau},1+\Bj) \in  \Lambda_\Bf \hatotimes \mathcal{H}(\Gamma^\circ_{\fp}) \hatotimes \mathcal{H}(\Gamma^\circ_{\rm cyc}) \stackrel{\eqref{eq_isom_+_-} + \eqref{eq_isom_star_bullet} }{\simeq} \Lambda_\Bf \hatotimes \mathcal{H}(\Gamma^\circ_{\rm ac}) \hatotimes \mathcal{H}(\Gamma^\circ_{\rm cyc})$$ 
for the image (under our identification $\Lambda_{\g_\tau} \simeq \Lambda(\Gamma_\fp^\circ)$) of the $\Bh$-dominant $p$-adic $L$-function after projecting $\Lambda(\Gamma_{\rm cyc}) \twoheadrightarrow \Lambda(\Gamma^\circ_{\rm cyc})$. We may write 
\begin{equation}
    L_p^{\Bh}(\Bf,{\g_\tau}, \Bj+\frac{\Bk}{2}) = \mathfrak{L}^{\Bh}_{{\rm ac},0}(\Bf,\tau) + \mathfrak{L}^{\Bh}_{{\rm ac},1}(\Bf,\tau)\cdot \left(\frac{\gamma_{\rm cyc}-1}{\log(\chi_{\cyc}(\gamma_\cyc))}\right) + \mathfrak{L}^{\Bh}_{{\rm ac},2}(\Bf,\tau)\cdot \left(\frac{\gamma_{\rm cyc}-1}{\log(\chi_{\cyc}(\gamma_\cyc))}\right)^2 +\dots, \label{eq_expansion_of_l_function}
\end{equation}
with $\mathfrak{L}^{\Bh}_{{\rm ac},i}(\Bf) \in \Lambda_\Bf \hatotimes \mathcal{H}(\Gamma^\circ_{\rm ac} )$. As before, let us denote by $\mathbf{k}$ (resp. $\Bk'$) the universal cyclotomic character on the weight space of $\f$ (resp. $\g_\tau$), and $\mathbf{k}/2$ (resp. $\Bk'/2$) its unique square root.

\begin{theorem}[Reciprocity  laws for Beilinson--Flach elements] \label{Theorem_Reciprocity_for_BF}\label{reciprocity_BF}
  Let $\lambda_\Bh$ denote the Atkin--Lehner pseudo-eigenvalue on the primitive family $\Bh \in \{\Bf, \g_\tau\}$. Then there exists an explicit element $$\mathscr{R}(\g_\tau) \in \Frac(\Lambda_\Bf \hatotimes \mathcal{H}(\Gamma^\circ_\p) \hatotimes \mathcal{H}(\Gamma^\circ_{\rm cyc})) $$ 
  depending only on $\g_\tau$ so that the following hold:
   
   \item[i)] $\langle\mathfrak{Log}^{\Gamma}_{-+} ({_c{\rm{BF}}_\fp^{ \tau, -}}), \eta_\Bf \rangle_\DD = (-1)^{\Bj+\frac{\Bk}{2}} \lambda_{{\Bf}}^{-1} \cdot (c^2-c^{2\Bj -\Bk'} \cdot \varepsilon_\Bf (c)
^{-1}\varepsilon_{\g_\tau}(c)^{-1})\cdot L_p^{(\f)}(\Bf,{\g_\tau},\Bj+\frac{\Bk}{2}). $
    \item[ii)] 
    $\langle\mathfrak{Log}^{\Gamma}_{+-} ({_c{\rm{BF}}_\fp^{ \tau^c, +}}), \omega_\Bf \rangle_\DD = \frac{1}{\mathscr{R}(\g_\tau)}(-1)^{\Bj+\frac{\Bk}{2}} \lambda_{{\Bg_\tau}}^{-1} \cdot (c^2-c^{2\Bj -\Bk'} \cdot \varepsilon_\Bf (c)
^{-1}\varepsilon_{\g_\tau}(c)^{-1})\cdot L_p^{(\g_\tau)}(\Bf,{\g_\tau},\Bj+\frac{\Bk}{2} ).$
\end{theorem}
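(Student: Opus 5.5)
The plan is to deduce both identities from the explicit reciprocity laws of Kings--Loeffler--Zerbes and Loeffler--Zerbes for the two-variable family $\f\times\g$ (cf. \cite[Theorem 7.1.5]{LZ1}, \cite[Theorem 10.2.2]{KLZ2}, and in the Eisenstein case \cite{BDV,burungale2024zetaelementsellipticcurves}). We then transport them through the identifications fixed in \S\ref{Identifying_Iwasawa_Cohomologies_via_Covering} and the bases of \S\ref{subsubsec_Fixing_Basis}. Those theorems state that the image of ${\rm res}_p\,{}_c\mathbf{BF}(\f\otimes\g)$ in $H^1(\QQ_p,\DD^-_\f\hatotimes\DD^+_\g\hatotimes\mathcal{H}^\iota)$, resp. $H^1(\QQ_p,\DD^+_\f\hatotimes\DD^-_\g\hatotimes\mathcal{H}^\iota)$, is sent by the Perrin-Riou/Nakamura big logarithm \eqref{equation_Logarithm_Nakamura}, paired against $\eta_\f\otimes\omega_\g$, resp. $\omega_\f\otimes\eta_\g$, to the $\f$-dominant, resp. $\g$-dominant, Rankin--Selberg $p$-adic $L$-function. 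The output carries the factor $(c^2-c^{2\Bj-\Bk-\Bk'}\varepsilon_\f(c)^{-1}\varepsilon_\g(c)^{-1})$ from Definition~\ref{defn_BFtilde_k_kp_j}, and the relevant Atkin--Lehner pseudo-eigenvalue comes from normalising the Poincaré pairing as in \eqref{item_P1}.

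The steps are as follows. First, apply the $\f$-dominant law to ${}_c\mathbf{BF}(\f\otimes\g_{\tau,n})$ over each affinoid $\mathscr{O}_{\g_{\tau,n}}$ and pass to the inverse limit in $n$. This is legitimate because the logarithms \eqref{equation_Logarithm_Nakamura} are compatible with restriction between the $\mathcal{H}_n$. Second, perform the central critical twist ${\rm Tw}_{\Theta^{-1}}$. This replaces $V_\f$ by $V_\f^\dagger$ and shifts the cyclotomic variable $\Bj\mapsto \Bj+\Bk/2$. Compatibility of the big logarithm with twisting produces the sign $(-1)^{\Bj+\Bk/2}$, which comes from the Perrin-Riou twisting formula $\mathcal{L}_{V(1)}=-\ell_0\cdot{\rm Tw}\circ\mathcal{L}_V$ together with the $\Gamma$-factor normalisations. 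Third, identify $\DD^+_{\g_\tau}$ with $\bD^\dagger_{\rm rig}(\mathcal{H}^\sharp_\tau(\Gamma^\circ_\fp))$ via \eqref{+=fp}. Because $\lambda_{\g_\tau}$ was chosen to correspond to $1$ under $\omega^\vee_{\g_\tau}$, the pairing with $\omega_{\g_\tau}$ becomes trivial. We are then left with exactly $\langle\mathfrak{Log}^\Gamma_{-+}({}_c{\rm BF}^{\tau,-}_\fp),\eta_\f\rangle_\DD$, and this gives (i). Throughout, we use $\varepsilon_\f=\mathds 1$ and the identification $\Lambda_{\g_\tau}\simeq\Lambda(\Gamma^\circ_\fp)$, under which $\Bk'$ is the weight variable of $\g_\tau$.

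For (ii) we run the same argument with the $\g$-dominant law, using \eqref{-=fp-} and the basis $\lambda^c_{\g_\tau}$ of $\mathcal{F}^-V_{\g_\tau}$. The difference is that the $\g$-dominant law pairs against $\eta_{\g_\tau}$, not $\omega_{\g_\tau}$. Our trivialisation of $\DD^-_{\g_\tau}$ is through $\lambda^c_{\g_\tau}$, which is the complex conjugate of the $\omega^\vee$-normalised basis. The ratio between $\langle\lambda^c_{\g_\tau},\eta_{\g_\tau}\rangle$ and $1$ is therefore a nonzero element of $\Frac(\Lambda(\Gamma^\circ_\fp))$. We define $\mathscr{R}(\g_\tau)$ to be this ratio, pulled back to the three-variable ring. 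It involves the Poincaré pairing of $\omega_{\g_\tau}$ and $\eta_{\g_\tau}$ (an adjoint $L$-value/congruence-number type quantity for the CM family) together with the action of $c$. By construction it depends only on $\g_\tau$, which yields (ii) with the factor $1/\mathscr{R}(\g_\tau)$ and $\lambda_{\g_\tau}^{-1}$ in place of $\lambda_\f^{-1}$.

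The main obstacle I expect is the Eisenstein case \eqref{item_Eis'}. There $\g$ is a CM family with $p$-irregular weight-one Eisenstein specialisation, so the lattice $V^1_\g$ and the triangulation are not the standard ones. The reciprocity law must be taken from \cite[\S5]{burungale2024zetaelementsellipticcurves} and \cite[Proposition 2.3]{BDV} rather than from \cite{KLZ2}, and one has to check that the lift from Lemma~\ref{lifting_lemma} is compatible with the projection to $\DD^\mp_\g$. I would handle this by verifying the identities after specialising to the Zariski-dense set of classical points $(\kappa,\kappa')$ with $\kappa'\ge 2$ and $j$ in the critical range. At those points $\g_{\kappa'}$ is cuspidal by \eqref{cuspidal_isom}, so the classical reciprocity laws apply. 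Since both sides are analytic, density then gives the identities in general. Keeping track of the precise signs and pseudo-eigenvalues across the twist is bookkeeping rather than a conceptual difficulty.
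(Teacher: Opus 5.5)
Your proposal follows the paper's proof. The paper likewise sets $\mathscr{R}(\g_\tau):=\langle\lambda^c_{\g_\tau},\eta_{\g_\tau}\rangle$ and reads off (i) and (ii) from \cite[Theorem 7.1.5 and Remark 7.1.6]{LZ1} under \eqref{item_Reg'} and from \cite[Proposition 2.3]{BDV} under \eqref{item_Eis'}, by unwinding the definitions of $\mathfrak{Log}^\Gamma_{-+}$, $\mathfrak{Log}^\Gamma_{+-}$ and the fixed bases $\lambda_{\g_\tau},\lambda^c_{\g_\tau}$.

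One small correction to your bookkeeping concerns the sign $(-1)^{\Bj+\frac{\Bk}{2}}$. It does not come from Perrin-Riou's twisting formula, which concerns integral Tate twists and carries a factor $\ell_0$ that is absent from the statement. It is the sign already present in the Loeffler--Zerbes law, rewritten after the shift of the cyclotomic variable induced by ${\rm Tw}_{\Theta^{-1}}$, since the family logarithm of \cite{LZ1} commutes with such twists by construction.
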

\begin{proof}
    Let us set 
    \[\mathscr{R}(\g_\tau):= \langle \lambda_{\g_\tau}^c,\eta_{\g_\tau}\rangle,\] 
    where we have abused notation to denote by $\lambda_{\g_\tau}^c$ the element of $\DD^-(V_{\g_{\tau}})$ corresponding to the fixed generator $\lambda_{\g_\tau}^c \in \mathcal{F}^-\TT_{\g_\tau}$. The result now follows directly as a consequence of \cite[Theorem 7.1.5]{LZ1} when \eqref{item_Reg'} holds (see also Remark $7.1.6$ of op. cit.) and \cite[Proposition 2.3]{BDV} when \eqref{item_Eis'} holds, together with the definition of $\mathfrak{Log}^\Gamma_{?}$ for $? \in \{-+, +-\}$.
\end{proof}
Recall the maps $\mathscr{L}^\tau_{\omega_\f}$ and $\mathscr{L}^\tau_{\eta_\f}$ from Definition \ref{definition_L_omega_and_L_eta}. We then have the following immediate consequence of Theorem \ref{Theorem_Reciprocity_for_BF}:

\begin{corollary} 
\label{corollary_reciprocity_for_anticyclotomic_BF} 
    For $\Bh \in \{\Bf, \g_\tau \}$, let us set $\mathscr{C}_{\rm ac}(\Bh) = (-1)^{\Bj+ \frac{\Bk}{2}} \lambda_{{\Bh}}^{-1} \cdot (c^2-c^{2\Bj -\Bk'} \cdot \varepsilon_\Bf (c)
^{-1}\varepsilon_{\g_\tau}(c)^{-1}) \ {\rm mod} \ (\gamma_+-1)$. Let us also put $\mathscr{R}_{\rm ac}(\g_\tau) := \mathscr{R}(\g_\tau) \ {\rm mod} \ (\gamma_+-1) \in \Frac(\Lambda_\Bf \hatotimes \mathcal{H}(\Gamma^\circ_{\rm ac}))$.  Then the following hold:
  \item[i)] $\mathscr{L}^\tau_{\eta_\Bf}({_c{\rm BF}^{\tau,-}_{\rm ac, \fp}}) = \mathscr{C}_{\rm ac}(\Bf)\cdot \mathfrak{L}^{\Bf}_{{\rm ac},0}(\Bf,\tau)$\,.
   
    \item[ii)] $\mathscr{L}^{\tau^{c}}_{\omega_\Bf}({_c{\rm BF}^{\tau^{-1},+}_{\rm ac, \fp}}) = \dfrac{\mathscr{C}_{\rm ac}(\g_\tau)}{\mathscr{R_{\rm ac}(\g_\tau)}}\cdot \mathfrak{L}^{\Bg_\tau}_{{\rm ac},0}(\Bf,\tau)\,.$
\end{corollary}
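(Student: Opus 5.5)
The plan is to obtain both statements by reducing the two-variable reciprocity laws of Theorem~\ref{Theorem_Reciprocity_for_BF} modulo $(\gamma_+-1)$ and checking that every construction involved is compatible with this reduction.

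\textbf{Step 1 (reduction of the source).} By construction, ${}_c{\rm BF}^{\tau,-}_{\rm ac,\fp}$ is the image of ${}_c{\rm BF}^{\tau,-}_{\fp}$ under
\[
H^1\bigl(K_\fp,\DD^{\dagger-}_\Bf\hatotimes\bD^\dagger_{\rm rig}(\mathcal{H}^\sharp_\tau(\Gamma^\circ_\fp))\hatotimes\bD^\dagger_{\rm rig}(\mathcal{H}^\iota(\Gamma^\circ_\cyc))\bigr)\longrightarrow (\,\cdot\,)/(\gamma_+-1),
\]
followed by the embedding of Remark~\ref{remark_embedding_into_ac}. Similarly, ${}_c{\rm BF}^{\tau^{-1},+}_{\rm ac,\fp}$ is the reduction of ${}_c{\rm BF}^{\tau^c,+}_\fp$ (Definition~\ref{defn_2026_02_04_1426}).

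\textbf{Step 2 (reduction of the logarithm maps).} By Definition~\ref{def_2026_06_03_1051}, $\mathfrak{Log}^{\rm ac}_{-+}$ and $\mathfrak{Log}^{\rm ac}_{+-}$ are exactly the maps induced by $\mathfrak{Log}^\Gamma_{-+}$ and $\mathfrak{Log}^\Gamma_{+-}$ modulo $\gamma_+-1$. These reductions are well defined because the big logarithms are $\mathcal{H}(\Gamma^\circ_\fp)\hatotimes\mathcal{H}(\Gamma^\circ_\cyc)$-linear. The pairing $\langle\,\cdot\,,\eta_\Bf\rangle_\DD$ (respectively $\langle\,\cdot\,,\omega_\Bf\rangle_\DD$) is linear in the Iwasawa variables, so it also commutes with reduction modulo $\gamma_+-1$. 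Applying these observations to Theorem~\ref{Theorem_Reciprocity_for_BF}(i) and (ii), reduced modulo $(\gamma_+-1)$, gives
\[
\mathscr{L}^\tau_{\eta_\Bf}\bigl({}_c{\rm BF}^{\tau,-}_{\rm ac,\fp}\bigr)=\mathscr{C}_{\rm ac}(\Bf)\cdot L^{(\Bf)}_p\bigl(\Bf,\g_\tau,\Bj+\tfrac{\Bk}{2}\bigr)\bmod(\gamma_+-1),
\]
together with the analogous identity in case (ii), which carries the extra factor $\mathscr{R}_{\rm ac}(\g_\tau)^{-1}$.

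\textbf{Step 3 (identifying the reduced $L$-function).} Under the identifications \eqref{eq_isom_+_-} and \eqref{eq_isom_star_bullet}, $\gamma_+$ maps to $\gamma_\cyc$. Reducing modulo $(\gamma_+-1)$ therefore sets $\gamma_\cyc-1=0$ in the expansion \eqref{eq_expansion_of_l_function}, which leaves only the constant term $\mathfrak{L}^{\Bh}_{{\rm ac},0}(\Bf,\tau)$. The anticyclotomic variable must be transported consistently with the involution $\gamma_\fp\mapsto\gamma_\fp^{-1}$ built into \eqref{eq_ses_gamma_gamma_gammaac_first} and \eqref{eq_ses_gamma_gamma_gammaac}. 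For case (ii), the $c$-twisted version must land on the character $\tau^{-1}_{\ac}$, which matches the superscript $\tau^c$ on $\mathscr{L}_{\omega_\Bf}$.

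\textbf{Main obstacle.} The main difficulty is the bookkeeping in Step 3. One must confirm that the coordinate identifications used to define $\mathscr{L}^\tau_{\eta_\Bf}$ and $\mathscr{L}^{\tau^c}_{\omega_\Bf}$ agree with those used to write \eqref{eq_expansion_of_l_function}; otherwise a stray inversion of the anticyclotomic variable appears. One must also make sure that $\mathscr{R}(\g_\tau)$ has a well-defined, nonzero reduction, so that $\mathscr{R}_{\rm ac}(\g_\tau)$ makes sense in $\Frac$. The corollary implicitly treats this reduction as meaningful, so it would only need checking that $\langle\lambda^c_{\g_\tau},\eta_{\g_\tau}\rangle$ does not become a zero divisor after reduction.
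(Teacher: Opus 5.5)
Your proposal is correct and matches the paper, which presents the corollary as an immediate consequence of Theorem~\ref{Theorem_Reciprocity_for_BF}: reduce both reciprocity laws modulo $(\gamma_+-1)$, use that $\mathscr{L}^\tau_{\eta_\Bf}$ and $\mathscr{L}^{\tau^c}_{\omega_\Bf}$ are by definition built from $\mathfrak{Log}^\Gamma_{-+}$ and $\mathfrak{Log}^\Gamma_{+-}$ modulo $(\gamma_+-1)$, and note that this reduction keeps only the constant term of \eqref{eq_expansion_of_l_function}. Your worry about $\mathscr{R}_{\rm ac}(\g_\tau)$ is harmless: $\mathscr{R}(\g_\tau)$ involves only the $\Gamma^\circ_\fp$-variable, and the composite $\Gamma^\circ_\fp\to\Gamma^\circ/\Gamma^+\simeq\Gamma^\circ_\ac$ is an isomorphism, so after clearing the denominator in part (ii) and reducing, a nonzero $\mathscr{R}(\g_\tau)$ has nonzero reduction and you may divide by it.
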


\begin{remark}\label{remark_R_ac_vs_L(g)}
\normalfont
    Suppose $\tau=\mathds{1}$ and let $\Lambda(\Gamma^\circ_\p)\xrightarrow{\kappa'_\circ} \Zp$ denote the augmentation map, so that $\kappa'_\circ$ corresponds to the weight one specialisation of $\g$: 
    \[g_{\rm Eis}:=\g_{\kappa'_\circ} = \sum_{(\mathfrak{a}, \p)=1} \mathds{1}(\mathfrak{a}) \,q^{\mathbf{N}(\mathfrak{a})}\,.\] 
    We recall the element $\mathcal{L}(g)$ defined at the start of \cite[\S4.6]{BDV} (and note that $g$ in op. cit. is $g_{\rm Eis}$ in this article). Setting $\mathscr{R} := \mathscr{R}_{\rm ac}(\g_{\kappa'_\circ})$, we then have that $\mathscr{R}= {1}/{\mathcal{L}(g)}$.
     \hfill $\triangle$
\end{remark}
%%%%%%%%%%%%%%%%%%%%%%%%%%%%%%%%
%%%%%%%%%%%%%%%%%%%%%%%%%%%%%%%%
%%%%%%%%%%%%%%%%%%%%%%%%%%%%%%%%

\section{A review of big Heegner classes}
\label{subsubsec_2026_01_07_1431}
Let $K$ be an imaginary quadratic field where $p=\p\p^c$ splits, and $N\geq 5$ an integer either coprime to $p$ or else $p\mid\mid N$, satisfying the Heegner hypothesis relative to $K$. 
%That there exists an ideal $\fN \subset \mathcal{O}_K$ such that $\mathcal{O}_K/\fN \simeq \ZZ/N\ZZ$. 
Let $f \in S_{k+2}^{\rm new}(\Gamma_0(N))$ be a newform as in our introduction (\S\ref{sec_2026_01_10_1700}). As in the introduction, let us fix a $p$-sabilisation $f^\alpha$ of $f$, where $\alpha$ is a root of the Hecke polynomial of $f$ at $p$. Note that if $p\mid N$, then $\alpha=\pm p^{\frac{k}{2}}$ by \cite[Theorem 4.6.17]{miyake06}. Finally, we let $\f$ denote the unique finite slope family through $f^\alpha$, as in \S\ref{subsec_Hida_families} or \S\ref{subsec_Coleman_families_revisited}.  
\subsection{Heegner cycles}
We recall the basic properties of Heegner cycles on Kuga--Sato varieties and associated Heegner classes, following the discussion in \cite[\S II]{nekovarGZ}. Let $w$ be a natural number and let $h\in S_{w+2}^{\rm new}(\Gamma_0(N))$ be a newform. 

Let $Y(N)$ denote the modular curve over $\QQ$, which is the moduli of elliptic curves with full level $N$ structure. We let $\mathfrak{j}:\, Y(N) \to X(N)$ denote its non-singular compactification. Since we assume $N \geq 3$, there is a universal generalised elliptic curve $\mathscr{E}\to X (N)$, which restricts to $\mathscr{E}\to Y(N)$. Assume first that $w>0$. Then the $w$-fold fibre product of $\mathscr{E}$ with itself over $Y(N)$ has a canonical non-singular compactification $W$, cf. \cite{Deligne1971Bourbaki, Scholl1990Inventiones}. We have natural maps
\begin{equation}
\label{eqn_KugaSatotoVg}
H^{w+1}_{\textup{\'{e}t}}(W_{\overline{\QQ}},\QQ_p)(1+w/2)\lra  H^{1}_{\textup{\'{e}t}}(X(N)_{\overline{\QQ}},\mathfrak{j}_*{\rm Sym}^{w}({\mathscr{H}_{\QQ_p}^\vee}))(1+w/2)\ra V_h^\dagger(\mathscr{H}_{\QQ_p}^\vee)\,.
\end{equation}
Scholl in \cite{Scholl1990Inventiones} defines a projector $\varepsilon$ and proves that there is a canonical isomorphism
$$H^{1}_{\textup{\'{e}t}}(X(N)_{\overline \QQ},\mathfrak{j}_*{\rm Sym}^{w}({\mathscr{H}_{\QQ_p}^\vee}))\stackrel{\sim}{\lra}\varepsilon H^{w+1}_{\textup{\'{e}t}}(W\times_{\overline\QQ},\QQ_p)\,.$$ 
We finally define 
$$\mathscr{B}:=\left\{\left(\begin{array}{cc}* & *\\ 0& * \end{array} \right) \right\}\Big{/}\{\pm 1\} \subset \GL_2(\ZZ/N\ZZ)\big{/}\{\pm 1\} $$
and the idempotent $\varepsilon_{\mathscr B}:=\frac{1}{|\mathscr{B}|}\sum_{g\in \mathscr{B}}g$, which acts on the modular curves $Y(N)$ and $X(N)$.

Let $H_K$ denote the Hilbert class field of $K$. We let ${\rm CH}^{\kappa/2}(W_{H_K})_0\otimes \QQ \xrightarrow{{\rm AJ}_h} H^1(H_K,V_h^\dagger(\mathscr{H}_{\QQ_p}^\vee))$ be the compositum of the map \eqref{eqn_KugaSatotoVg} with the Abel--Jacobi map. We then have a Heegner class
$$z_h:={\rm cor}_{H_K/K}\left({\rm AJ}_h(\varepsilon_{\mathscr B}\varepsilon Y)\right) \in H^1(K,V_h^\dagger(\mathscr{H}_{\QQ_p}^\vee))\,,$$
where the idempotent $\varepsilon_{\mathscr B}$ and the projector $\varepsilon$ are as above, and $Y$ is the cycle of dimension $w/2$ given as in \cite[\S II.3.6]{nekovarGZ} (so that $\varepsilon_{\mathscr B}\varepsilon Y$ is the Heegner cycle).

The case $w=0$ is elementary: One simply takes $W=X(N)$ above and $Y$ a point that has CM by $\cO_K$.

\subsubsection{} The discussion above generalises and permits one to define Heegner classes $z_h(c)\in H^1(K[c],V_h^\dagger(\mathscr{H}_{\QQ_p}^\vee))$
for all ring-class extensions of $K$ of conductor $c$. 

\subsubsection{}
\label{subsubsec_2026_1_11_1120}
Employing the isomorphism \eqref{eqn_2026_01_10_1340}, we have $\lambda_N(h)^{-1}W_N(z_h)\in H^1(K,V_h^\dagger)$. When we treat the isomorphism as an identification (as, for example, Nekov\'a\v{r} does, cf. \cite{nekovarGZ}, \S II.4.2), we will sometimes write $z_h$ in place of $\lambda_N(h)^{-1}W_N(z_h)$ to ease our notation. 

\subsubsection{} When $p\nmid N$, the varieties $X(N)$, $X_0(N)$ and $W$ have good reduction at $p$, and it follows from \cite[Theorem 3.1(i)]{Nekovar2000AJHeightsBanff1998} that $z_h\in H^1_{\rm f}(K,V_h^\dagger)$, namely the Heegner class falls within the Bloch--Kato Selmer group. If $p\mid \mid N$, this is more subtle, and this fact follows from the recent work of Nekovář and Nizioł~\cite{NekovarNiziol}.

\subsection{Big Heegner classes} 
The Heegner classes introduced above can be interpolated in $p$-adic families. The following result in this vein summarises the main results of \cite{howard2007Inventiones,Ota2020, JLZ}. Together with Corollary~\ref{cor_thm_bigheegnermain}, it will be used to upgrade Corollary~\ref{cor_2026_01_20_2022} (the $p$-adic Gross--Zagier theorem up to non-zero algebraic factors) to its precise form in Theorem~\ref{thm_2026_01_20_2021}.

Let $\f$ be the unique primitive family through $f^\alpha$ of tame level $N_\f=N/\gcd(N,p)$ and trivial tame nebentype. We say that a classical specialisation $\kappa$ is crystalline if the corresponding eigenform $\f_\kappa$ is $p$-old. In that situation, we put $\mathcal{E}(\f_\kappa):=\left(1-\frac{p^{w}}{\alpha(\kappa)^2}\right)$, and $\mathcal{E}^*(\f_\kappa):=\left(1-\frac{p^{w+1}}{\alpha(\kappa)^2}\right)\,.$ 
\begin{theorem}
\label{thm_bigheegnermain} 
There exists a unique class $\mathscr{Z}_{\f}\in H^1_{\rm f}(G_{K,S}, V_\f^\dagger)$ that is characterised by the following interpolative property: For any classical crystalline specialisation $\kappa$ of weight $w\geq 0$, so that $\f_\kappa\in S_{w+2}(\Gamma_0(Np))$ is $p$-old, we have 
\begin{equation}
\label{eqn_2026_01_bigheegnermain_2}
{\rm Pr}^{\alpha(\kappa)}_*\circ\,\kappa\,\circ \mathscr{Z}_{\f}=\dfrac{\left(1-\dfrac{p^{\frac{w}{2}}}{\alpha(\kappa)}\right)^{2}}{u_K(2\sqrt{-D_K})^{\frac{w}{2}}}\cdot \alpha(\kappa)^{-1}\, \lambda_N(\f_\kappa^\circ)^{-1}W_N(z_{\f_\kappa^\circ}) \in H^1_{\rm f}(G_{K,S},V_{\f_\kappa^\circ}^\dagger),
\end{equation}
where $u_K=|\cO_K^\times|/2$ and $-D_K$ is the discriminant of $K$. Moreover, 
\begin{equation}
\label{eqn_2026_01_bigheegnermain_1}
\kappa\,\circ\,\mathscr{Z}_{\f}\,=\,{\dfrac{\left(1-\dfrac{p^{\frac{w}{2}}}{\alpha(\kappa)}\right)^{2}}{u_K(2\sqrt{-D_K})^{\frac{w}{2}}}\frac{W_{Np}\circ({\rm Pr}^{\alpha(\kappa)})^*({z_{\f_\kappa^\circ}})}{\alpha(\kappa)\lambda_N(\f_\kappa^\circ)\mathcal{E}(\f_\kappa)\mathcal{E}^*(\f_\kappa)} }\in  H^1_{\rm f}(G_{K,S},V_{\f_\kappa}^\dagger)\,.
\end{equation}
If, on the other hand, if $p\mid N$ and $\f_\kappa=f$, then
\begin{equation}
\label{eqn_2026_01_bigheegnermain_3}
\kappa\,\circ\,\mathscr{Z}_{\f}\,=\,\alpha^{-1}z_{\etale}^{[f,\mathbf{N}^{\frac{k}{2}}]}\in H^1_{\rm f}(G_{K,S},V_{f}^\dagger)\,,
\end{equation}
where $\mathbf{N}$ the norm character and $z_{\etale}^{[f,\mathbf{N}^{\frac{k}{2}}]}$ is the generalised Heegner class given as in \cite[Definition 3.5.3]{JLZ}.
\end{theorem}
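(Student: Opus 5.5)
The theorem is essentially a repackaging of the big Heegner class constructions of Howard (in the ordinary case), Ota, and Jetchev--Loeffler--Zerbes (for finite slope). The plan is to take the class constructed there and normalise it so that \eqref{eqn_2026_01_bigheegnermain_2} holds. First, we would take the big Heegner class $\mathfrak{z}_\f\in H^1(K,V_\f^\dagger)$ from \cite{JLZ}. It is obtained by interpolating generalised Heegner classes $z_{\etale}^{[\f_\kappa,\mathbf{N}^{w/2}]}$ along the anticyclotomic tower and restricting to the self-dual line; in the Hida case one could equivalently use \cite{howard2007Inventiones, Ota2020}. We then set $\mathscr{Z}_\f$ to be this class multiplied by the normalising factor $\mathbb{a}_p(\f)^{-1}$, so that $\kappa\circ\mathscr{Z}_\f=\alpha(\kappa)^{-1}z_{\etale}^{[\f_\kappa,\mathbf{N}^{w/2}]}$. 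At a $p$-new point this is \eqref{eqn_2026_01_bigheegnermain_3} by construction. Membership in the Selmer complex cohomology $H^1(V_\f^\dagger,\DD_\f^{\dagger+})$ would follow in two steps. First, the local image at $\p$ and $\p^c$ lies in the $\DD^{\dagger+}$-part, because it does so at the Zariski-dense set of crystalline points; at those points the classes are Bloch--Kato (cf. \cite{Nekovar2000AJHeightsBanff1998}), and for non-critical slope the Bloch--Kato subspace equals the image of $H^1(\DD^+)$. Second, the map from the Selmer complex $H^1$ to $H^1(G_{K,S},V)$ is injective, since $H^0(K_\p,\DD^-)=0$ generically in \eqref{eqn_2025_12_13_1653}.

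Next comes the interpolation formula at crystalline $\kappa$. JLZ compare their generalised Heegner class at a $p$-old point with the $p$-stabilisation of the classical Heegner cycle. Their anticyclotomic Euler-factor computation shows that pushing forward along ${\rm pr}^{\alpha}_*$ produces $(1-p^{w/2}/\alpha)^2$ times $z_{\f_\kappa^\circ}$. This factor comes from the two primes $\p,\p^c$, with the central character $\mathbf{N}^{w/2}$ evaluated at Frobenius. The comparison of Nekov\'a\v{r}'s normalisation of $Y$ with the Bertolini--Darmon--Prasanna/JLZ normalisation of generalised Heegner cycles accounts for the factor $u_K(2\sqrt{-D_K})^{w/2}$, and the Atkin--Lehner identification \S\ref{subsubsec_2026_1_11_1120} introduces $\lambda_N^{-1}W_N$. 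Tracking these constants is routine but the main source of error. The hardest step is matching the JLZ normalisations (choices of periods and of the level-$N$ structure for the Heegner point) with Nekov\'a\v{r}'s $\varepsilon_{\mathscr B}\varepsilon Y$. I would attempt it by comparing both at weight $w=0$, where both are just Heegner points, and then using the compatibility of the Abel--Jacobi map with the Kuga--Sato projector.

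Formula \eqref{eqn_2026_01_bigheegnermain_1} then follows from \eqref{eqn_2026_01_bigheegnermain_2}. On the $\alpha$-part, the composite ${\rm pr}^\alpha_*\circ W_{Np}\circ({\rm pr}^\alpha)^*$ acts as the scalar $\alpha\,\mathcal{E}\mathcal{E}^*$ up to the $U_p$ normalisation in \eqref{item_P2}. This is the standard computation of the Petersson norm of a $p$-stabilised form, and since ${\rm pr}^\alpha_*$ is an isomorphism on $\alpha$-isotypic parts, it can be inverted.

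Uniqueness follows because classical crystalline points are Zariski-dense in $U_\f$ and $H^1(V_\f^\dagger,\DD_\f^{\dagger+})$ is torsion-free, being an $H^1$ of a perfect complex concentrated in degrees $\ge1$ with $H^0=0$. Hence a class with prescribed specialisations at a dense set is unique.
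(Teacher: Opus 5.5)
Apart from one step, your plan follows the paper. The paper obtains \eqref{eqn_2026_01_bigheegnermain_2} from the interpolation formula of \cite[Theorem 5.4.1]{JLZ}, ${\rm Pr}^{\alpha(\kappa)}_*\circ\kappa\circ\mathscr{Z}_{\f}=(1-p^{w/2}/\alpha(\kappa))^{2}\,\alpha(\kappa)^{-1}z_{\etale}^{[\f_\kappa^\circ,\mathbf{N}^{w/2}]}$, followed by a conversion of generalised Heegner classes into Nekov\'a\v{r}'s classes. It reads off \eqref{eqn_2026_01_bigheegnermain_3} from \cite[Proposition 5.3.1]{JLZ}.

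The step that does not work as you propose is exactly that conversion, namely the identity $z_{\etale}^{[\f_\kappa^\circ,\mathbf{N}^{w/2}]}=\lambda_N(\f_\kappa^\circ)^{-1}W_N(z_{\f_\kappa^\circ})/\bigl(u_K(2\sqrt{-D_K})^{w/2}\bigr)$.
\begin{itemize}
\item A comparison at $w=0$ cannot pin this down, since there $(2\sqrt{-D_K})^{w/2}=1$.
\item Compatibility of Abel--Jacobi maps with the Kuga--Sato projector does not carry a weight-$0$ normalisation up to weight $w$. For $w>0$ the two cycles are different objects: the generalised Heegner cycle is built from graphs of an isogeny $A\to E_x$ inside $W\times A^w$, whereas $\varepsilon_{\mathscr B}\varepsilon Y$ is built from graphs of $\sqrt{-D_K}$ on $E_x\times E_x$ inside $W$.
\end{itemize}
One has to project the former onto the $\mathbf{N}^{w/2}$-isotypic part of the cohomology of $A^w$ and compare it there with the latter. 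This is a genuinely weight-$w$ computation, and it is where $(2\sqrt{-D_K})^{w/2}$ comes from. It is \cite[Proposition 4.1.2]{bertolinidarmonprasanna13} (cf. the proof of \cite[Theorem 6.5]{CastellapadicvariationofHeegnerpoints}), which the paper invokes and your proposal lacks. Without it, \eqref{eqn_2026_01_bigheegnermain_2} is not established.

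For \eqref{eqn_2026_01_bigheegnermain_1} you take a different route. The paper simply quotes \cite[Proposition 4.15(i)]{BL-GHC}, whereas you invert ${\rm Pr}^{\alpha(\kappa)}_*$ on the $\alpha(\kappa)$-part via a Schur-type scalar. This is legitimate, but the scalar must be exact. Applying ${\rm Pr}^{\alpha(\kappa)}_*$ to \eqref{eqn_2026_01_bigheegnermain_1} and comparing with \eqref{eqn_2026_01_bigheegnermain_2} shows the two formulas are compatible precisely when ${\rm Pr}^{\alpha(\kappa)}_*\circ W_{Np}\circ({\rm Pr}^{\alpha(\kappa)})^*=\mathcal{E}(\f_\kappa)\mathcal{E}^*(\f_\kappa)\cdot W_N$ on $z_{\f_\kappa^\circ}$.

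With the scalar $\alpha(\kappa)\mathcal{E}(\f_\kappa)\mathcal{E}^*(\f_\kappa)$ that you name, your inversion would produce \eqref{eqn_2026_01_bigheegnermain_1} with $\alpha(\kappa)^2$ rather than $\alpha(\kappa)$ in the denominator. So ``up to the $U_p$ normalisation'' has to be resolved by fixing the normalisations of ${\rm Pr}^{\alpha}_*$, $({\rm Pr}^{\alpha})^*$ and $W_{Np}$ consistently with \eqref{item_P2}. That bookkeeping is what \cite{BL-GHC} supplies.

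The paper does not argue Selmer membership or uniqueness; these come with the cited construction. Your density arguments are reasonable additions, with one caveat. The Selmer complex is a priori only perfect in degrees $[0,3]$, and $H^0=0$ for the family does not by itself let you represent it in degrees $\geq 1$; compare $[\Lambda_\f\xrightarrow{\,x\,}\Lambda_\f]$ in degrees $0,1$. You need $H^0$ of the Selmer complex of every specialisation to vanish, which holds here because no $V^\dagger_{\f_\kappa}$ has $G_K$-invariants.
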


\begin{proof}
For $\kappa$ as in \eqref{eqn_2026_01_bigheegnermain_2}, it follows from the interpolation property of big generalised Heegner cycles (cf. \cite{JLZ}, Theorem 5.4.1) that 
\begin{equation}
\label{eqn_JLZ_541}
    {\rm Pr}^{\alpha(\kappa)}_*\circ\kappa\,\circ \mathscr{Z}_{\f}=\left(1-\dfrac{p^{\frac{w}{2}}}{\alpha(\kappa)}\right)^{2} \alpha(\kappa)^{-1}\,z_{\etale}^{[\f_\kappa^\circ,\mathbf{N}^{\frac{w}{2}}]}\in H^1_{\rm f}(G_{K,S},V_{\f_\kappa^\circ}^\dagger)\,,
\end{equation}
where $\mathbf{N}$ the norm character as above, and $z_{\etale}^{[\f_\kappa^\circ,\mathbf{N}^{\frac{w}{2}}]}$ is the generalised Heegner class; see also Remark~\ref{remark_2026_06_31_0939}. %$\alpha(\kappa)$ is because the class that fits in the family is the $p$-stabilised class, namely $\alpha(\kappa)^{-1} {Heeg}_{Np}=z_{\etale}^{[\f_\kappa^\circ,\mathbf{N}^{\frac{w}{2}}]}$$, where Heeg_{Np} is the Heegner class at level $Np$.
Moreover, by the comparison of generalised Heegner cycles and classical Heegner cycles from \cite[Proposition 4.1.2]{bertolinidarmonprasanna13} (see also the proof of \cite{CastellapadicvariationofHeegnerpoints}, Theorem~6.5), we have
\begin{equation}
\label{eqn_BDP_412}
z_{\etale}^{[\f_\kappa^\circ,\mathbf{N}^{\frac{w}{2}}]}=\dfrac{\lambda_N(\f_\kappa^\circ)^{-1}W_N(z_{\f_\kappa^\circ})}{u_K(2\sqrt{-D_K})^{\frac{w}{2}}}\,,
\end{equation}
where we remark that the appearance of the normalised Atkin--Lehner operators is explained in \S\ref{subsubsec_2026_1_11_1120}. The proof of \eqref{eqn_2026_01_bigheegnermain_2} follows on combining \eqref{eqn_JLZ_541} and \eqref{eqn_BDP_412}. 

The equality \eqref{eqn_2026_01_bigheegnermain_1} is a restatement of \cite[Proposition 4.15(i)]{BL-GHC}, whereas the equality \eqref{eqn_2026_01_bigheegnermain_3} follows from \cite[Proposition 5.3.1]{JLZ}. %omission of ${k-2 \choose k/2-1}$ is explained in the proof of Theorem 7.2.4 in op. cit.}
\end{proof}

\begin{remark}
\label{remark_2026_06_31_0939}
\normalfont
    The Galois representations considered in \cite{JLZ} are of the form $V_{f} \otimes \chi $ for some Galois character $\chi$ associated with a Hecke character of infinity type $(k-j, j)$. In this article, we consider Galois representations of the form $V_f \otimes \chi'=V_f^\dagger \otimes \mathbf{N}^{-\frac{k}{2}}\chi'$, where $\chi'$ corresponds to a Hecke character of infinity type $(k/2 -j, k/2 +j)$. We can obtain one from the other by twisting by the anticyclotomic Galois characters associated to the Hecke character of infinity type $(\frac{k}{2}, -\frac{k}{2})$ and trivial conductor. This amounts to a re-parametrisation of the two-parameter families considered here and in \cite{JLZ}.  \hfill $\triangle$
\end{remark}
\begin{defn}
    \label{defn_adhoc_Heegner}
    Suppose that $p\mid\mid N_f$. Let us define $z_f:=2\alpha^{-1}z_{\etale}^{[f,\mathbf{N}^{\frac{k}{2}}]}/(2\sqrt{-D_K})^{\frac{k}{2}}$, where $z_{\etale}^{[f,\mathbf{N}^{\frac{k}{2}}]}$ is the generalised Heegner class as in the statement of Theorem~\ref{thm_bigheegnermain}. 
\end{defn}

\begin{remark}
\label{rem_2026_01_13_1605}
\normalfont
We expect that when $p\mid\mid N_f$, the element $z_f$ of Definition~\ref{defn_adhoc_Heegner} coincides with the classical Heegner class. Note that $2=(1+p^{\frac{k}{2}}/\alpha)$ is the relevant Euler factor.
 \hfill $\triangle$
\end{remark}
\begin{corollary}
    \label{cor_thm_bigheegnermain}
In the situation of Theorem~\ref{thm_bigheegnermain}, we have the following identities of $p$-adic heights.
\item[i)] Suppose that the specialisation $\kappa$ is crystalline of weight $w\geq 0$. Then:  
$$\kappa\,\circ\, \left<\mathscr{Z}_{\f},\mathscr{Z}_{\f}\right>=\dfrac{\left(1-\frac{p^{\frac{w}{2}}}{\alpha(\kappa)}\right)^{4}}{(4|D_K|)^{\frac{w}{2}}}\dfrac{h_{\f_{\kappa}^\circ}(z_{\f_\kappa^\circ},z_{\f_\kappa^\circ})}{\alpha(\kappa)\lambda_{N_\f}(\f_\kappa^\circ)\mathcal{E}(\f_\kappa)\mathcal{E}^*(\f_\kappa)}\,.$$

\item[ii)] Suppose that $p\mid N$ and $\f_\kappa=f$. Then:
$$\kappa\,\circ\, \left<\mathscr{Z}_{\f},\mathscr{Z}_{\f}\right>=\dfrac{4\,h_{f}(z_{f},z_{f})}{\alpha(4|D_K|)^{\frac{k}{2}}}\,.$$
\end{corollary}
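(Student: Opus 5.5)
We deduce both identities from Theorem~\ref{thm_bigheegnermain} and the interpolation properties of the height pairing \eqref{eqn_2026_01_09_1351} recorded in \S\ref{subsubsec_354_2026_02_06}. First, the height pairing on the family is $\LL_\f$-bilinear and compatible with specialisation. This holds because the Selmer complex $\RG(V_\f^\dagger,\DD_\f^{\dagger+})$ is perfect and the Bockstein construction commutes with base change along $\kappa$ (cf.\ \cite{benoisheights}). Hence $\kappa\circ\langle\mathscr{Z}_\f,\mathscr{Z}_\f\rangle=\langle \kappa\circ\mathscr{Z}_\f,\kappa\circ\mathscr{Z}_\f\rangle_\kappa$, where $\langle\,,\,\rangle_\kappa$ is the pairing on $H^1_{\rm f}(G_{K,S},V_{\f_\kappa}^\dagger)$ induced by $\kappa\circ\langle\,,\,\rangle_\f$.

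For (i), we write $\kappa\circ\mathscr{Z}_\f$ using \eqref{eqn_2026_01_bigheegnermain_1}, that is, as a scalar multiple of $W_{Np}\circ({\rm Pr}^{\alpha(\kappa)})^*(z)$ with $z=z_{\f_\kappa^\circ}$. We write the second copy using \eqref{eqn_2026_01_bigheegnermain_2}. The commutative diagram \eqref{eqn_2026_01_09_1534}, which comes from \eqref{item_P2}, then gives
\[
\langle U_p^{-1}W_{Np}({\rm Pr}^{\alpha})^*x,\,y\rangle_\kappa=h_{\f_\kappa^\circ}(x,{\rm Pr}^\alpha_*y).
\]
Since $U_p$ acts on $V_{\f_\kappa}$ by $\alpha(\kappa)$, we have $W_{Np}({\rm Pr}^\alpha)^*x=\alpha(\kappa)\,U_p^{-1}W_{Np}({\rm Pr}^\alpha)^*x$. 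Thus
\[
\langle\kappa\mathscr{Z}_\f,\kappa\mathscr{Z}_\f\rangle_\kappa = c_1\,\alpha(\kappa)\, h_{\f_\kappa^\circ}\bigl(z,{\rm Pr}^\alpha_*\kappa\mathscr{Z}_\f\bigr),
\]
where $c_1$ is the scalar in \eqref{eqn_2026_01_bigheegnermain_1}. Next we substitute \eqref{eqn_2026_01_bigheegnermain_2} for ${\rm Pr}^\alpha_*\kappa\mathscr{Z}_\f$, which contributes a further factor $c_2=(1-p^{w/2}/\alpha)^2/(u_K(2\sqrt{-D_K})^{w/2}\alpha)$ together with $\lambda_N^{-1}W_N$ applied to $z$. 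We identify $\lambda_N^{-1}W_N z$ with $z$ as in \S\ref{subsubsec_2026_1_11_1120}. Collecting the constants, the two factors $(2\sqrt{-D_K})^{w/2}$ multiply to $(-4D_K)^{w/2}$. Here $u_K=1$ by \eqref{item_Disc}. The main bookkeeping obstacle is the sign $(-1)^{w/2}$ and the power of $\alpha(\kappa)$. One factor of $\alpha$ cancels against the $\alpha(\kappa)$ coming from $U_p$. For the sign, we expect it to be absorbed by the convention $\sqrt{-D_K}^2$ versus $|D_K|$ built into the normalisation of $z$, or equivalently into the sign of $W_N$ in \S\ref{subsubsec_2026_1_11_1120}. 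This yields the stated expression with the single remaining factor $\alpha(\kappa)\lambda_{N_\f}\mathcal{E}\mathcal{E}^*$ in the denominator. I would check this carefully against \eqref{eqn_BDP_412}.

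For (ii), the specialisation $f$ is $p$-new with $r=1$. We use \eqref{eqn_2026_01_bigheegnermain_3} together with Definition~\ref{defn_adhoc_Heegner}, which give $\kappa\circ\mathscr{Z}_\f=\alpha^{-1}z_{\etale}^{[f,\mathbf N^{k/2}]}=\tfrac12(2\sqrt{-D_K})^{k/2}z_f$. The diagram \eqref{eqn_2026_01_09_1406}, which comes from \eqref{item_P1}, identifies $\kappa\circ\langle\,,\,\rangle$ with $\alpha\lambda\, h_f$, up to the normalisation of $\lambda$ fixed there. Bilinearity then gives $\tfrac14(4|D_K|)^{k/2}\alpha\,h_f(z_f,z_f)$, up to the same normalisation. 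To match the stated form $4h_f/(\alpha(4|D_K|)^{k/2})$, the factors must be reconciled with the conventions in \eqref{eqn_2026_01_09_1406}. I expect the reconciliation to come from reading $h_f$ as induced from $\langle\,,\,\rangle_{Np}$ normalised by $\lambda_{Np}(f)$ and $\alpha$, as in \eqref{item_P1}. Tracking these normalising constants consistently is the only genuinely delicate point. The structural argument is simply bilinearity combined with the two comparison diagrams.
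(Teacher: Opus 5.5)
Part (i) of your proposal follows the paper's argument. You specialise the pairing, expand one copy of $\kappa\circ\mathscr{Z}_\f$ by \eqref{eqn_2026_01_bigheegnermain_1}, and move $W_{Np}\circ({\rm Pr}^{\alpha(\kappa)})^*$ across with \eqref{eqn_2026_01_09_1534}, picking up $\alpha(\kappa)$ from $U_p^{-1}$. You then substitute \eqref{eqn_2026_01_bigheegnermain_2} and use that $\lambda_N^{-1}W_N$ is a self-adjoint involution. Your count of the powers of $\alpha(\kappa)$ and $\lambda$ is right, and $u_K=1$ by \eqref{item_Disc}. The sign you worry about is genuinely there, since $(2\sqrt{-D_K})^{w}=(-1)^{w/2}(4|D_K|)^{w/2}$. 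The paper's proof simply writes the product of the two normalising factors as $(4|D_K|)^{w/2}$, so on this point you are on the same footing as the paper.

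The gap is in (ii). Your computation from Definition~\ref{defn_adhoc_Heegner} as printed is correct. It gives $\kappa\circ\mathscr{Z}_\f=\tfrac12(2\sqrt{-D_K})^{k/2}z_f$, hence $\kappa\circ\langle\mathscr{Z}_\f,\mathscr{Z}_\f\rangle=\tfrac14(2\sqrt{-D_K})^{k}\,\alpha\lambda\,h_f(z_f,z_f)$ by \eqref{eqn_2026_01_09_1406}.

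The reconciliation you propose cannot work. Up to sign and $\lambda$, this differs from the target $4h_f(z_f,z_f)/(\alpha(4|D_K|)^{k/2})$ by the factor $\alpha^2(4|D_K|)^{k}/16$. Renormalising $h_f$ through $\lambda_{Np}(f)$ and $\alpha$ can only shift powers of $\alpha$ and $\lambda$. The power of $|D_K|$ and the $16$ come from how $z_f$ is normalised against $z_{\etale}^{[f,\mathbf N^{k/2}]}$, not from the Poincar\'e duality pairing.

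The missing point is what makes (ii) come out, and also what its later use with $(\star)=\alpha/4$ in Corollary~\ref{cor_2026_01_20_2022} relies on. It is the $p$-new analogue of \eqref{eqn_2026_01_bigheegnermain_2}, with the single Euler factor $1-p^{k/2}/\alpha=2$ (as $\alpha=-p^{k/2}$) mentioned in Remark~\ref{rem_2026_01_13_1605}:
\[
\kappa\circ\mathscr{Z}_\f=\alpha^{-1}z_{\etale}^{[f,\mathbf N^{k/2}]}=\frac{2\,\alpha^{-1}z_f}{(2\sqrt{-D_K})^{k/2}}\,.
\]
In other words, Definition~\ref{defn_adhoc_Heegner} must be read with the roles of $z_f$ and $z_{\etale}^{[f,\mathbf N^{k/2}]}$ interchanged. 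With this normalisation, your own argument gives $\alpha\lambda\cdot 4\alpha^{-2}(2\sqrt{-D_K})^{-k}h_f(z_f,z_f)$; that argument is bilinearity plus \eqref{eqn_2026_01_09_1406} and \eqref{eqn_2026_01_bigheegnermain_3}, which is exactly the paper's one-line proof. The result is the stated formula up to the same $(-1)^{k/2}$ as in (i) and the Atkin--Lehner factor $\lambda$ from \eqref{eqn_2026_01_09_1406}, which the statement of (ii) does not display.
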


\begin{proof}  Let us first assume that we are in the situation of (i). We then have, by definition,
\begin{equation}
    \label{eqn_2026_05_30_1752}
    \kappa\,\circ\, \left<\mathscr{Z}_{\f},\mathscr{Z}_{\f}\right>=\left<\kappa\circ\mathscr{Z}_{\f}\,,\,\kappa\,\circ\,\mathscr{Z}_{\f}\right>_\kappa\,.
\end{equation}
By \eqref{eqn_2026_01_bigheegnermain_1}, 
\begin{equation}
    \label{eqn_2026_05_30_1753}
\left<\kappa\circ\mathscr{Z}_{\f}\,,\,\kappa\,\circ\,\mathscr{Z}_{\f}\right>_\kappa=\dfrac{\left(1-\frac{p^{\frac{w}{2}}}{\alpha(\kappa)}\right)^{2}}{(2\sqrt{-D_K})^{\frac{w}{2}}}\,\left\langle\kappa\circ \mathscr{Z}_{\f},\,\frac{W_{Np}\circ({\rm Pr}^{\alpha(\kappa)})^*({z_{\f_\kappa^\circ}})}{\alpha(\kappa)\lambda_{N_\f}(\f_\kappa^\circ)\mathcal{E}(\f_\kappa)\mathcal{E}^*(\f_\kappa)} \right\rangle_\kappa\,.
\end{equation}
Furthermore, using \eqref{eqn_2026_01_09_1534} and remembering the self-adjoint involution $\lambda_{N_\f}(\f_\kappa^\circ)^{-1}W_{N_\f}$ that is used to identify $V_{\f_{\kappa^\circ}}^\dagger(\mathscr{H}_{\QQ_p}^\vee)$ and $V_{\f_{\kappa^\circ}}^\dagger(\mathscr{H}_{\QQ_p})$  (cf. \S\ref{subsubsec_2026_1_11_1120}), we have
\begin{align}
    \label{eqn_2026_05_30_1754}
    \begin{aligned}
        \left\langle\kappa\circ \mathscr{Z}_{\f},\,\frac{W_{Np}\circ({\rm Pr}^{\alpha(\kappa)})^*({z_{\f_\kappa^\circ}})}{\alpha(\kappa)\lambda_{N_\f}(\f_\kappa^\circ)\mathcal{E}(\f_\kappa)\mathcal{E}^*(\f_\kappa)} \right\rangle_\kappa&=\alpha(\kappa)\cdot\dfrac{h_{\f_\kappa^\circ}\left({\rm Pr}^{\alpha(\kappa)}_*\circ\kappa\circ \mathscr{Z}_{\f},\, W_N(z_{\f_\kappa^\circ}) \right)}{\alpha(\kappa)\lambda_{N_\f}(\f_\kappa^\circ)^2\mathcal{E}(\f_\kappa)\mathcal{E}^*(\f_\kappa)}\\
        &=\dfrac{h_{\f_\kappa^\circ}\left({\rm Pr}^{\alpha(\kappa)}_*\circ\kappa\circ \mathscr{Z}_{\f},\, W_N(z_{\f_\kappa^\circ}) \right)}{\lambda_{N_\f}(\f_\kappa^\circ)^2\mathcal{E}(\f_\kappa)\mathcal{E}^*(\f_\kappa)}\,.
    \end{aligned}
\end{align}
We remark that the multiplier $\alpha(\kappa)$ on the right-hand side of the first row of \eqref{eqn_2026_05_30_1754} is present because of the modifications on the Poincar\'e duality pairings; cf. \S\ref{subsubsec_213_2025_12_09_1556}. We next use \eqref{eqn_2026_01_bigheegnermain_2} to compute the expression on the denominator of the second row of \eqref{eqn_2026_05_30_1754}:
\begin{equation}
    \label{eqn_2026_05_30_1807}
h_{\f_\kappa^\circ}\left({\rm Pr}^{\alpha(\kappa)}_*\circ\kappa\circ \mathscr{Z}_{\f},\, W_N(z_{\f_\kappa^\circ}) \right)=\dfrac{\left(1-\frac{p^{\frac{w}{2}}}{\alpha(\kappa)}\right)^{2}}{(2\sqrt{-D_K})^{\frac{w}{2}}}\,\dfrac{h_{\f_\kappa^\circ}\left(W_N(z_{\f_\kappa^\circ}),\, W_N(z_{\f_\kappa^\circ}) \right)}{\alpha(\kappa)\lambda_{N_\f}(\f_\kappa^\circ)}\,.
\end{equation}
Combining \eqref{eqn_2026_05_30_1753}, \eqref{eqn_2026_05_30_1754}, and \eqref{eqn_2026_05_30_1807}, we conclude that 
\begin{equation}
    \label{eqn_2026_05_30_1812}
\kappa\,\circ\, \left<\mathscr{Z}_{\f},\mathscr{Z}_{\f}\right>=\dfrac{\left(1-\frac{p^{\frac{w}{2}}}{\alpha(\kappa)}\right)^{4}}{(4|D_K|)^{\frac{w}{2}}}\dfrac{h_{\f_\kappa^\circ}\left(W_N(z_{\f_\kappa^\circ}),\, W_N(z_{\f_\kappa^\circ}) \right)}{\alpha(\kappa)\lambda_{N_\f}(\f_\kappa^\circ)^3\mathcal{E}(\f_\kappa)\mathcal{E}^*(\f_\kappa)}\,.
\end{equation}
Finally, using the fact that $\lambda_{N_\f}(\f_\kappa^\circ)^{-1}W_{N_\f}$ is a self-adjoint involution, we observe that
\begin{equation}
    \label{eqn_2026_05_30_1814}
h_{\f_\kappa^\circ}\left(\lambda_{N_\f}(\f_\kappa^\circ)^{-1} W_N(z_{\f_\kappa^\circ}),\, \lambda_{N_\f}(\f_\kappa^\circ)^{-1} W_N(z_{\f_\kappa^\circ}) \right)=h_{\f_\kappa^\circ}\left(z_{\f_\kappa^\circ},\, z_{\f_\kappa^\circ} \right)\,.
    \end{equation}
    The proof of Part (i) follows from \eqref{eqn_2026_05_30_1812} combined with \eqref{eqn_2026_05_30_1814}.

\begin{comment}
    the following chain of equalities:
    \begin{align*}
\kappa\,\circ\, \left<\mathscr{Z}_{\f},\mathscr{Z}_{\f}\right>&=\left<\kappa\circ\mathscr{Z}_{\f}\,,\,\kappa\,\circ\,\mathscr{Z}_{\f}\right>_\kappa\\
&=\dfrac{\left(1-\frac{p^{\frac{w}{2}}}{\alpha(\kappa)}\right)^{2}}{(2\sqrt{-D_K})^{\frac{w}{2}}}\,\left\langle\kappa\circ \mathscr{Z}_{\f},\,\frac{W_{Np}\circ({\rm Pr}^{\alpha(\kappa)})^*({z_{\f_\kappa^\circ}})}{\alpha(\kappa)\lambda_{N_\f}(\f_\kappa^\circ)\mathcal{E}(\f_\kappa)\mathcal{E}^*(\f_\kappa)} \right\rangle_\kappa\\
&=\alpha(\kappa)\cdot\dfrac{\left(1-\frac{p^{\frac{w}{2}}}{\alpha(\kappa)}\right)^{2}}{(2\sqrt{-D_K})^{\frac{w}{2}}}\dfrac{h_{\f_\kappa^\circ}\left({\rm Pr}^{\alpha(\kappa)}_*\circ\kappa\circ \mathscr{Z}_{\f},\, W_N(z_{\f_\kappa^\circ}) \right)}{\alpha(\kappa)\lambda_{N_\f}(\f_\kappa^\circ)^2\mathcal{E}(\f_\kappa)\mathcal{E}^*(\f_\kappa)}\\
&=\dfrac{\left(1-\frac{p^{\frac{w}{2}}}{\alpha(\kappa)}\right)^{4}}{(4|D_K|)^{\frac{w}{2}}}\dfrac{h_{\f_\kappa^\circ}\left(W_N(z_{\f_\kappa^\circ}),\, W_N(z_{\f_\kappa^\circ}) \right)}{\alpha(\kappa)\lambda_{N_\f}(\f_\kappa^\circ)^3\mathcal{E}(\f_\kappa)\mathcal{E}^*(\f_\kappa)}\\
&=\dfrac{\left(1-\frac{p^{\frac{w}{2}}}{\alpha(\kappa)}\right)^{4}}{(4|D_K|)^{\frac{w}{2}}}\dfrac{h_{\f_\kappa^\circ}\left(z_{\f_\kappa^\circ},\, z_{\f_\kappa^\circ} \right)}{\alpha(\kappa)\lambda_{N_\f}(\f_\kappa^\circ)\mathcal{E}(\f_\kappa)\mathcal{E}^*(\f_\kappa)}
\end{align*}
\end{comment} 

For $\kappa$ as in (ii), the proof follows from \eqref{eqn_2026_01_09_1406} combined with \eqref{eqn_2026_01_bigheegnermain_3}.
\end{proof}

\subsubsection{Anticyclotomic variation} One may vary the Heegner classes $\mathscr{Z}_\f$ along the anticyclotomic tower of $K$. This is, in fact, achieved by interpolating generalised Heegner cycles. The following is a restatement (in a vague form that suffices for our purposes) of the constructions in \cite[\S3.3]{howard2007Inventiones} combined with the main results of \cite{castella13} (when $\f$ has slope zero), and \cite[Theorem 5.4.1]{JLZ} (when $\f$ is of arbitrary finite slope).
\begin{proposition}
Under \eqref{item_Heeg}, there exists a non-trivial class 
$$\widetilde{\mathfrak{Z}}_\infty \in H^1_{\rm f}(G_{K,S}, V_\f^\dagger \hatotimes \mathcal{H}^\iota(\Gamma_{\rm ac}))$$ 
that interpolates the generalised Heegner cycles associated to the pairs 
$(\f_\kappa,\chi)$ where $\kappa$ is a classical specialisation of even weight $w\geq 0$ and $\chi$ is a Hecke character of $p$-power conductor and infinity type $(-j,j)$ with $|j|\leq\frac{w}{2}$, and such that $\chi^c=\chi^{-1}$. If the family $\f$ has slope zero, then in fact
$$\widetilde{\mathfrak{Z}}_\infty \in H^1(G_{K,S}, V_\f^\dagger\,\widehat\otimes\,\LL^\iota(\Gamma_\ac))\,.$$
\end{proposition}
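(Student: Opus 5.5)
The statement is essentially a repackaging of known constructions, so the plan is to assemble the class from compatible systems of (generalised) Heegner classes along the anticyclotomic tower. Then we check its Selmer property and interpolation, and finally its non-triviality.

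\textbf{Slope-zero case.} We start with the case where $\f$ is a Hida family. Following \cite[\S3.3]{howard2007Inventiones}, for each $n$ we consider the big Heegner points of conductor $p^n$ on the tower of modular curves $X_1(Np^s)$, with coefficients in the Ohta lattice $V_\f^\dagger$. Their corestrictions to $H_{p^n}$ are norm compatible up to the unit $U_p$; this compatibility comes from the $U_p$-action on Heegner points of $p$-power conductor. After normalising by $\mathbb{a}_p(\f)^{-n}$ (a unit), we obtain an element of $\varprojlim_n H^1(H_{p^n},V_\f^\dagger)$. By Shapiro's lemma this gives a class in $H^1(G_{K,S},V_\f^\dagger\widehat\otimes\LL^\iota(\Gamma_\ac))$; we project to the $\Gamma_\ac^\circ$-part and sum over the $\Delta_\ac$-components. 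The Greenberg Selmer condition at $\p$ and $\p^c$ (and the unramified condition away from $p$) holds at each finite layer by Howard's local analysis, and it passes to the limit. The interpolation property at pairs $(\f_\kappa,\chi)$ is then exactly the main theorem of \cite{castella13}: Howard's classes specialise to generalised Heegner cycles of Bertolini--Darmon--Prasanna, twisted by $\chi$ of infinity type $(-j,j)$ with $|j|\le w/2$. Here one uses Remark~\ref{remark_2026_06_31_0939} to translate the twisting conventions.

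\textbf{Finite-slope case.} For positive slope we instead invoke \cite[Theorem 5.4.1]{JLZ}. There, generalised Heegner classes over $H_{p^n}$ for the Coleman family are built from overconvergent \'etale cohomology, that is, from the sheaves of distributions $\bm{\mathcal L}_\f$ of \S\ref{subsubsection_413}. Renormalised by $\bbalpha^{-n}$, they have denominators growing at most like $p^{n\,\mathfrak{s}_\f}$. We therefore interpret the inverse limit as an element of $H^1$ with coefficients in $\mathcal{H}^\iota(\Gamma_\ac)$ of order $\mathfrak{s}_\f$, via the affinoid description \eqref{eqn_2026_01_20_1037}. The Selmer condition is imposed via the triangulation $\DD^+_\f$ of \S\ref{subsubsec_352_2025_12_17}. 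Checking it amounts to two things. First, the image in $H^1(K_{\p^c},\DD^-_\f\widehat\otimes\cdots)$ vanishes because each specialisation is crystalline (Bloch--Kato). Second, we argue by Zariski density of classical points in $U_\f\times\Spm\mathcal{H}(\Gamma_\ac^\circ)$. Interpolation at the points in the statement is again \cite[Theorem 5.4.1]{JLZ} combined with the comparison \eqref{eqn_BDP_412}.

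\textbf{Non-triviality.} I expect non-triviality to be the main obstacle, since the family statements above are essentially formal. The plan is to use the BDP formula (the explicit reciprocity law for generalised Heegner cycles in \cite{castella13,JLZ}). This law identifies the image of $\widetilde{\mathfrak Z}_\infty$ under the big logarithm $\mathscr{L}_{\omega_\f}$ at $\p$ with the two-variable BDP $p$-adic $L$-function, up to a nonzero factor. That $p$-adic $L$-function is nonzero, because its values at a Zariski-dense set of classical points interpolate central values $L(\f_\kappa\times\chi^{-1},\cdot)$ outside the range of $j$ above. By Hsieh's (or Burungale's) nonvanishing results for anticyclotomic twists, infinitely many of these values are nonzero. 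Hence $\widetilde{\mathfrak Z}_\infty\neq0$. The delicate point is to ensure that the logarithm map is injective and that the interpolation factors do not vanish identically. Injectivity follows from Theorem~7.1.4 of \cite{LZ1}, as recalled above Definition~\ref{defn_2_var_log}. Non-vanishing of the factors follows from non-criticality \eqref{item_Reg}.
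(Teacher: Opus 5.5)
Your plan matches the paper, which gives no independent proof of this proposition. It simply cites Howard's construction \cite[\S3.3]{howard2007Inventiones} together with \cite{castella13} in the slope-zero case, and \cite[Theorem 5.4.1]{JLZ} in finite slope, exactly as you do. Your non-triviality argument via the BDP reciprocity law (Theorem~\ref{reciprocity_Z_inf}) and non-vanishing of the BDP $p$-adic $L$-function supplies what the paper leaves implicit, with two small corrections. The Selmer condition must be verified at $\p$ as well as $\p^c$, since the condition at $\p$ is what makes $\mathscr{L}_{\omega_\f}\circ{\rm res}_\p$ defined. Injectivity of the logarithm plays no role, because a nonzero image already forces $\widetilde{\mathfrak{Z}}_\infty\neq 0$.
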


Let $\mathfrak{Z}_\infty^\tau \in  H^1_{\rm f}(G_{K,S}, V_\f^\dagger \hatotimes \mathcal{H}^\iota_\tau(\Gamma_{\rm ac}^\circ)) $ be the image of $\widetilde{\mathfrak{Z}}_\infty$ under the map $\mathcal{H}(\Gamma_{\rm ac}) \twoheadrightarrow \mathcal{H}_\tau(\Gamma^\circ_{\rm ac})$, and let us write $\mathfrak{Z}_\infty$ in the case when $\tau=\mathds{1}$, then $\mathfrak{Z}_\infty$ specialises to the class $\mathscr{Z}_\f$ of Theorem~\ref{thm_bigheegnermain}  under the augmentation morphism $\mathcal{H}(\Gamma_{\rm ac}) \twoheadrightarrow \mathcal{H}(\Gamma^\circ_\ac)\to \ZZ_p$.

\subsubsection{Reciprocity law for $\mathfrak{Z}_\infty^\tau$}
Let us consider the $2$-variable $p$-adic L-function $\widetilde{\mathfrak{L}}_\fp(\Bf) \in \Lambda_\Bf \hatotimes \mathcal{H}(\Gamma_{\rm ac})$, which is constructed in  \cite[Theorem 7.3.3]{JLZ} (denoted by $L_\p^{\rm BDP}(\mathfrak{N}, \varepsilon)$ in op. cit.). We write \[\mathfrak{L}_{\fp}^\tau(\Bf) \in \Lambda_\Bf \hatotimes \mathcal{H}_{\tau}(\Gamma^\circ_{\rm ac}) \simeq \Lambda_\Bf \hatotimes \mathcal{H}(\Gamma^\circ_{\rm ac})\] for the image of $\widetilde{\mathfrak{L}}_{\fp}$ under the projection $\mathcal{H}(\Gamma_{\rm ac}) \twoheadrightarrow \mathcal{H}_{\tau}(\Gamma^\circ_{\rm ac})$.   
\begin{theorem}[Reciprocity law for $\mathfrak{Z}^\tau_\infty$] \label{reciprocity_Z_inf}
    The following equality holds in $\Lambda_\Bf \hatotimes \mathcal{H}(\Gamma^\circ_{\rm ac}):$
    \begin{align}
        \mathscr{L}_{\omega_\Bf}^{\tau}({\rm res}_\fp(\mathfrak{Z}^{\tau}_\infty)) = (-1)^{\frac{\Bk}{2}+\Bj}\mathfrak{L}_{\fp}^ {\tau}(\Bf)\,, \notag
    \end{align}
    where the universal anticyclotomic character $\Gamma^\circ_{\ac} \hookrightarrow (\Lambda_\f \hatotimes \mathcal{H}_\tau(\Gamma^\circ_\ac))^\times $ is considered to have ``infinity type'' $(\Bj, -\Bj)$.
 \end{theorem}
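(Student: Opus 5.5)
The plan is to deduce Theorem~\ref{reciprocity_Z_inf} from the explicit reciprocity law for big generalised Heegner classes of Jetchev--Loeffler--Zerbes, \cite[Theorem 7.3.3]{JLZ} (and \cite{castella13} in the slope-zero case). The work lies in matching their normalisations with the objects $\mathscr{L}^{\tau}_{\omega_\Bf}$, $\mathfrak{Log}^{\rm ac}_{+-}$ and $\mathfrak{L}^\tau_\fp(\Bf)$ fixed above.

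\textbf{Reduction to the untwisted class.} First, $\widetilde{\mathfrak{Z}}_\infty$ lies in the Selmer group cut out by the $\fp$-local condition $\DD^{\dagger+}_\Bf$. Concretely, \cite[Theorem 5.4.1 and \S5.3]{JLZ} show that the image of ${\rm res}_\fp(\widetilde{\mathfrak{Z}}_\infty)$ in $H^1(K_\fp,\DD^{\dagger-}_\Bf\hatotimes \bD^\dagger_{\rm rig}(\mathcal{H}^\iota(\Gamma_{\rm ac})))$ vanishes. Since $H^0(K_\fp,\DD^{\dagger-}_\Bf\hatotimes\cdots)=0$ generically, ${\rm res}_\fp(\widetilde{\mathfrak{Z}}_\infty)$ lifts uniquely to $H^1(K_\fp,\DD^{\dagger+}_\Bf\hatotimes\bD^\dagger_{\rm rig}(\mathcal{H}^\iota(\Gamma_{\rm ac})))$. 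By Remark~\ref{remark_embedding_into_ac}, $\mathfrak{Log}^{\rm ac}_{+-}$ is defined on this whole module and agrees with the anticyclotomic big logarithm of \cite{JLZ}. Projecting to the $\tau$-isotypic component via $e_\tau$ is compatible with both sides, since $\mathfrak{L}^\tau_\fp(\Bf)$ and $\mathfrak{Z}^\tau_\infty$ are defined as the same projections. It therefore suffices to prove the identity
\[
\langle \mathfrak{Log}^{\rm ac}_{+-}({\rm res}_\fp \widetilde{\mathfrak{Z}}_\infty),\omega_\Bf\rangle_\DD=(-1)^{\frac{\Bk}{2}+\Bj}\widetilde{\mathfrak{L}}_\fp(\Bf)
\]
in $\Lambda_\Bf\hatotimes\mathcal{H}(\Gamma_{\rm ac})$.

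\textbf{Verification at a Zariski-dense set of points.} Both sides lie in $\Lambda_\Bf\hatotimes\mathcal{H}(\Gamma_{\rm ac})$. I would compare them at the dense set of pairs $(\kappa,\chi)$ where $\kappa$ is crystalline classical of weight $w$ and $\chi$ is anticyclotomic of infinity type $(-j,j)$ with $|j|>w/2$, i.e.\ outside the Heegner range, where the BDP $L$-function interpolates critical values.
\begin{itemize}
\item[(a)] The interpolation property of $\mathfrak{Log}^{\rm ac}_{+-}$ (\cite[Theorem 7.1.4]{LZ1}, inherited from \eqref{equation_Logarithm_Nakamura}) expresses the specialisation of the left side as an Euler factor times the Bloch--Kato dual exponential $\exp^*$ paired with $\omega_{\f_\kappa}$. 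Here the sign $(-1)^{\frac{\Bk}{2}+\Bj}$ appears, via Remark~\ref{remark_2026_06_31_0939}, from the twist by the character of infinity type $(\frac{k}{2},-\frac{k}{2})$ together with the factorial/Gamma normalisation of Perrin-Riou's map.
\item[(b)] At these points the JLZ/BDP reciprocity law identifies that $\exp^*$ value with the value of $L^{\rm BDP}_\fp$, with the same Euler factor.
\end{itemize}
The two agree on a dense set, and the ring is reduced, so equality holds identically. Equivalently, one may just transport \cite[Theorem 7.3.3]{JLZ} through the reparametrisation of Remark~\ref{remark_2026_06_31_0939}.

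\textbf{Main obstacle.} The main obstacle is bookkeeping, not new mathematics. One must check that the basis $\omega_\Bf$ from the overconvergent Eichler--Shimura map matches the differential used in \cite{JLZ}, up to the self-dual twist $\Bk/2$. One must also check that the identification $\gamma_-\mapsto\gamma_\fp^{1/2}$ and the $\iota$-involution in \eqref{eq_ses_gamma_gamma_gammaac} correspond to infinity type $(\Bj,-\Bj)$ rather than $(-\Bj,\Bj)$, since this choice fixes the sign. I would settle both by specialising to a single point with known explicit formulas, e.g.\ $w=0$ and $\chi$ of type $(-1,1)$, and comparing with \cite[Theorem 5.13]{bertolinidarmonprasanna13}.
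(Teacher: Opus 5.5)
Your core strategy is the paper's. Its entire proof is the citation ``\cite[Theorem 8.2.4]{JLZ}, see also Remark~\ref{remark_embedding_into_ac}'': one transports the Jetchev--Loeffler--Zerbes explicit reciprocity law, using that $\mathfrak{Log}^{\rm ac}_{+-}$ agrees with their anticyclotomic big logarithm and projecting to the $\tau$-component, exactly as in your first step. One correction to the citation. The reciprocity law is \cite[Theorem 8.2.4]{JLZ}, whereas Theorem 7.3.3 of op.\ cit.\ is the construction of $\widetilde{\mathfrak{L}}_\fp(\Bf)$ itself. ``Transporting Theorem 7.3.3'' therefore only gives you the $L$-function, not the identity.

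The more serious problem is that your ``verification at a Zariski-dense set of points'' is circular as written, so it cannot stand as an independent argument.
\begin{itemize}
\item At points $(\kappa,\chi)$ with $|j|>w/2$, the specialisation of $\widetilde{\mathfrak{Z}}_\infty$ is not a generalised Heegner class; those exist only for $|j|\le w/2$. Nothing other than the reciprocity law itself computes its dual exponential there, so your step (b) is just the specialisation of the statement you are trying to prove. With \cite[Theorem 8.2.4]{JLZ} cited, the density paragraph is superfluous; without it, it has no input.
\item A genuine density argument, which is the standard way such laws are established, must use the Heegner range $|j|\le w/2$. There the class specialises, by the interpolation property of $\widetilde{\mathfrak{Z}}_\infty$ (\cite[Theorem 5.4.1]{JLZ}), to an explicit multiple of a generalised Heegner class. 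The big logarithm specialises to an Euler factor times its Bloch--Kato logarithm, not $\exp^*$. The $p$-adic Waldspurger formula (\cite[Theorem 5.13]{bertolinidarmonprasanna13}, in the normalisation of \cite{JLZ}) evaluates $\mathfrak{L}_\fp$ at the same point. Such points are Zariski dense once $w$ varies along the family.
\item For the same reason, your proposed sign check at $w=0$ with $\chi$ of type $(-1,1)$ does not work. For $w=0$ the Heegner range consists only of $j=0$, so \cite[Theorem 5.13]{bertolinidarmonprasanna13} does not apply at that point. You would need a point with $|j|\le w/2$, such as $w=0$, $j=0$ (classical Heegner points), or a higher-weight point.
\end{itemize}
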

\begin{proof}
This is \cite[Theorem 8.2.4]{JLZ}, see also Remark \ref{remark_embedding_into_ac}.
\end{proof}

\section{\texorpdfstring{The $p$-adic Gross--Zagier formula}{The p-adic Gross--Zagier formula}}
\label{sec_main_GZ}
For the rest of this section we assume that $f \in S_{k+2}(\Gamma_0(N))$ for some integer $N\geq 5$. Let $\alpha, \beta$ denote the (distinct by assumption) roots of the Hecke polynomial of $f$, with $\text{ord}_p(\alpha)< {\rm ord}_p(\beta)$. If $p\nmid N$ (resp. $p\mid \mid N$) we let $f_\alpha = f(q) - \beta \cdot f(q^p)$ denote the $p$-stabilisation of $f$ at $\alpha$ (resp. let $f_\alpha = f$). Let $\Bf$ be the Coleman (or Hida) family passing through $f_\alpha$ and assume \eqref{item_Heeg} holds.
\subsection{A Rubin-style formula}
\label{subsec_2026_02_06_0827}
Our main goal in this subsection is to compute, \`a la Rubin, the $p$-adic heights of Beilinson--Flach elements. For the purposes of \S\ref{subsec_2026_02_06_0827}, the weaker assumption \eqref{item_Sign} suffices in place of \eqref{item_Heeg}. 
\begin{defn}
    For $\bullet = \{\}, \, c$,  let $$\bD^\dagger_{\rm rig}(\mathcal{H}_\tau(\bm{\Gamma})) \in \{\bD^\dagger_{\rm rig}(\mathcal{H}_\tau^\iota(\Gamma^\circ_{\rm ac}), \, \bD^\dagger_{\rm rig}(\mathcal{H}_\tau^\sharp(\Gamma^\circ_\fp)^\bullet \hatotimes \bD^\dagger_{\rm rig}(\mathcal{H}^\iota(\Gamma^\circ_{\rm cyc}) \}.$$
        We will write
        \begin{align}
            \pi_{-}^{(\fp)}: H^1\left(K_\fp, \Ddagrigf(V^\dagger_\Bf) \hatotimes \bD^\dagger_{\rm rig}(\mathcal{H}_\tau(\bm{\Gamma})) \right) \lra H^1 \left(K_\fp, \DD^{\dagger-}_\Bf \hatotimes \bD^\dagger_{\rm rig}(\mathcal{H}_\tau(\bm{\Gamma})) \right), \notag
        \end{align}
        for the natural map arising from the projection $\Ddagrigf(V^\dagger_\Bf) \twoheadrightarrow \DD^{\dagger-}_\Bf$.
    %\end{enumerate}
\end{defn}
\begin{lemma}\label{lemma_BF_vanish}
    $_c{\rm BF}^{\tau}_{\rm ac, \Bf} \in H^1_{\rm f}(G_{K,S},V^\dagger_\Bf\hatotimes \mathcal{H}_\tau^\iota(\Gamma^\circ_{\rm ac}))$.
\end{lemma}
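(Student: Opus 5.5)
We have to show that ${}_c{\rm BF}^{\tau}_{\rm ac,\Bf}$ satisfies the Greenberg/Bloch--Kato local conditions. Here $H^1_{\rm f}(G_{K,S},-)$ means the cohomology of the Selmer complex attached to the triangulation $\DD^{\dagger+}_\Bf$ at both $\fp$ and $\fp^c$, together with unramified conditions away from $p$. By the exact sequence \eqref{eqn_2025_12_13_1653}, it is enough to check three things: the class is unramified (up to a harmless subspace) at the primes $\lambda\nmid p$; its image under $\pi_-^{(\fp)}$ vanishes; and the analogous image at $\fp^c$ vanishes.

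\emph{Away from $p$.} I would apply the argument of Corollary~\ref{cor_thm_RSformula_cyclo_height_2025_12_14} and Remark~\ref{rem_2025_12_14_1222}. At a dense set of classical specialisations, $H^0(K_\lambda,-)$ vanishes by local--global compatibility. It follows that $H^1(K_\lambda,-)$ has trivial unramified quotient issues, so the local condition is automatic; the same then holds in the family.

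\emph{At $\fp^c$.} Write $\DD^{\pm}_{\g_\tau}$ for the pieces of the triangulation of the $\g_\tau$-part. Under Shapiro's lemma, the restriction of ${}_c\mathbf{BF}^\dagger(\Bf\otimes\g_\tau)$ to $\QQ_p$ decomposes along $V_{\g_\tau}|_{G_{\QQ_p}}=\mathcal F^+V_{\g_\tau}\oplus\mathcal F^-V_{\g_\tau}$, using \eqref{equation_Induced_Rep_isom} and \eqref{equation_identifying_F+-_with_Lambda_p}. The $\mathcal F^-V_{\g_\tau}$ component corresponds to localisation at $\fp^c$, i.e.\ the conjugate embedding.
\begin{itemize}
\item Proposition~\ref{proposition_BF_properties}(c) places the class in the balanced Selmer group.
\item Therefore its image in $H^1(\QQ_p,\DD^{\dagger-}_\Bf\hatotimes\DD^-_{\g_\tau}\hatotimes\cdots)$ vanishes.
\item Via \eqref{--=fp--}, this is precisely $\pi_-^{(\fp^c)}$ of the $\fp^c$-localisation.
\end{itemize}
Passing modulo $\gamma_+-1$, using the injectivity in Remark~\ref{remark_embedding_into_ac}, gives the $\fp^c$ condition for the anticyclotomic class.

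\emph{At $\fp$ (main obstacle).} Here balancedness gives nothing, since $\DD^{\dagger-}_\Bf\hatotimes\DD^+_{\g_\tau}$ lies in the balanced submodule. Its image ${}_c{\rm BF}^{\tau,-}_{\fp}$ is computed by the first reciprocity law, Corollary~\ref{corollary_reciprocity_for_anticyclotomic_BF}(i):
$$\mathscr L^\tau_{\eta_\Bf}\big({}_c{\rm BF}^{\tau,-}_{\rm ac,\fp}\big)=\mathscr C_{\rm ac}(\Bf)\,\mathfrak L^{\Bf}_{{\rm ac},0}(\Bf,\tau).$$
I would show that $\mathfrak L^{\Bf}_{{\rm ac},0}(\Bf,\tau)=0$ identically. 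Its interpolation range consists of $\f_\kappa\times\g_\kappa'$ at the central point with $\f$ dominant. The sign condition \eqref{item_Sign} (root number $-1$ for the base change to $K$, uniformly on the anticyclotomic line in this region) forces all these interpolated central values to vanish. Zariski density of such points then kills the restriction to $\gamma_+=1$.

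Once $\mathfrak L^{\Bf}_{{\rm ac},0}(\Bf,\tau)=0$, I conclude as follows:
\begin{itemize}
\item $\mathscr L^\tau_{\eta_\Bf}$ is injective, being built from the injective $\mathfrak{Log}^{\rm ac}_{-+}$ paired with the generator $\eta_\Bf$ of the rank-one module $\DD^{+}$ dual.
\item Hence ${}_c{\rm BF}^{\tau,-}_{\rm ac,\fp}=0$.
\item Since this class equals $\pi_-^{(\fp)}({\rm res}_\fp\,{}_c{\rm BF}^\tau_{\rm ac,\Bf})$ by construction, the local condition at $\fp$ holds.
\end{itemize}

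I expect the hardest step to be justifying this vanishing of the $\f$-dominant $p$-adic $L$-function on the whole anticyclotomic line. The difficulty is checking that every interpolation point has sign $-1$ and that the interpolation formula really produces the central value. A subsidiary point is that the $H^0$ terms in \eqref{eqn_2025_12_13_1653} do not obstruct lifting to the Selmer complex; they vanish because $V_\Bf$ is generically irreducible at $p$-adic specialisations.
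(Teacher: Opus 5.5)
Your proposal is correct and is essentially the paper's proof. Balancedness (Proposition~\ref{proposition_BF_properties}(c)) together with \eqref{--=fp--} settles the condition at $\fp^c$. At $\fp$, the first reciprocity law (Corollary~\ref{corollary_reciprocity_for_anticyclotomic_BF}(i)), the vanishing of $\mathfrak{L}^{\Bf}_{{\rm ac},0}(\Bf,\tau)$ forced by \eqref{item_Sign} on a dense set of specialisations, and the injectivity of $\mathscr{L}^\tau_{\eta_\Bf}$ together give ${}_c{\rm BF}^{\tau,-}_{\rm ac,\fp}=0$.

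Your separate argument away from $p$ is not needed: the balanced Selmer group is already the cohomology of a Selmer complex that imposes the unramified conditions at $\lambda\in S\setminus S_p$. Likewise, the $H^0$ terms in \eqref{eqn_2025_12_13_1653} affect only the uniqueness of the lift to the Selmer complex, not its existence.
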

\begin{proof}
    By Proposition \ref{proposition_BF_properties}(c) and the identification \eqref{--=fp--}, it suffices to show that 
    $${_c{\rm BF}^{\tau,-}_{\rm ac, \fp}}:=\pi_{-}^{(\fp)}(\rm{res}_\fp(_c{\rm BF}^{\tau}_{\rm ac, \Bf}))=0\,,$$
where ${_c{\rm BF}^{\tau,-}_{\rm ac, \fp}}$ is the class introduced in Definition~\ref{defn_2026_02_04_1426}. This follows by Corollary \ref{corollary_reciprocity_for_anticyclotomic_BF}(i) and \eqref{item_Sign}. Indeed, we first note that $\mathfrak{L}_{\rm ac, 0}^\Bf(\Bf_\kappa,\tau)$ is trivial for a dense set of specialisations $\kappa$ by \eqref{item_Sign} and the interpolation property of the $p$-adic $L$-function (cf. \cite[Theorem 2.7.4]{KLZ2}). This shows that $\mathfrak{L}_{\rm ac, 0}^\Bf(\Bf, \tau) = 0$, and the result follows since the map $\mathscr{L}^\tau_{\eta_\Bf}$ is injective.
    \end{proof}
\begin{remark} \label{remark_simpler_description_of_BF^+_p}
\normalfont
    Recall the isomorphisms \[\Gamma^\circ_{\rm cyc} \times \Gamma^\circ_\fp \simeq \Gamma^+\times \Gamma^- \simeq \Gamma^\circ_{\rm cyc} \times \Gamma^\circ_{\fp^c},\]
    from \S \ref{subsec_Iwasawa_Algebras}, which induce the isomorphisms
    \[\mathcal{H}_{\tau^c}^{\sharp}(\Gamma^\circ_\fp) \hatotimes \mathcal{H}^\iota(\Gamma^\circ_{\rm cyc})/(\gamma^+-1) \simeq \mathcal{H}_{\tau^{-1}}^\iota(\Gamma^\circ_{\rm ac}) \simeq \mathcal{H}^\sharp_{\tau}(\Gamma^\circ_\fp)^c \hatotimes 
    \mathcal{H}^\iota(\Gamma^\circ_{\rm cyc}) /(\gamma^+-1).\]
    In particular, we have that 
    \begin{align*}
        &{\rm im}\left(H^1\left(K_{\fp}, \DD^{\dagger+}_\Bf \hatotimes\bD^\dagger_{\rm rig}(\mathcal{H}^{\sharp}_{\tau}(\Gamma^\circ_\fp)^c)\hatotimes \bD^\dagger_{\rm rig}(\mathcal{H}^\iota(\Gamma^\circ_{\rm cyc})) \right)/(\gamma^+-1) \hookrightarrow H^1 \left (K_\fp, \DD^{\dagger+}_\Bf \hatotimes \bD^\dagger_{\rm rig}( \mathcal{H}_{\tau^{-1}}^\iota(\Gamma^\circ_{\rm ac}))\right) \right) \notag \\
        & = \ {\rm im}\left(H^1 \left(K_\fp, \DD^{\dagger+}_\f \hatotimes \bD^\dagger_{\rm rig}(\mathcal{H}^\sharp_{\tau^c}(\Gamma^\circ_\p)) \hatotimes \bD^\dagger_{\rm rig}(\mathcal{H}^\iota(\Gamma^\circ_{\rm cyc}))\right)/(\gamma^+-1)  \hookrightarrow H^1 \left(K_\fp, \DD^{\dagger+}_\Bf \hatotimes \bD^\dagger_{\rm rig}(\mathcal{H}_{\tau^{-1}}^\iota(\Gamma^\circ_{\rm ac}))\right) \right)\,.
    \end{align*} 
    Since ${_c{\rm BF}^{\tau^{-1},-}
    _{\rm ac, \fp}} = 0$, and recalling that $\tau \in \widehat{\Delta}_{\ac}$ satisfies $ \tau^c = \tau^{-1}$, we have the equality 
    \[_c{\rm BF}^{\tau^{-1}, +}_{\rm ac, \fp} ={\rm res}_\fp({_c{\rm BF}^{\tau^{-1}}_{\rm ac, \Bf}})\,,\]
 where ${_c{\rm BF}^{\tau^{-1},+}_{\rm ac, \fp}}$ is given as in Definition~\ref{defn_2026_02_04_1426}.  \hfill $\triangle$
    \end{remark}

We recall the fixed a topological generator $\gamma_\cyc$ of $\Gamma^\circ_\cyc$.
\begin{lemma}
\label{lemma_2026_05_28_0843}
    There exists an element 
    $$\mathfrak{d}^{(\fp)}_{\rm{cyc}} {\rm{\bf BF}}^\tau_{\Bf} \in H^1\left(K_\fp, \DD^{\dagger-}_\Bf\hatotimes \bD^\dagger_{\rm rig}(\mathcal{H}_\tau^{\sharp}(\Gamma^\circ_\fp)) \hatotimes \bD^\dagger_{\rm rig}(\mathcal{H}^\iota(\Gamma^\circ_{\rm cyc})) \right)$$ 
    satisfying the equality
    \[\pi_{-}^{(\fp)}({\rm res}_\fp(_c{\rm{\bf BF}}^\tau_{\Bf})) = \frac{\gamma_{\rm cyc}-1}{\log_p(\chi_{\rm cyc}(\gamma_{\rm cyc}))} \cdot \mathfrak{d}^{(\fp)}_{\rm{cyc}} {\rm{\bf BF}}^\tau_{\Bf}\]
    in $H^1\left(K_\fp, \DD^{^\dagger -}_\Bf\otimes \bD^\dagger_{\rm rig}(\mathcal{H}_\tau^{\sharp}(\Gamma^\circ_{\fp})) \hatotimes \bD^\dagger_{\rm rig}(\mathcal{H}^\iota(\Gamma^\circ_{\rm cyc})) \right)$.\\
\end{lemma}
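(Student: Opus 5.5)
The plan is to deduce the divisibility by $\gamma_{\rm cyc}-1$ from the vanishing of the anticyclotomic restriction established in Lemma~\ref{lemma_BF_vanish}, together with a torsion-freeness statement for the relevant local Iwasawa cohomology. Put $X:=\pi_{-}^{(\fp)}({\rm res}_\fp({}_c{\rm\bf BF}^\tau_\Bf))$, an element of
$$M:=H^1\left(K_\fp, \DD^{\dagger-}_\Bf\hatotimes \bD^\dagger_{\rm rig}(\mathcal{H}_\tau^{\sharp}(\Gamma^\circ_\fp)) \hatotimes \bD^\dagger_{\rm rig}(\mathcal{H}^\iota(\Gamma^\circ_{\rm cyc})) \right).$$
By Definition~\ref{defn_2026_02_04_1426} and the proof of Lemma~\ref{lemma_BF_vanish}, the image of $X$ in $M/(\gamma_+-1)M$, and then in the anticyclotomic cohomology via the embedding of Remark~\ref{remark_embedding_into_ac}, is ${}_c{\rm BF}^{\tau,-}_{\rm ac,\fp}=0$. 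Because that embedding is injective, $X\in(\gamma_+-1)M$; write $X=(\gamma_+-1)Y$.

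The next step passes from $\gamma_+-1$ to $\gamma_{\rm cyc}-1$. Under \eqref{eq_isom_+_-} and \eqref{eq_isom_star_bullet}, $\gamma_+$ corresponds to $(\gamma_{\rm cyc},\gamma_\fp^{p^{h_p}/2})$. In $\mathcal{H}_\tau^\sharp(\Gamma^\circ_\fp)\hatotimes\mathcal{H}^\iota(\Gamma^\circ_{\rm cyc})$, this element generates the kernel of the map to $\mathcal{H}^\iota_{\tau}(\Gamma^\circ_{\rm ac})$ in \eqref{eq_ses_gamma_gamma_gammaac_first}. I would fix the coordinates used in the Taylor expansion \eqref{eq_expansion_of_l_function}, namely the cyclotomic variable $\gamma_{\rm cyc}-1$ relative to the anticyclotomic variable. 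With this choice, the ideal generated by $\gamma_+-1$ agrees with the one generated by $\gamma_{\rm cyc}-1$, up to a unit. Concretely, the identification $\Lambda(\Gamma^\circ)\simeq\Lambda(\Gamma^\circ_{\rm cyc})\hatotimes\Lambda(\Gamma^\circ_{\rm ac})$ sends $\gamma_+\mapsto\gamma_{\rm cyc}$. So I would set
$$\mathfrak{d}^{(\fp)}_{\rm cyc}{\rm\bf BF}^\tau_\Bf:=\log_p(\chi_{\rm cyc}(\gamma_{\rm cyc}))\cdot u\cdot Y,$$
where $u$ is the unit relating the two generators, and $u=1$ in these coordinates.

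The main obstacle is the injectivity of the map $M/(\gamma_+-1)M\to H^1(K_\fp,\DD^{\dagger-}_\Bf\hatotimes\bD^\dagger_{\rm rig}(\mathcal{H}^\iota_\tau(\Gamma^\circ_{\rm ac})))$. This amounts to the long exact sequence attached to \eqref{eq_ses_gamma_gamma_gammaac_first}, tensored with the rank-one $(\varphi,\Gamma)$-module $\DD^{\dagger-}_\Bf$. That sequence is exact on the left because $\gamma_+-1$ is a non-zero-divisor on the coefficient module, which is a rank-one module over the relative Robba ring with no torsion. The injectivity itself is asserted in Remark~\ref{remark_embedding_into_ac}, so it can be quoted.

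For uniqueness, one needs $H^1$ of the family to be $(\gamma_+-1)$-torsion free. This holds once $H^0(K_\fp,\DD^{\dagger-}_\Bf\hatotimes\bD^\dagger_{\rm rig}(\mathcal{H}^\iota_\tau(\Gamma^\circ_{\rm ac})))=0$. That vanishing I would check by noting that $\widetilde\delta_\Bf$ (unramified, $\widetilde\delta_\Bf(p)=a_p(\Bf)$, twisted by $\Bk/2$) times a universal anticyclotomic character has no invariants generically. I would also verify this on the level of each $\mathcal{H}_n$ affinoid, and then pass to the inverse limit using Mittag-Leffler on the compact transition maps.
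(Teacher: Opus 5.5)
Your argument is correct, and it is a hands-on version of what the paper does by citation. The paper deduces the lemma from Theorem~\ref{thm_RSformula_cyclo_height_2025_12_14}(i), applied to the Selmer class $[z_{\rm f}]={}_c{\rm BF}^\tau_{\rm ac,\Bf}$ (Selmer by Lemma~\ref{lemma_BF_vanish}) with lift $[\widetilde z]={}_c{\rm\bf BF}^\tau_\Bf$. There, the triangle \eqref{eqn_selmersequence_2025_12_14} and the Bockstein sequence produce $[D_\cyc\widetilde z]$, and its $\fp$-component, transported by \eqref{eqn_identifyingsungularquotients_2025_12}, is the derived class. You instead bypass the Selmer-complex formalism and argue purely locally at $\fp$. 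You use the same input, namely the vanishing of ${}_c{\rm BF}^{\tau,-}_{\rm ac,\fp}$, which is exactly what the proof of Lemma~\ref{lemma_BF_vanish} establishes. Combined with the long exact sequence for multiplication by $\gamma_+-1$ coming from \eqref{eq_ses_gamma_gamma_gammaac_first}, this gives divisibility directly in the two-variable module $M$.

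Your route proves the identity literally where the lemma places it, in the full two-variable cohomology. By contrast, Theorem~\ref{thm_RSformula_cyclo_height_2025_12_14}(i) as formulated only sees the first-order deformation $V_\epsilon=\mathbb{V}/J^2\mathbb{V}$, and so yields the derivative only in the anticyclotomic cohomology. The paper's route, in turn, gives a class that is by construction the one entering Corollary~\ref{cor_thm_RSformula_cyclo_height_2025_12_15}, and hence Proposition~\ref{Rubin_formula}.

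To connect the two, reduce $X=(\gamma_+-1)Y$ modulo $(\gamma_+-1)^2$. This shows that the image of $\log_p(\chi_\cyc(\gamma_\cyc))\,Y$ satisfies the first identity in \eqref{eqn_2025_12_14_1203} at $\fp$. Then use precisely your $H^0$-vanishing to get uniqueness of that local derivative, so the two classes agree.

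Finally, your handling of $\gamma_\cyc$ versus $\gamma_+$ is a matter of reading the notation, not a unit computation. In the $(\cyc,\fp)$-coordinates in which $M$ is written, $\gamma_+$ corresponds to $\gamma_\cyc\gamma_\fp^{p^{h_p}/2}$, so $\gamma_+-1$ and $\gamma_\cyc\otimes 1-1$ are not associates. The lemma is only meaningful (and only used) with $\gamma_\cyc$ meaning the cyclotomic coordinate $\gamma_+$ of the $(\cyc,\ac)$-decomposition, which is how you take it.
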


We remark that, thanks to the denominator $\log_p(\chi_{\rm cyc}(\gamma_{\rm cyc}))$ in the statement above, the derived class $\mathfrak{d}^{(\fp)}_{\rm{cyc}} {\rm{\bf BF}}^\tau_{\Bf}$ given as above does not depend on the choice of $\gamma_{\rm cyc}$.
Similar discussion also applies in \eqref{eq_expansion_of_l_function} above, pertaining to the definition of $\mathfrak{L}^{\Bh}_{{\rm ac},1}(\Bf,\tau)$.

\begin{proof}[Proof of Lemma~\ref{lemma_2026_05_28_0843}]
    This is a special case of Theorem \ref{thm_RSformula_cyclo_height_2025_12_14}(i), with $[z_f]$ the element corresponding to $_c\rm{BF}^\tau_{{\rm ac},\Bf}$ and $[\widetilde{z}] = {_c{\rm{\bf BF}}_\Bf^\tau}$.
\end{proof}

\begin{defn}
\label{def_Tate_pairing}
    Let $D$ denote some $(\varphi, \Gamma_{\rm cyc})$--module over $\mathcal{R}_A$ for some affinoid algebra $A$, and let $D^*(1) := \Hom(D, \bD^\dagger_{\rm rig}(A)(1))$.
    \item[i)] We define the cyclotomic Tate pairing 
    \begin{align}
    \begin{aligned}
    \label{eqn_2026_01_24_1009}
                \langle\,,\,\rangle_n \,:\, H^1_{\rm Iw} \left(K_{\fp, \infty}, D \hatotimes \bD^\dagger_{\rm rig}(\mathcal{H}^\sharp_{n, \tau}(\Gamma^\circ_{\p}))  \right)\, &\otimes_{\Lambda_\Bf}\, H^1_{\rm Iw} \left(K_{\fp, \infty}, D^*(1) \hatotimes \bD^\dagger_{\rm rig}(\mathcal{H}^\iota_{n, \tau^{-1}}(\Gamma^\circ_{\p})) \right)  \\
        &\hspace{4.2cm}\lra \Lambda_\Bf \hatotimes \mathcal{H}_n(\Gamma^\circ_\p) \hatotimes \mathcal{H}(\Gamma^\circ_{\cyc})
            \end{aligned}
    \end{align}
    as in \cite[Definition 4.2.8]{KPX}, for $M = D \hatotimes \bD^\dagger_{\rm rig}(\mathcal{H}^\sharp_{n,\tau}(\Gamma^\circ_\p))$.
    \item[ii)] Let  us denote by
    \begin{align*}
        \langle\,,\,\rangle^{\rm ac}_{\rm Tate} \,:\, H^1_{\rm Iw} \left(K_{\fp},D \hatotimes \bD^\dagger_{\rm rig}(\mathcal{H}^\sharp_{\tau}(\Gamma^\circ_{\p})) \right)/(\gamma^+-1) \,&\otimes_{\Lambda_\Bf}\, H^1_{\rm Iw} \left(K_{\fp},D^*(1) \hatotimes \bD^\dagger_{\rm rig}(\mathcal{H}^\iota_{\tau^{-1}}(\Gamma^\circ_{\p})) \right)/(\gamma^+-1) \notag \\
        &\hspace{5.2cm}\lra \Lambda_\Bf \hatotimes \mathcal{H}(\Gamma^\circ_{\rm ac}) \notag
    \end{align*}
     the ``anticyclotomic'' Tate pairing, which is the inverse limit $($over $n$$)$ of the pairings \eqref{eqn_2026_01_24_1009} modulo $(\gamma^+-1)$.
\end{defn}
\begin{remark}\label{remark_self_dual}
\normalfont
    \item[i)] We will primarily be interested in the case when $D= \DD^{\dagger-}_{\f}$. Note that $(\DD^{\dagger-}_\f)^*(1) \simeq \DD^{\dagger+}_\f$ since ${V_\Bf^\dagger}$ is self-dual.
    \item [ii)] Reasoning as in Remark \ref{remark_embedding_into_ac}, we may view $\langle\,,\,\rangle^{\rm ac}_{\rm Tate}$ as a morphism on an appropriate submodule of 
    $$H^1 \left(K_{\fp},D \hatotimes \bD^\dagger_{\rm rig}(\mathcal{H}^\iota_{\tau}(\Gamma^\circ_{\ac})) \right) \,\otimes_{\Lambda_\Bf}\, H^1 \left(K_{\fp},D^*(1) \hatotimes \bD^\dagger_{\rm rig}(\mathcal{H}^\iota_{\tau^{-1}}(\Gamma^\circ_{\ac})) \right)\,.$$    
\end{remark}
Since $\mathfrak{Log}^\Gamma_{?}$ interpolates Nakamura's logarithm and dual exponential maps, we have the following:

\begin{lemma}\label{lemma_comparing_pairings}  Suppose that 
    $$\mathfrak{x} \otimes \mathfrak{y} \in H^1 \left(K_{\fp}, \DD^{\dagger-}_{\Bf} \hatotimes \bD^\dagger_{\rm rig}(\mathcal{H}^\iota_{\tau}(\Gamma^\circ_\ac))\right) \otimes H^1 \left(K_{\fp}, \DD^{\dagger+}_{\Bf} \hatotimes \bD^\dagger_{\rm rig}(\mathcal{H}^\iota_{\tau^{-1}}(\Gamma^\circ_\ac))\right)$$ 
    live in the submodule specified in Remark~\ref{remark_self_dual}. Then we have the equality 
\begin{equation}
    \langle \mathfrak{Log}^{\rm ac}_{-+}(\mathfrak{x}) , \mathfrak{Log}^{\rm ac}_{+-}(\mathfrak{y})\rangle_{\DD} = -(-1)^{\frac{\Bk}{2} + \Bj} \cdot \langle \mathfrak{x} , \mathfrak{y} \rangle_{\rm Tate}^{\rm ac}. \notag
\end{equation}

\end{lemma}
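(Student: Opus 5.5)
The identity will follow from the reciprocity law that characterises Nakamura's big logarithm: Perrin-Riou's big exponential and the big logarithm are adjoint under the Iwasawa-theoretic Tate pairing. The strategy is to check it first for rank-one $(\varphi,\Gamma_\cyc)$-modules over the full two-variable Iwasawa algebra. Only afterwards will we pass to the anticyclotomic quotient modulo $(\gamma^+-1)$. Since both sides are $\Lambda_\Bf\hatotimes\mathcal{H}(\Gamma^\circ_\ac)$-bilinear and, by Remark~\ref{remark_embedding_into_ac}, both $\mathfrak{x}$ and $\mathfrak{y}$ lift to the two-variable cohomology groups appearing in \eqref{+=fp} and \eqref{-=fp-}, it suffices to prove the corresponding identity for $\mathfrak{Log}^\Gamma_{-+}$, $\mathfrak{Log}^\Gamma_{+-}$ and the pairings $\langle\,,\,\rangle_n$ of Definition~\ref{def_Tate_pairing}(i), for each $n$, and then reduce modulo $\gamma^+-1$ and take $\varprojlim_n$.

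\textbf{Step 1.} Through the identifications \eqref{+=fp}--\eqref{-=fp-}, the modules $\DD^{\dagger-}_\Bf\hatotimes\DD^+_{\g_{\tau,n}}$ and $\DD^{\dagger+}_\Bf\hatotimes\DD^-_{\g_{\tau,n}}$ are rank-one $(\varphi,\Gamma_\cyc)$-modules over $\cR_{\Lambda_\Bf\hatotimes\mathscr{O}_{\g_{\tau,n}}}$. Using the self-duality of $V_\Bf^\dagger$ (Remark~\ref{remark_self_dual}(i)) together with the Poincar\'e pairing on $\g_\tau$, these two modules are Tate dual to one another. The maps $\mathcal{L}^{-+}_n$ and $\mathcal{L}^{+-}_n$ of \eqref{equation_Logarithm_Nakamura} are precisely the maps of \cite[Theorem 7.1.4]{LZ1}. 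That theorem, which extends Nakamura's rank-one theory, contains the adjunction formula
\[
\langle \mathcal{L}_D(x),\mathcal{L}_{D^*(1)}(y)\rangle_\DD=(\text{sign})\cdot\langle x,y\rangle_{\rm Iw},
\]
where the sign depends on the Hodge--Tate weights.

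\textbf{Step 2.} To pin down the sign and verify that no extra factor appears, it is enough to compare both sides at a Zariski-dense set of specialisations. These are classical weights $\kappa$ of $\Bf$ and characters of $\Gamma^\circ_\fp\times\Gamma^\circ_\cyc$ of infinity type $(\Bj,-\Bj)$ lying in the range where one of $\mathcal{L}$ interpolates the Bloch--Kato logarithm and the other the dual exponential. At such points the claim reduces to the classical duality $\langle\log x,\exp^*y\rangle_{\rm dR}=\langle x,y\rangle_{\rm Tate}$. The factors $(\Bj+\Bk/2-1)!$ and $(-\Bj-\Bk/2)!$ produced by the two interpolation formulas combine, via the functional equation of the Gamma factor, to give exactly $\pm1$. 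The Euler-like factors $(1-p^{\cdot}\varphi)(1-p^{\cdot}\varphi^{-1})^{-1}$ cancel against their duals. The surviving sign is $-(-1)^{\Bk/2+\Bj}$.

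\textbf{Step 3.} We then reduce modulo $\gamma^+-1$, using \eqref{eq_ses_gamma_gamma_gammaac_first} and \eqref{eq_ses_gamma_gamma_gammaac}. The involutions $\gamma_\fp\mapsto\gamma_\fp^{-1}$ used there are compatible on both factors of the pairing, since $\tau$ pairs with $\tau^{-1}$.

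The main obstacle is the sign bookkeeping in Step 2. This involves tracking the normalisation of $\eta_\Bf,\omega_\Bf$, the twist by $\Bk/2$ in $V_\Bf^\dagger$, and the $\iota$-versus-$\sharp$ conventions. I would first compute everything at the single point $\kappa=k$, $\Bj=0$, and then argue by density and analytic continuation.
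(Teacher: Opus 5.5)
Your Step 2 is, in substance, the paper's entire proof. The paper specialises at classical weights $\kappa$ of $\Bf$ and at the points $(\kappa'_{2j},\chi_\cyc^j)$, i.e.\ a crystalline CM weight $2j$ paired with cyclotomic twist $j$. These points factor through $\mathcal{H}^\iota(\Gamma^\circ_\ac)$ and are dense there. It then applies the interpolation formula \eqref{eq_Log_interpolation_property} to $D=\DD^{\dagger-}_{\f_\kappa}\otimes\tau$ and $D^*(1)=\DD^{\dagger+}_{\f_\kappa}\otimes\tau^{-1}$ with $r=1+\frac{k}{2}$, and concludes from the adjointness of $\exp$ and $\exp^*$ via Berger's pointwise computation. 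Because these points already lie on the anticyclotomic line, you can drop Steps 1 and 3. The paper uses only the interpolation property of \cite[Theorem 7.1.4]{LZ1}, never a two-variable adjunction formula, so nothing remains to be reduced modulo $\gamma^+-1$.

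The one step that fails as written is the factorial bookkeeping in Step 2.
\begin{itemize}
\item The factors are not $(\Bj+\frac{\Bk}{2}-1)!$ and $(-\Bj-\frac{\Bk}{2})!$: at every integer point one of these is a factorial of a negative integer.
\item No functional equation turns such a pair into $\pm1$. The reflection formula gives $\Gamma(s)\Gamma(1-s)=\pi/\sin(\pi s)$, which has a pole at every integer.
\end{itemize}
What \eqref{eq_Log_interpolation_property} actually produces is the following. Take $D$ at a twist $j<r$. There one gets $\frac{(-1)^m}{m!}\log$ with $m=r-j-1\geq 0$. The dual $D^*(1)$ has $r$ replaced by $1-r$ and is evaluated at the twist $-j$. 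That lands in the $\exp^*$ range with $(-j)-(1-r)=m$, giving $m!\,\exp^*$. The case $j\geq r$ is symmetric.

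So the factorials cancel on the nose, leaving $(-1)^m$. The crystalline Euler factors $(*)$ are mutually adjoint inverses, so they cancel. The remaining sign comes out of the $\exp$/$\exp^*$ adjunction computation and gives $-(-1)^{r-\Bj-1}=-(-1)^{\frac{\Bk}{2}+\Bj}$. With this correction, your Step 2 is the paper's argument.
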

\begin{proof}
    
    It suffices to prove that the claimed identity holds when it is specialised to $E$-valued classical points $\kappa\in \cX$, by the density of such points.
    
    To that end, let $D=\cR_E(\tau\cdot\delta_\cyc^{r})$ be a $(\varphi, \Gamma_\cyc)$-module of rank one over $\cR_E$ (cf. \cite{Colmez_2008} for a classification of such), where $\QQ_p^\times \xrightarrow{\delta_\cyc} E^\times$ is the character $x\mapsto x|x|$, $r$ is an integer (to be specified later, in terms of our $\kappa$), and $\ZZ_p^\times \xrightarrow{\tau} E^\times$ has finite order. %(in the case of interest, this comes from the branch character of $\g_\tau$, but here we absorb it into $D$). 
    Using the identifications \eqref{eq_isom_+_-} and \eqref{eq_isom_star_bullet}, we oberve that specialising $\Lambda_{\g} \hatotimes \mathcal{H}^\iota(\Gamma_\cyc^\circ) \cong \mathcal{H}(\Gamma_\p^\circ) \hatotimes \mathcal{H}^\iota(\Gamma_\cyc^\circ)$ at $( \kappa'_{2j}, \chi_\cyc^j)$, where $\kappa'_{2j}$ is a crystalline weight-$2j$ specialisation of $\LL_\g$ for some integer $j$, factors through $\mathcal{H}^\iota(\Gamma_\ac^\circ)$. Moreover, such specialisations are dense in ${\rm Spm}\,\mathcal{H}(\Gamma^\circ_\ac)$. 
    
    By the discussion above, we infer that for $\mathfrak{c} \in H^1 \left(K_{\fp}, D \hatotimes \bD^\dagger_{\rm rig}(\mathcal{H}^\iota(\Gamma^\circ_\ac))\right)$ and any lift $$\widetilde{\mathfrak{c}} \in H^1 \left(K_{\fp}, D \hatotimes \bD^\dagger_{\rm rig}(\mathcal{H}^{\sharp}(\Gamma^\circ_{\fp})) \hatotimes \bD^\dagger_{\rm rig}(\mathcal{H}^\iota(\Gamma^\circ_{\rm cyc})) \right)$$
    of $\mathfrak{c}$, the composition
    $$ \mathfrak{c} \longmapsto \mathfrak{Log}_D^\ac(\mathfrak{c})\longmapsto \mathfrak{Log}_D^\ac(\mathfrak{c})(j):= (\kappa'_{2j}, \chi_\cyc^j)\circ \mathfrak{Log}_D^\Gamma(\widetilde{\mathfrak{c}}), $$
    is well-defined, 
 where $\mathfrak{Log}_D^\Gamma$ is induced by the Perrin-Riou logarithm map defined in \cite[Theorem 7.1.4]{LZ1} (see Definition~\ref{defn_2_var_log}), and $\mathfrak{Log}_D^\ac$ is, as in Definition~\ref{def_2026_06_03_1051} above, $\mathfrak{Log}_D^\Gamma \pmod{\gamma_+-1}$. Let $\mathfrak{c}_{0,j}$ denote the image of $\mathfrak{c}$ under the canonical (augmentation) map, composed with the twisting morphism ${\rm Tw}_{\chi^j_\cyc}$. Then from \cite[Theorem 7.1.4]{LZ1} (see also \cite[Propositions 8.11 \& 8.14]{venjakob_epsilon}, \cite[Proposition 4.16]{Nakamuraepsilon} and \cite[\S B.5]{LZ0}), we have
    \begin{equation}
        \mathfrak{Log}_D^\ac(\mathfrak{c})(j) = (*)\begin{cases}
            \frac{(-1)^{r-j-1}}{(r-j-1)!}\cdot \log(\mathfrak{c}_{0,j}), \quad &\text{if }j-r <0\,, \\
            (j-r)! \cdot \exp^*(\mathfrak{c}_{0,j}),\quad &\text{if }j-r \geq 0\,,
        \end{cases}
        \label{eq_Log_interpolation_property}
    \end{equation}
    where $$(*) = \begin{cases}
(1 - \varphi p^{j})\left(1 -{\varphi^{-1}}{p^{-1-j}}\right)^{-1}, & \text{if } \tau \text{ is unramified},  \\
({p^{1+j}}{\varphi})^{\rm{cond}(\tau)} G(\tau)^{-1}, & \text{if } \tau \text{ is ramified}.
\end{cases}$$
Here $G(\tau)$ is the Gauss sum as in op. cit. and ${\rm cond}(\tau)$ is the conductor of $\tau$. Then \eqref{eq_Log_interpolation_property}, together with the same point-wise computation as in the proof of \cite[Theorem II.14]{berger03} (since $\exp$ and $\exp^*$  are adjoints, by definition)
%is \cite[Proposition 2.16]{nakamura2014Jussieu}) 
yields the equality 
$$\langle \mathfrak{Log}^\ac_D(\mathfrak{x}) , \mathfrak{Log}^\ac_{D^*(1)}(\mathfrak{y}) \rangle_\DD = -(-1)^{r- \Bj-1} \cdot \langle \mathfrak{x}, \mathfrak{y}\rangle_{\rm Tate}^\ac.$$
The specialisation of the claimed statement of our lemma to classical points $\kappa$, hence the lemma itself, follows on setting $D = \DD^{\dagger -}_{\f_\kappa}\otimes \tau$ and $D^*(1) = \DD^{\dagger +}_{\f_\kappa} \otimes \tau^{-1}$ (where, denoting by $k$ the weight of $\kappa$, we set $r = 1+\frac{k}{2}$ in this case).
\end{proof}

\begin{proposition}[Rubin-Style formula]\label{Rubin_formula}
    We set $\mathfrak{d}^{(\fp)}_{\rm{cyc}} {\rm BF}^{\tau}_{\rm ac, \Bf} \in H^1 \left(K_{\fp}, \DD^{\dagger-}_\f \hatotimes \bD^\dagger_{\rm rig}(\mathcal{H}^\iota_\tau(\Gamma^\circ_{\rm ac})) \right)$ to be the image of $\mathfrak{d}^{(\fp)}_{\rm{cyc}} {\rm{\bf BF}}_\Bf^\tau$ under the map induced on cohomology by \eqref{eq_ses_gamma_gamma_gammaac_first}. We have the equality 
    \[\left\langle _c{\rm BF}^{\tau}_{\rm ac, \Bf}, {_c{\rm BF}^{\tau^{-1}}_{\rm ac, \Bf}} \right\rangle = -\left\langle \mathfrak{d}^{(\fp)}_{\rm{cyc}}{\rm BF}^{\tau}_{\rm ac, \Bf} , {_c{\rm BF}_{\rm ac, \fp}^{\tau^{-1}, +}} \right\rangle_{\rm Tate}^{\rm ac}\]
    in $\Lambda_\Bf \hatotimes \mathcal{H}(\Gamma^\circ_{\rm ac})$.
\end{proposition}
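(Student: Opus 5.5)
The plan is to apply Rubin's formula (Theorem~\ref{thm_RSformula_cyclo_height_2025_12_14}) in the form of Corollary~\ref{cor_thm_RSformula_cyclo_height_2025_12_15}. We take $V=V^*=V_\f^\dagger\hatotimes\mathcal{H}^\iota_\tau(\Gamma^\circ_\ac)$ (with the dual twisted by $\tau^{-1}$), over $F=K$ and with $S_p=\{\fp,\fp^c\}$. The triangulations are $\bD=\DD^{\dagger+}_\f$ at both primes above $p$; these are orthogonal to themselves by the self-duality of $V_\f^\dagger$ (Remark~\ref{remark_self_dual}(i)). By Lemma~\ref{lemma_BF_vanish}, $_c{\rm BF}^{\tau}_{\rm ac,\Bf}$ defines a class $[z_{\rm f}]$ in the Selmer complex cohomology, and $_c{\rm BF}^{\tau^{-1}}_{\rm ac,\Bf}$ defines the class $[y_{\rm f}]$. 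The lift demanded in Theorem~\ref{thm_RSformula_cyclo_height_2025_12_14} is $[\widetilde z]$, the image of $_c{\rm\bf BF}^\tau_\f$ in the cohomology of $V_\epsilon$, i.e.\ modulo $(\gamma_+-1)^2$. Via the identifications \eqref{eq_isom_+_-}, \eqref{eq_isom_star_bullet} and \eqref{eq_ses_gamma_gamma_gammaac_first}, the cyclotomic deformation of the anticyclotomic representation is exactly $V_\f^\dagger\hatotimes\mathcal{H}^\sharp_\tau(\Gamma^\circ_\fp)\hatotimes\mathcal{H}^\iota(\Gamma^\circ_\cyc)$, and $\gamma_+$ corresponds to $\gamma_\cyc$. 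The projection ${\rm pr}_0$ then sends $[\widetilde z]$ to $[z]$.

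Next we dispose of the local terms. For $\lambda\nmid p$, Remark~\ref{rem_2025_12_14_1222} shows that $H^0(K_\lambda,V_\kappa)=0$ at a dense set of classical points, by local--global compatibility, so Corollary~\ref{cor_thm_RSformula_cyclo_height_2025_12_14} applies. At $\fp^c$ we want $\fp^c\in S_{\rm cris}$. That is, we want $\res_{\fp^c}[\widetilde z]$ to lie in the image of $H^1(K_{\fp^c},\bD_\epsilon^{(\fp^c)})$. By the symmetry $c\gamma_\fp c^{-1}=\gamma_{\fp^c}$, the $\fp^c$-restriction of the Beilinson--Flach class is governed by $\pi_-$ applied to the conjugate-branch class. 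This is the $\Gamma$-level (two-variable) version of the vanishing in Lemma~\ref{lemma_BF_vanish}: the $\g$-dominant component lies in $\DD^-_\g$, which over $K_{\fp^c}$ corresponds to the $\DD^{\dagger+}_\f$-part of the balanced condition in Proposition~\ref{proposition_BF_properties}(c). I would verify this by checking that over $K_{\fp^c}$ the balanced local condition of $V_\f\hatotimes{\rm Ind}$ restricts to $\DD^{\dagger+}_\f\hatotimes\mathcal{H}$, using \eqref{equation_Induced_Rep_isom}--\eqref{equation_identifying_F+-_with_Lambda_p}.

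Corollary~\ref{cor_thm_RSformula_cyclo_height_2025_12_15} then leaves only the $\fp$-term, $-\langle\mathfrak d_\cyc^{(\fp)}[\widetilde z],\res_\fp(y)\rangle_{\rm Tate}$. We identify $\mathfrak d^{(\fp)}_\cyc[\widetilde z]\in H^1(K_\fp,\widetilde\bD^*_\fp(1))=H^1(K_\fp,\DD^{\dagger-}_\f\hatotimes\cdots)$ with the image of $\mathfrak d^{(\fp)}_\cyc{\rm\bf BF}^\tau_\f$ from Lemma~\ref{lemma_2026_05_28_0843}. This holds because both satisfy $\times_\gamma(\cdot)=\pi_-^{(\fp)}\res_\fp(\widetilde z)$, and uniqueness holds given that $H^0$ of the singular quotient vanishes; that vanishing follows at dense points, since $\DD^{\dagger-}_\f$ twisted by a nontrivial family of characters has no invariants. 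Finally, Remark~\ref{remark_simpler_description_of_BF^+_p} gives $\res_\fp(_c{\rm BF}^{\tau^{-1}}_{\rm ac,\Bf})={}_c{\rm BF}^{\tau^{-1},+}_{{\rm ac},\fp}$, and under this identification the Tate pairing of Corollary~\ref{cor_thm_RSformula_cyclo_height_2025_12_15} is $\langle\,,\,\rangle^{\rm ac}_{\rm Tate}$ of Definition~\ref{def_Tate_pairing}. Together these give the stated formula.

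The main obstacle is the $\fp^c$-vanishing, i.e.\ showing $\fp^c\in S_{\rm cris}$ at the level of the first-order cyclotomic deformation rather than only after reduction modulo $\gamma_+-1$. If the structural argument via the balanced condition fails, my fallback is to apply Corollary~\ref{cor_thm_RSformula_cyclo_height_2025_12_14} directly. I would then show that the $\fp^c$-summand of the derivative pairs to zero against $\res_{\fp^c}(y)$, because $\res_{\fp^c}(y)$ lies in the $\DD^{\dagger+}_\f$-part, and argue pointwise at dense specialisations.
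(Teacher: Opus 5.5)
Your proposal is correct and follows the paper's proof. Like the paper, you apply Corollary~\ref{cor_thm_RSformula_cyclo_height_2025_12_15} with $[z_{\rm f}]={}_c{\rm BF}^\tau_{\rm ac,\Bf}$, $[y_{\rm f}]={}_c{\rm BF}^{\tau^{-1}}_{\rm ac,\Bf}$, $[\widetilde z]={}_c{\rm\bf BF}^\tau_\Bf$ and $S_{\rm cris}=\{\fp^c\}$, and you obtain the $\fp^c$-condition from the balanced condition of Proposition~\ref{proposition_BF_properties}(c) read through \eqref{--=fp--}. Your fallback would not work as stated, though it is not needed. The local term at $\fp^c$ pairs $H^1(K_{\fp^c},\DD^{\dagger-}_\f\hatotimes\cdots)$ perfectly against the $\DD^{\dagger+}_\f$-part, where $y^+_{\fp^c}$ always lies. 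So that term vanishes only because the derivative itself vanishes at $\fp^c$, not because of where $\res_{\fp^c}(y)$ lies.
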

\begin{proof}
    This is a direct consequence of Corollary \ref{cor_thm_RSformula_cyclo_height_2025_12_15}, with the choices 
    $$[z_f] = {_c{\rm BF}^\tau_{\ac, \f}},\quad [y_f] = {_c{\rm BF}^{\tau^{-1}}_{\ac, \f}},\quad [\widetilde{z}] = {_c\mathbf{BF}^\tau_\f}\,.$$ 
Note that $S_{\rm cris}=\{\fp^c\}$, and the additional condition that is stated at the beginning of Corollary \ref{cor_thm_RSformula_cyclo_height_2025_12_15} is satisfied in our case by  Proposition \ref{proposition_BF_properties}(c) and the identification \eqref{--=fp--}.
\end{proof}

\subsection{\texorpdfstring{$p$-adic Gross--Zagier formula}{The ordinary p-adic Gross--Zagier formula}} \label{subsec_GZ_formula}
We will now put the pieces together and prove the main results of our paper. We will assume that  \eqref{item_Disc}, \eqref{item_Heeg}, and \eqref{item_BI} hold until the end of this section.
\subsubsection{Comparison of big Generalised Heegner cycles and big Beilinson--Flach elements} 
For notational convenience, let us set 
$$\mathcal{H}_{\rm ac, \Bf}:= \Lambda_\Bf \hatotimes \mathcal{H}(\Gamma^\circ_{\rm ac})\,, \quad \mathcal{H}^{\tau, \iota}_{{\rm ac},\Bf}:= \Lambda_\Bf \hatotimes \mathcal{H}_\tau^\iota(\Gamma^\circ_{\rm ac})\,,\quad V^{\dagger, \tau}_{\Bf, \, \rm ac}:= V^\dagger_\Bf \hatotimes \mathcal{H}^\iota_\tau(\Gamma^\circ_{\rm ac})\,,\quad  \DD^{\pm, \dagger,\tau}_{\f, \, \rm ac} := \DD^{\pm, \dagger}_\f \hatotimes \bD^\dagger_{\rm rig}(\mathcal{H}^\iota_{\tau}(\Gamma^\circ_{\ac}))\,.$$ 
We also define $\mathfrak{Z}^\tau_{\infty,\fp} :=\rm{res}_\fp(\mathfrak{Z}^\tau_\infty) \in H^1(K_\fp, \DD^{+,\dagger, \tau}_{\f,\, \ac})$.
\begin{proposition} \label{proposition_Selmer_rank_1}
    Suppose \eqref{item_Disc}, \eqref{item_Heeg}, and \eqref{item_BI} hold. Assume also that $p\nmid 6h_K$, then 
    $$\rm{rank}_{\mathcal{H}_{\rm ac, \f}} \left(H^1_{\rm f}(G_{K,S}, V^{\dagger, \tau}_{\f, \ac}) \right)=1\,,\qquad \forall\,\tau \in \widehat{\Delta}_\ac\,. $$
    \end{proposition}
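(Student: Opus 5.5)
Our plan is to sandwich the rank between a lower bound coming from the non-triviality of the big Heegner class and an upper bound coming from the Kolyvagin--Howard Euler system argument over the anticyclotomic tower.

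\textbf{Lower bound.} The class $\mathfrak{Z}^\tau_\infty$ lies in $H^1_{\rm f}(G_{K,S},V^{\dagger,\tau}_{\f,\ac})$, so it suffices to show that it is non-torsion over $\mathcal{H}_{\rm ac,\f}$. For this we apply Theorem~\ref{reciprocity_Z_inf}: the image of $\mathfrak{Z}^\tau_\infty$ under $\mathscr{L}^\tau_{\omega_\f}\circ{\rm res}_\fp$ equals $\pm\mathfrak{L}^\tau_\fp(\f)$. It is therefore enough to know that the BDP $p$-adic $L$-function $\mathfrak{L}^\tau_\fp(\f)$ is non-zero. By its interpolation property, its values at a dense set of pairs $(\kappa,\chi)$ with $\chi$ of infinity type outside the critical strip are central values $L(\f_\kappa\times\chi^{-1},\cdot)$, up to non-zero periods and Euler factors. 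Non-vanishing for all but finitely many anticyclotomic $\chi$ of $p$-power conductor follows from Hsieh's theorem, or equally from Ribet/Vatsal-type results, for one fixed classical $\kappa$. Alternatively, one can invoke the non-triviality already stated in the proposition on $\widetilde{\mathfrak Z}_\infty$ together with torsion-freeness of $H^1$ under \eqref{item_BI}. Since $\mathcal{H}_{\rm ac,\f}$ is a domain on each branch, this gives rank $\ge 1$.

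\textbf{Upper bound.} We bound the rank of $H^1_{\rm f}$ by $1$. First we specialise to a dense set of crystalline classical weights $\kappa$ and finite-order anticyclotomic characters. Over a specialisation, Howard's/Castella's Heegner point Kolyvagin system for $V^\dagger_{\f_\kappa}\otimes\chi$ (a cohomology class attached to the Heegner tower over ring class fields) applies because of \eqref{item_BI}, \eqref{item_Heeg}, \eqref{item_Disc} and $p\nmid 6h_K$. Provided the bottom Heegner class is non-zero (which holds at a dense set by the first step, via a control argument), the Kolyvagin-system machinery yields $\dim H^1_{\rm f}(K,V^\dagger_{\f_\kappa}\otimes\chi)=1$.

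\textbf{Passage to families.} We then lift this bound from specialisations to the family. Take the generic rank of $H^1_{\rm f}(G_{K,S},V^{\dagger,\tau}_{\f,\ac})$, viewed as the $H^1$ of a perfect Selmer complex in degrees $[0,3]$ via Pottharst's formalism. Because $H^0$ vanishes by \eqref{item_BI} and the complex is perfect, the base change spectral sequence injects $H^1\otimes_{\kappa,\chi}E$ into the $H^1$ of the specialisation, up to $H^2$-torsion contributions. Consequently, upper semicontinuity of fibre dimensions over a dense set of points gives rank $\le 1$.

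\textbf{Main obstacle.} We expect the hardest step to be the non-ordinary (Coleman) case. There the Selmer complex uses non-étale triangulations, and integral Kolyvagin systems are not directly available. Our first attempt would be to work pointwise at crystalline points of non-critical slope: there the triangulated local condition coincides with Bloch--Kato, so Kolyvagin's argument for $H^1_{\rm f}$ applies. Density of such points, together with the perfectness of the complex over the affinoid pieces $\mathcal{H}_n$, should then suffice to conclude.
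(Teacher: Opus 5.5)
Your lower bound is fine, and it is more carefully argued than in the paper, which simply asserts that $\mathfrak{Z}^\tau_\infty$ is non-torsion. Keep the BDP argument: the ``alternative'' via $\widetilde{\mathfrak Z}_\infty$ does not see each $\tau$-component separately. Your upper bound takes a different route from the paper, and one step fails as written, in the positive-slope case that is new here.

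\textbf{The paper's route.} It fixes one crystalline $\kappa$ and never specialises the anticyclotomic variable. It embeds $H^1_{\rm f}$ into the Selmer group relaxed at $\fp$ with $\DD^+$ at $\fp^c$. The quotient by the group that is strict at $\fp^c$ injects into the rank-one module $H^1(K_{\fp^c},\DD^{+,\dagger,\tau}_{\f_\kappa,\ac})$. The strict group is shown to be torsion by combining Kobayashi--Ota with Nekov\'a\v{r}'s duality and Euler--Poincar\'e formula.

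\textbf{The gap.} You claim the bottom Heegner class is non-zero at finite-order $\chi$ ``by the first step, via a control argument.'' Over $\mathcal{H}(\Gamma^\circ_{\ac})$, being non-torsion gives no control at finite-order characters. For example, $\log_p(\gamma_{\ac})$ is a non-zero element that vanishes at every character of $p$-power order, so $\log_p(\gamma_{\ac})\cdot z$ is non-torsion but specialises to $0$ at every such $\chi$. For positive slope, the anticyclotomic Heegner class really is unbounded, so this is not a formality. You do not need density: by semicontinuity one point suffices. But you do need one point, justified independently. One way is to evaluate Theorem~\ref{reciprocity_Z_inf} at $(\kappa,\chi)$. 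Then use that, for fixed $\kappa$, the BDP $p$-adic $L$-function is a non-zero bounded measure on each branch, so it vanishes at only finitely many finite-order $\chi$. Alternatively, quote a higher-weight Mazur-type non-vanishing theorem for generalised Heegner cycles.

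\textbf{The Kolyvagin input.} The Kolyvagin systems of Howard and Castella are built in the $p$-ordinary (Hida) setting. At a positive-slope $f_\kappa$ twisted by a $\chi$ of non-trivial $p$-power conductor (which is what your non-vanishing step produces), you would need two further things:
\begin{itemize}
\item a pointwise rank-one theorem for twisted generalised Heegner cycles that is valid without ordinarity (of Castella--Hsieh type);
\item a check that the triangulated local conditions at $\fp$ and $\fp^c$ agree with Bloch--Kato for the potentially crystalline $V^\dagger_{\f_\kappa}\otimes\chi$.
\end{itemize}
This is exactly the difficulty the paper avoids by packaging the Euler-system input into an Iwasawa-theoretic result built for non-ordinary forms.

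\textbf{Passing from the point to the family.} Specialise one principal ideal at a time, using $H^1(C)/P\hookrightarrow H^1(C/P)$, as the paper implicitly does for $\kappa$. The two-variable spectral-sequence argument ``up to $H^2$-torsion contributions'' does not by itself give the injectivity you need.
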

\begin{proof}
    Since $\mathfrak{Z}^\tau_\infty \in H^1_{\rm f} (G_{K,S}, V^{\dagger, \tau}_{\f, \ac})$ in non-torsion under the current assumptions, we have that 
    \begin{equation}
        \rm{rank}_{\mathcal{H}_{\rm ac, \f}} \left(H^1_{\rm f}(G_{K,S}, V^{\dagger, \tau}_{\f, \ac}) \right)\geq 1. \label{equation_inequality_>1}
    \end{equation}
    On the other hand, let $\f_\kappa \in S_w(\Gamma_0(N_\f))$, with $w \in 2\ZZ^{\geq 1}$, be some crystalline specialisation of $\f$ such that $\mathfrak{Z}^\tau_{\infty, \, \kappa}$ is a non-torsion specialisation of $\mathfrak{Z}^\tau_{\infty}$. We will prove that $\rm{rank}_{\mathcal{H}(\Gamma^\circ_{\rm ac})}\left(H^1_{\rm f} (G_{K,S}, V^{\dagger, \tau}_{\f_\kappa, \ac}  \right) \leq 1$, which suffices to show the reverse inequality to \eqref{equation_inequality_>1} (after possibly shrinking $U_\f$). 
    
    Let $S^\bullet(V^{\dagger,\tau}_{\f_\kappa, \ac}, \bD_{\p^c})$ denote the Selmer complex constructed by considering no local conditions at $\p$, and the local condition given by $\DD^{\dagger+,\tau}_{\f, \ac}$ at $\p^c$. We also set $S^\bullet(V^{\dagger, \tau}_{\f_\kappa, \ac}, \bD_{\emptyset, 0})$ to be the Selmer complex constructed by considering no condition at $\p$ and the ``strict'' local condition at $\p^c$ given by the trivial submodule of $\bD^\dagger_{\rm rig}(V^{\dagger, \tau}_{\f_\kappa, \ac})$. Let us write $H^i_{\fp^c}(G_{K,S},V^{\dagger, \tau}_{\f_\kappa, \ac} )$ (resp. $H^i_{\emptyset, 0}(G_{K,S},V^{\dagger, \tau}_{\f_\kappa, \ac})$) for the $i$'th cohomology group of $S^\bullet(V^{\dagger,\tau}_{\f_\kappa, \ac}, \bD_{\p^c})$ (resp. of $S^\bullet(V^{\dagger, \tau}_{\f_\kappa, \ac}, \bD_{\emptyset, 0})$). It clearly suffices to show that the rank of $H^1_{\p^c}(G_{K,S}, V^{\dagger, \tau}_{\f_\kappa, \ac})$ is bounded above by one.
    
    From the definitions of $S^\bullet(V^{\dagger,\tau}_{\f_\kappa, \ac}, \bD_{\p^c})$ and $S^\bullet(V^{\dagger, \tau}_{\f_\kappa, \ac}, \bD_{\emptyset, 0})$, we obtain the embedding
    $$ \frac{H^1_{\fp^c}(G_{K,S},V^{\dagger, \tau}_{\f_\kappa, \ac} )}{H^1_{\emptyset, 0} (G_{K,S},V^{\dagger, \tau}_{\f_\kappa, \ac} )} \hookrightarrow H^1(K_{\p^c}, \DD^{+, \dagger, \tau}_{\f_\kappa, \rm ac} ) \,.$$
    We are therefore reduced to showing that $H^1_{\emptyset, 0}(G_{K,S},V^{\dagger, \tau}_{\f_\kappa, \ac} )$ is a torsion $\mathcal{H}(\Gamma^\circ_\ac)$-module. 
    This follows under our current assumptions from the main theorem of \cite{kobayashiota}, which tells us that 
    \begin{equation}
        \rm{rank}_{\Lambda(\Gamma^\circ_\ac)} \left(H^1_{
        0, \emptyset}(\Gal(K_S/K_{\ac}), W_{\f_\kappa}^{\dagger, \tau} )^\vee\right) =0, \label{eq_Kobayashi_Ota}
    \end{equation}
    where $K_S$ is the maximal unramified outside $S$ extension of $K$, $K_{\ac}$ is the field corresponding to $\Gamma_\ac^\circ$, $W_{\f_\kappa}^{\dagger, \tau} = V_{\f_\kappa}^{\dagger, \tau}/T^{\dagger, \tau}_{\f_\kappa}$ for some fixed $G_\Q$-stable lattice $T_{\f_\kappa}^{\dagger, \tau} \subset V^{\dagger, \tau}_{\f_\kappa}$, and $(-)^\vee$ denotes the Pontryagin duality functor. By Nekov\'a\v{r}'s global duality \cite[Theorem 8.9.12]{nekovar06}, we have an isomorphism
    \begin{equation}
        H^1_{0,\emptyset}(\Gal(K_S/K_{\ac}), W_{\f_\kappa}^{\dagger, \tau})^\vee \cong H^2_{\emptyset,0}(G_{K,S}, T_{\f_\kappa}^{\dagger, \tau, \star}(1)\hatotimes \Lambda(\Gamma^\circ_\ac)) \cong H^2_{\emptyset, 0}(G_{K,S}, T_{\f_\kappa}^{\dagger, \tau}\hatotimes \Lambda(\Gamma^\circ_\ac)), \label{eq_isom_selmer}
    \end{equation}
    where the last isomorphism follows by self-duality of $T^\dagger_{\f_\kappa}$
    (note that the local conditions $\{\emptyset, 0 \}$ and $\{0, \emptyset\}$ make sense in Nekov\'a\v{r}'s formalism). For the Selmer complex given by the local condition $\{\emptyset, 0\}$ at $p$, the global Euler-Poincar\'e characteristic formula (\cite[Theorem 8.9.15]{nekovar06}) reads 
    $$\sum_{i=0}^2 (-1)^i{\rm rank}_{\Lambda(\Gamma^\circ_\ac)}\left(H^i_{\emptyset, 0}(G_{K,S}, T^{\dagger,\tau}_{\f_\kappa} \hatotimes \Lambda(\Gamma^\circ_\ac)\right) = 0,$$ 
    where we recall that our local conditions at $\p$ (resp. at $\p^c$) are relaxed (resp. strict) local conditions. Since $H^0_{\emptyset, 0}(G_{K,S}, T^{\dagger, \tau}_{\f_\kappa} \hatotimes \Lambda(\Gamma^\circ_\ac)) = 0$, and $\rm{rank}_{\Lambda(\Gamma^\circ_\ac)}\left (H^2_{\emptyset, 0}(G_{K,S}, T^{\dagger, \tau}_{\f_\kappa} \hatotimes \Lambda(\Gamma^\circ_\ac))\right) = 0$ by \eqref{eq_Kobayashi_Ota} and \eqref{eq_isom_selmer}, we conclude that 
    $$\rm{rank}_{\Lambda(\Gamma^\circ_\ac)}\left (H^1_{\emptyset, 0}(G_{K,S}, T^{\dagger, \tau}_{\f_\kappa} \hatotimes \Lambda(\Gamma^\circ_\ac))\right) = 0.$$
   The result now follows after base change to $\mathcal{H}(\Gamma^\circ_\ac)$ by \cite[Theorem 1.9]{pottharst_analytic_families}.  
\end{proof}
\begin{proposition}
\label{prop_BF=Z}  Under the same hypotheses as in the statement of Proposition \ref{proposition_Selmer_rank_1}, we have the following equality in $H^1_{\rm f}(G_{K,S}, {V^{\dagger, \tau}_{{\rm ac}, \Bf}}) \otimes_{\mathcal{H}_{{\rm ac},\Bf}}\Frac\left((\mathcal{H}_{\rm ac, \Bf})\right)$:
    \[\mathfrak{Z}^\tau_\infty = (-1)^{\frac{\Bk}{2}+\Bj}\cdot\frac{\mathscr{R_{\rm ac}}(\Bg_{\tau^{-1}})}{\mathscr{C}_{\rm ac}(\Bg_{\tau^{-1}})}\cdot \frac{\mathfrak{L}_\fp^{\tau}(\Bf)}{\mathfrak{L}^{\g_{\tau^{-1}}}_{{\rm ac},0}(\Bf, \tau^{-1})}\cdot{_c{\rm BF}^{\tau}_{\rm ac, \Bf}}.\]
    
\end{proposition}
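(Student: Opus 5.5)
The plan is to compare the two classes inside the rank-one module given by Proposition~\ref{proposition_Selmer_rank_1}, and then to fix the constant of proportionality by looking at their images at $\fp$ under the logarithm map $\mathscr{L}^{\tau^c}_{\omega_\f}$. The argument runs in four steps.

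\textbf{Step 1 (both classes lie in the rank-one module).} By Lemma~\ref{lemma_BF_vanish}, applied with $\tau^{-1}$ in place of $\tau$ as needed, ${}_c{\rm BF}^{\tau}_{\rm ac,\f}$ lies in $H^1_{\rm f}(G_{K,S},V^{\dagger,\tau}_{\f,\ac})$. The class $\mathfrak{Z}^\tau_\infty$ lies in the same module by construction. Proposition~\ref{proposition_Selmer_rank_1} says this module has rank one over $\mathcal{H}_{\rm ac,\f}$, and $\mathfrak{Z}^\tau_\infty$ is non-torsion. So after tensoring with $\Frac(\mathcal{H}_{\rm ac,\f})$ there is a unique scalar $\lambda\in\Frac(\mathcal{H}_{\rm ac,\f})$ with ${}_c{\rm BF}^{\tau}_{\rm ac,\f}=\lambda\cdot\mathfrak{Z}^\tau_\infty$.

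\textbf{Step 2 (localise at $\fp$).} Both classes are Selmer classes, so their restrictions at $\fp$ factor through $H^1(K_\fp,\DD^{+,\dagger,\tau}_{\f,\ac})$. On the Beilinson--Flach side, Remark~\ref{remark_simpler_description_of_BF^+_p} identifies ${\rm res}_\fp$ of the Beilinson--Flach class with ${}_c{\rm BF}^{\tau,+}_{\rm ac,\fp}$, the class attached to the CM family $\g_{\tau^{-1}}$, using $\tau^c=\tau^{-1}$ and relabelling $\tau\leftrightarrow\tau^{-1}$. Applying $\mathscr{L}^{\tau}_{\omega_\f}$ to the identity of Step 1 gives
\[
\mathscr{L}^{\tau}_{\omega_\f}({\rm res}_\fp\,{}_c{\rm BF}^{\tau}_{\rm ac,\f})=\lambda\cdot\mathscr{L}^{\tau}_{\omega_\f}(\mathfrak{Z}^\tau_{\infty,\fp}).
\]
We then evaluate each side by a reciprocity law. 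Corollary~\ref{corollary_reciprocity_for_anticyclotomic_BF}(ii), applied to $\g_{\tau^{-1}}$, shows the left side equals $\frac{\mathscr{C}_{\rm ac}(\g_{\tau^{-1}})}{\mathscr{R}_{\rm ac}(\g_{\tau^{-1}})}\mathfrak{L}^{\g_{\tau^{-1}}}_{{\rm ac},0}(\f,\tau^{-1})$. Theorem~\ref{reciprocity_Z_inf} shows the right side equals $\lambda\cdot(-1)^{\frac{\Bk}{2}+\Bj}\mathfrak{L}^\tau_\fp(\f)$. Solving for $\lambda^{-1}$ gives exactly the stated formula, using $((-1)^{\frac{\Bk}{2}+\Bj})^{-1}=(-1)^{\frac{\Bk}{2}+\Bj}$.

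\textbf{Step 3 (non-vanishing and injectivity).} To invert, I need three facts. First, $\mathscr{L}^\tau_{\omega_\f}$ is injective; this follows from the injectivity of $\mathfrak{Log}^{\rm ac}_{+-}$ (Remark~\ref{remark_embedding_into_ac}) together with the fact that pairing with $\omega_\f$ is an isomorphism on the rank-one module $\DD^+$. Second, $\mathfrak{L}^\tau_\fp(\f)\neq0$, which is equivalent, via Theorem~\ref{reciprocity_Z_inf}, to $\mathfrak{Z}^\tau_{\infty,\fp}$ being non-torsion; this holds because the BDP $p$-adic $L$-function interpolates non-zero central values at a dense set of points (the input behind Proposition~\ref{proposition_Selmer_rank_1}). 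Third, $\mathfrak{L}^{\g_{\tau^{-1}}}_{{\rm ac},0}(\f,\tau^{-1})\neq0$ and $\mathscr{C}_{\rm ac}\neq0$. These follow from the previous two: if $\mathfrak{L}^{\g_{\tau^{-1}}}_{{\rm ac},0}$ vanished, then ${}_c{\rm BF}$ would be locally trivial at $\fp$, hence zero by Step 1 since $\mathfrak{Z}_{\infty,\fp}$ is non-torsion. Thus I will also need ${}_c{\rm BF}^{\tau}_{\rm ac,\f}\neq0$. I would get this either from the factorisation of the $\g$-dominant Rankin--Selberg $p$-adic $L$-function in terms of the BDP $L$-function times a Katz-type factor, or from the Euler system argument using \eqref{item_BI}.

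\textbf{Main obstacle.} I expect the hardest part to be Step 3 together with the index bookkeeping in Step 2. The non-vanishing of the Beilinson--Flach class (equivalently of $\mathfrak{L}^{\g}_{{\rm ac},0}$) is not formal. In the Eisenstein case \eqref{item_Eis'} one also needs care with $\mathscr{R}_{\rm ac}$, which may have poles (cf. Remark~\ref{remark_R_ac_vs_L(g)}). Similarly, matching $\tau$ with $\tau^{-1}$ and the $\sharp/\iota$ conventions so that the two logarithms land in the same module is delicate, and it is exactly what produces the subscripts $\g_{\tau^{-1}}$ in the statement.
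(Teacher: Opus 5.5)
Your proposal is correct and follows the paper's proof. Both classes lie in the rank-one module of Proposition~\ref{proposition_Selmer_rank_1}, and the proportionality constant is fixed by applying $\mathscr{L}^{\tau}_{\omega_\Bf}\circ{\rm res}_\fp$ and comparing Corollary~\ref{corollary_reciprocity_for_anticyclotomic_BF}(ii) (for $\g_{\tau^{-1}}$) with Theorem~\ref{reciprocity_Z_inf}.

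The only difference is cosmetic. The paper writes $\mathfrak{Z}^\tau_\infty=\lambda\cdot{}_c{\rm BF}^{\tau}_{\rm ac,\Bf}$ and tacitly assumes that ${}_c{\rm BF}^{\tau}_{\rm ac,\Bf}$ does not vanish, or equivalently that $\mathfrak{L}^{\g_{\tau^{-1}}}_{{\rm ac},0}(\Bf,\tau^{-1})\neq 0$. Your normalisation ${}_c{\rm BF}^{\tau}_{\rm ac,\Bf}=\lambda\cdot\mathfrak{Z}^\tau_\infty$ only needs $\mathfrak{Z}^\tau_\infty$ to be non-torsion, and it makes that extra non-vanishing input explicit.
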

\begin{proof}
     By lemma \ref{lemma_BF_vanish}, we have that $_c{\rm BF}^{\tau}_{\rm ac, \Bf} \in H^1_{\rm f}(G_{K,S}, {V^{\dagger, \tau}_{{\rm ac}, \Bf}})$ and hence, by Propsition \ref{proposition_Selmer_rank_1},  we have the following equality in $H^1_{\rm f}(G_{K,S}, {V^{\dagger, \tau}_{{\rm ac}, \Bf}} )\otimes_{\mathcal{H}_{{\rm ac}, \Bf}} \text{Frac}(\mathcal{H_{\rm ac, \Bf}})$: \[\mathfrak{Z}^\tau_\infty = \lambda \cdot {_c{\rm BF}^{\tau}_{\rm ac, \Bf}}, \ \ \ \lambda \in \text{Frac}(\mathcal{H}_{{\rm ac}, \Bf}). \]
    Applying $\mathscr{L}^{\tau}_{\omega_\Bf} \circ {\rm res}_\p$ on both sides and solving for $\lambda$ then yields the desired result by Corollary \ref{corollary_reciprocity_for_anticyclotomic_BF} and Theorem \ref{reciprocity_Z_inf}.
\end{proof}
\begin{remark}
\normalfont
    If $\f$ is a slope-zero family, we may replace \eqref{item_BI} with the condition that the residual representation of $\f$ is absolutely irreducible. In this case, Proposition \ref{proposition_Selmer_rank_1} is an immediate consequence of \cite[Theorem 12.9.11]{nekovar06} (see also \cite{howard06}, Remark 3.3.2).   \hfill$\triangle$
\end{remark}
\subsubsection{The $p$-adic Gross--Zagier formula}
\begin{theorem}[Universal Gross--Zagier Formula] 
\label{Theorem_Big_GZ}
Suppose \eqref{item_Disc}, \eqref{item_Heeg}, and \eqref{item_BI} hold\footnote{See Footnote~\eqref{footnote_BI} for a comment on the condition \eqref{item_BI}.}, and assume that $p\nmid 6h_K$, then we have the equality
    \[(-1)^{\frac{\Bk}{2}+\Bj}  \cdot \langle \mathfrak{Z}^\tau_\infty, \mathfrak{Z}^{\tau^{-1}}_\infty \rangle = \frac{\lambda_{\Bg_{\tau^{-1}}}}{\lambda_\Bf} \cdot \mathscr{R}_{\rm ac}(\Bg_{\tau^{-1}}) \cdot\frac{\mathfrak{L}_\fp^{\tau} (\Bf) \cdot \mathfrak{L}^{\tau^{-1}}_\fp(\Bf)}{\mathfrak{L}^{\g_{\tau^{-1}} }_{{\rm ac},0}(\Bf,\tau^{-1})} \cdot \mathfrak{L}^{\Bf}_{\rm ac, 1}(\Bf,\tau)\]
\end{theorem}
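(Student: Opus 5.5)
We plan to combine Proposition~\ref{prop_BF=Z} (applied to both $\tau$ and $\tau^{-1}$) with the Rubin-style formula of Proposition~\ref{Rubin_formula}. Since the height pairing $\langle\,,\,\rangle$ is $\mathcal{H}_{\rm ac,\f}$-bilinear (with the appropriate twist by the involution relating $\tau$ and $\tau^{-1}$ components), we first write
\[
\langle \mathfrak{Z}^\tau_\infty, \mathfrak{Z}^{\tau^{-1}}_\infty\rangle = \lambda_\tau\,\lambda_{\tau^{-1}}\,\bigl\langle {}_c{\rm BF}^{\tau}_{\rm ac,\f},\, {}_c{\rm BF}^{\tau^{-1}}_{\rm ac,\f}\bigr\rangle,
\]
where $\lambda_{\tau^{\pm1}}\in\Frac(\mathcal{H}_{\rm ac,\f})$ are the explicit scalars from Proposition~\ref{prop_BF=Z}. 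We will only substitute one of these scalars fully; the other will be used to cancel.

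Next we apply Proposition~\ref{Rubin_formula}, which converts the height of the Beilinson--Flach pair into $-\langle \mathfrak{d}^{(\fp)}_{\rm cyc}{\rm BF}^\tau_{\rm ac,\f},\, {}_c{\rm BF}^{\tau^{-1},+}_{\rm ac,\fp}\rangle^{\rm ac}_{\rm Tate}$. By Lemma~\ref{lemma_comparing_pairings}, this Tate pairing equals $-(-1)^{\frac{\Bk}{2}+\Bj}$ times $\langle \mathfrak{Log}^{\rm ac}_{-+}(\mathfrak{d}^{(\fp)}_{\rm cyc}{\rm BF}^\tau_{\rm ac,\f}),\, \mathfrak{Log}^{\rm ac}_{+-}({}_c{\rm BF}^{\tau^{-1},+}_{\rm ac,\fp})\rangle_\DD$. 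Since $\DD^\pm(V_\f^\dagger)$ are rank one and $\langle\eta_\f,\omega_\f\rangle_\DD$ is normalised to $1$ (the standard Eichler--Shimura normalisation), this $\DD$-pairing factors as the product $\langle\mathfrak{Log}^{\rm ac}_{-+}(\cdot),\eta_\f\rangle_\DD\cdot\langle\mathfrak{Log}^{\rm ac}_{+-}(\cdot),\omega_\f\rangle_\DD$. The second factor is $\mathscr{L}^{\tau^c}_{\omega_\f}({}_c{\rm BF}^{\tau^{-1},+}_{\rm ac,\fp})$, computed by Corollary~\ref{corollary_reciprocity_for_anticyclotomic_BF}(ii) as $\frac{\mathscr{C}_{\rm ac}(\g_{\tau^{-1}})}{\mathscr{R}_{\rm ac}(\g_{\tau^{-1}})}\mathfrak{L}^{\g_{\tau^{-1}}}_{{\rm ac},0}(\f,\tau^{-1})$. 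For the first factor, we differentiate Theorem~\ref{Theorem_Reciprocity_for_BF}(i) in the cyclotomic variable. Using Lemma~\ref{lemma_2026_05_28_0843}, the injectivity of $\mathfrak{Log}^\Gamma_{-+}$, and the vanishing $\mathfrak{L}^\f_{{\rm ac},0}=0$ (proof of Lemma~\ref{lemma_BF_vanish}), division by $(\gamma_{\rm cyc}-1)/\log\chi_{\rm cyc}(\gamma_{\rm cyc})$ in the expansion \eqref{eq_expansion_of_l_function} gives $\mathscr{L}^\tau_{\eta_\f}(\mathfrak{d}^{(\fp)}_{\rm cyc}{\rm BF}^\tau_{\rm ac,\f})=\mathscr{C}_{\rm ac}(\f)\,\mathfrak{L}^\f_{{\rm ac},1}(\f,\tau)$. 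Here the differentiation of the $c$-factor only contributes terms multiplied by $\mathfrak{L}^\f_{{\rm ac},0}=0$.

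Assembling these pieces, the factor $\mathfrak{L}^{\g_{\tau^{-1}}}_{{\rm ac},0}(\f,\tau^{-1})$ and the factors $\mathscr{C}_{\rm ac}(\g_{\tau^{-1}})/\mathscr{R}_{\rm ac}$ coming from Corollary~\ref{corollary_reciprocity_for_anticyclotomic_BF}(ii) cancel against one copy of $\lambda_{\tau^{\pm1}}$. The $c$-smoothing factors $(c^2-\cdots)$ inside $\mathscr{C}_{\rm ac}(\f)$ and $\mathscr{C}_{\rm ac}(\g_{\tau^{-1}})$ also cancel. What remains is $\lambda_\f^{-1}$ from $\mathscr{C}_{\rm ac}(\f)$ against $\lambda_{\g_{\tau^{-1}}}^{-1}$ from the remaining $\mathscr{C}_{\rm ac}(\g_{\tau^{-1}})$, together with one $\mathscr{R}_{\rm ac}$, the two BDP functions $\mathfrak{L}^{\tau}_\fp\mathfrak{L}^{\tau^{-1}}_\fp$, and one remaining $\mathfrak{L}^{\g_{\tau^{-1}}}_{{\rm ac},0}$ in the denominator. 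This yields the stated shape.

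The main obstacle is bookkeeping rather than conceptual. We must track the signs $(-1)^{\frac{\Bk}{2}+\Bj}$ arising from Theorem~\ref{reciprocity_Z_inf}, Lemma~\ref{lemma_comparing_pairings}, the two $\mathscr{C}_{\rm ac}$'s and the Rubin minus signs; these should combine to a single $(-1)^{\frac{\Bk}{2}+\Bj}$ on the left. We must also verify the behaviour of $\Bj$ under $\tau\leftrightarrow\tau^{-1}$, namely the involution $\iota$ on $\mathcal{H}(\Gamma^\circ_{\rm ac})$, which sends $\Bj\mapsto-\Bj$. Concretely, we would express Proposition~\ref{prop_BF=Z} for $\tau^{-1}$ after applying $\iota$, so that it is compatible with the sesquilinearity of the height pairing, and check the sign and parity consistency pointwise at classical specialisations.
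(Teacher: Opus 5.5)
Your proposal follows the paper's proof essentially step for step: Proposition~\ref{prop_BF=Z} for $\tau^{\pm1}$, Proposition~\ref{Rubin_formula}, Lemma~\ref{lemma_comparing_pairings} with the $\DD$-pairing split through $\eta_\Bf,\omega_\Bf$, Corollary~\ref{corollary_reciprocity_for_anticyclotomic_BF}(ii), and Theorem~\ref{Theorem_Reciprocity_for_BF}(i) divided by the cyclotomic variable. Your justifications of the factorisation and of the division by the cyclotomic variable spell out what the paper uses implicitly.

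There is one indexing slip. Since ${}_c{\rm BF}^{\tau^{-1},+}_{{\rm ac},\fp}$ is built from ${}_c\mathbf{BF}^\dagger(\Bf\otimes\g_\tau)$, Corollary~\ref{corollary_reciprocity_for_anticyclotomic_BF}(ii) gives $\frac{\mathscr{C}_{\rm ac}(\g_\tau)}{\mathscr{R}_{\rm ac}(\g_\tau)}\mathfrak{L}^{\g_\tau}_{{\rm ac},0}(\Bf,\tau)$, not the $\g_{\tau^{-1}}$-version. It is this term that cancels the $\g_\tau$-factors in the scalar of Proposition~\ref{prop_BF=Z} for $\tau^{-1}$, leaving the $\g_{\tau^{-1}}$-factors you correctly end with. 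With the factor as you wrote it, the $\g_\tau$-factors would survive instead.
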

\begin{proof}
    Let us put 
    $$L=\frac{\mathscr{R_{\rm ac}}(\Bg_{\tau}) \cdot \mathscr{R_{\rm ac}}(\Bg_{\tau^{-1}})}{\mathscr{C}_{\rm ac}(\g_{\tau})\cdot \mathscr{C}_{\rm ac}(\g_{\tau^{-1}})}\cdot \frac{\mathfrak{L}_\fp^{\tau}(\Bf) \cdot \mathfrak{L}_\fp^{\tau^{-1}}(\Bf)}{\mathfrak{L}^{\g_\tau}_{{\rm ac},0}(\Bf,\tau)\cdot \mathfrak{L}^{\g_{\tau^{-1}}}_{{\rm ac},0}(\Bf,\tau^{-1})}\,.$$
    Combining Propositions \ref{Rubin_formula} and \ref{prop_BF=Z}, we infer that \begin{align}
        (-1)^{\frac{\Bk}{2}+\Bj}\cdot\langle \mathfrak{Z}^\tau_\infty, \mathfrak{Z}^{\tau^{-1}}_\infty \rangle &=  -(-1)^{\frac{\Bk}{2}+\Bj} \cdot L\cdot\langle \mathfrak{d}^{(\fp)}_{\rm{cyc}}{\rm BF}^{\tau}_{\rm ac, \Bf} , {_c\rm{BF}^{\tau^{-1}, +}_{\ac, \p}}\rangle_{\rm Tate}^{\rm ac} \notag \\
        &=  L \cdot \mathscr{L}^{\tau^c}_{\omega_\Bf}(_c{\rm BF}^{\tau^{-1}, +}_{\rm ac,\fp})\cdot \langle \mathfrak{Log}^{\rm ac}_{-+} (\mathfrak{d}^{(\fp)}_{\rm{cyc}}{\rm BF}^{\tau}_{\rm ac, \Bf}), \eta_\Bf \rangle_\DD \notag \\
        &= \frac{\mathscr{R}_{\rm ac}(\Bg_{\tau^{-1}})}{\mathscr{C}_{\rm ac}(\Bg_{\tau^{-1}})} \cdot \frac{\mathfrak{L}_\fp^{\tau}(\Bf)\cdot \mathfrak{L}_\fp^{\tau^{-1}}(\Bf)}{\mathfrak{L}^{\g_\tau^{-1}}_{{\rm ac},0}(\Bf, \tau^{-1})} \cdot \mathscr{L}^{\tau}_{\eta_{\Bf}}(\mathfrak{d}^{(\fp)}_{\rm{cyc}}{\rm BF}^{\tau}_{\rm ac, \Bf})  \notag \\
        &= \frac{\lambda_{\Bg_{\tau^{-1}}}}{\lambda_\Bf} \cdot \mathscr{R}_{\rm ac}(\Bg_{\tau^{-1}}) \cdot \frac{\mathfrak{L}_\fp^{\tau}(\Bf)\cdot \mathfrak{L}_\fp^{\tau^{-1}}(\Bf)}{\mathfrak{L}^{\g_\tau^{-1}}_{{\rm ac},0}(\Bf, \tau^{-1})} \cdot \mathfrak{L}^{\Bf}_{\rm ac, 1}(\Bf, \tau) . \notag
    \end{align}
    as required. Here, the second equality is Lemma \ref{lemma_comparing_pairings}, the third equality is Corollary \ref{corollary_reciprocity_for_anticyclotomic_BF}(ii), and the fourth equality follows from Theorem \ref{Theorem_Reciprocity_for_BF}(i).
\end{proof}
\subsubsection{} 
\label{subsubsec_623_2026_02_06}
We now calculate explicitly the specialisations of the formula proved in Theorem~\ref{Theorem_Big_GZ}. From the sequence of morphisms 
$$ \varprojlim_n \mathscr{O}_{\g_{\tau, n}} \simeq \mathcal{H}_\tau(\Gamma^\circ_\p) \lra \mathcal{H}_\tau(\Gamma^\circ_\p) \hatotimes \mathcal{H}(\Gamma^\circ_\cyc)/(\gamma^+-1) \simeq \mathcal{H}_\tau(\Gamma^\circ_\ac)$$
(cf. \eqref{eq_ses_gamma_gamma_gammaac_first}, \eqref{eq_ses_gamma_gamma_gammaac} and the surrounding discussion for details on the last isomorphism; we also recall the convention for $\tau$ adopted there) we see that the map $\chi_\tau:\mathcal{H}_\tau (\Gamma^\circ_\ac) \to E$ induced by an anticyclotomic character (of $p$-power conductor, satisfying $\chi_\tau|_{\Delta_\ac} = \tau$) can be pulled back to a specialisation of our CM family. Henceforth, all specialisations of $\g_\tau$ we shall work with are assumed to be of this type.

We set $\tau = \mathds{1}^{(p)} \in \widehat{\Delta}_\ac$. For $\Bh \in \{\f, \g\}$, we let $\kappa_h$ denote a specialisation of $\Bh$ (so $\kappa_{g}$ is induced by some anticyclotomic character $\chi =\chi_{\mathds{1}^{(p)}} $ as above). We write 
$$\dfrac{d^i}{ds^i}L_p^{(\f)}(\f_{\kappa_f},\chi,s)|_{s=\frac{k}{2}+1}\,, \qquad L_\p^{\rm BDP}(\f_{\kappa_f} \otimes \,  \chi, k/2+1)$$ 
for the respective specialisations of $\mathfrak{L}^\f_{\ac, i}(\f,\mathds{1}^{(p)})$ and $\mathfrak{L}^{\mathds{1}^{(p)}}_\p(\f)$ at $(\kappa_f, \kappa_g)$.

In the notation of \cite[\S 4.5.2]{BDV}, we recall the function $\mathscr{C}(l) = \mathscr{C}_2^{-1}(l) \cdot \mathscr{C}_3^{-1}(l)$ for $l \in U_\g$. From the definitions, we may write $\mathscr{C}(1):= \mathscr{C}_2^{-1} \cdot \mathscr{C}_3^{-1} $ for some algebraic constants $\mathscr{C}_2, \mathscr{C}_3$, and moreover $\mathscr{C}_3 \in K^\times$ depends only on the field $K$. On the other hand, since $f$ has trivial nebentype, we have the equality $\mathscr{C}_2 = i \cdot |D_K|^{\frac{1}{2}}$, under our running hypotheses \eqref{item_Disc}, \eqref{item_Heeg}. Thus $\mathscr{C}_2$ also depends only on $K$ in this case. We then define 
$$\mathscr{B} := \frac{2p}{p-1} \cdot \mathscr{C}^{-1}_2 \cdot \mathscr{C}_3^{-1}.$$
Furthermore, let $u_\p \in \mathcal{O}_K[\frac{1}{p}]^\times $ denote a generator for the ideal $\p^{h_k}$ and $\lambda_{g_{\rm Eis}} $ the specialisation of $\lambda_\g$ corresponding to $g_{\rm Eis}$. We define
$$\mathscr{A}:= \log_p(u_\p) \cdot \mathscr{R} \cdot \lambda_{\rm Eis}\in \overline{\QQ}^\times\,, $$
where the algebraicity is explained in \cite[Lemma 4.6]{BDV} (cf. Remark \ref{remark_R_ac_vs_L(g)}).
\begin{corollary}[$p$-adic Gross--Zagier theorem up to non-zero algebraic factors]
    \label{cor_2026_01_20_2022}
    Let $f_\alpha$ be as in the beginning of \S\ref{sec_main_GZ}. Under the hypotheses of Theorem \ref{Theorem_Big_GZ}, the following equality holds: 
    \begin{equation}
   \label{eq_first_GZ_Equality}
        \dfrac{h_f( z_{f}, z_{f})}{(4|D_K|)^{\frac{k}{2}}} =(\star) \cdot (-1)^{\frac{k}{2}} \cdot \frac{\mathscr{A}}{\mathscr{B}}\cdot \frac{d}{ds}L_p^{(\f)}(f_{\alpha},\mathds{1}^{(p)} ,s)|_{s=\frac{k}{2}+1}\, ,\quad  (\star) = \begin{cases}
    {\alpha \cdot \mathcal{E}(f_\alpha)\cdot \mathcal{E}^*(f_\alpha)}{\left(1-\frac{p^{\frac{k}{2}}}{\alpha}\right)^{-4}} &\hbox{ if } p\nmid N\,,\\
 \dfrac{\alpha}{4}  & \hbox{ if } p\mid \mid N \,.
    \end{cases}
    \end{equation}
\end{corollary}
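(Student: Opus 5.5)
The plan is to specialise the identity of Theorem~\ref{Theorem_Big_GZ} with $\tau=\mathds{1}^{(p)}$ (so that $\tau=\tau^{-1}$) at the point $(\kappa_f,\kappa'_\circ)$. Here $\kappa_f$ is the specialisation of $\f$ giving $f_\alpha$, and $\kappa'_\circ$ is the augmentation map corresponding to the trivial anticyclotomic character, i.e.\ to the weight-one Eisenstein specialisation $g_{\rm Eis}$ of $\g$. At this point the universal anticyclotomic variable $\Bj$ specialises to $0$ and $\Bk/2$ specialises to $k/2$, so the sign $(-1)^{\frac{\Bk}{2}+\Bj}$ becomes $(-1)^{k/2}$.

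\textbf{Left-hand side.} By construction, $\mathfrak{Z}_\infty$ specialises to $\mathscr{Z}_\f$ under augmentation, and the height pairing is compatible with specialisation. Hence the left-hand side becomes $(-1)^{k/2}\,\kappa_f\circ\langle\mathscr{Z}_\f,\mathscr{Z}_\f\rangle$. Corollary~\ref{cor_thm_bigheegnermain}(i) evaluates this when $p\nmid N$, and Corollary~\ref{cor_thm_bigheegnermain}(ii) when $p\mid\mid N$. This produces $\frac{h_f(z_f,z_f)}{(4|D_K|)^{k/2}}$ multiplied by exactly the inverse of the factor $(\star)$, up to the Atkin--Lehner factor $\lambda_{N_\f}(f)^{-1}$ in case (i).

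\textbf{Right-hand side.} The factor $\mathfrak{L}^{\f}_{{\rm ac},1}(\f,\mathds{1})$ specialises, by definition, to $\frac{d}{ds}L_p^{(\f)}(f_\alpha,\mathds{1}^{(p)},s)|_{s=k/2+1}$. The factor $\mathscr{R}_{\rm ac}(\g)$ specialises to $\mathscr{R}=1/\mathcal{L}(g)$ by Remark~\ref{remark_R_ac_vs_L(g)}, and $\lambda_\g$ specialises to $\lambda_{\rm Eis}$. The ratio $\mathfrak{L}_\fp(\f)^2/\mathfrak{L}^{\g}_{{\rm ac},0}(\f,\mathds{1})$ I would handle via the factorisation of the $\g$-dominant Rankin--Selberg $p$-adic $L$-function at the Eisenstein point from \cite[\S4.5--4.6]{BDV}. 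In the CM case, $L^{(\g)}_p$ restricted to the anticyclotomic line factors as the product of two BDP $p$-adic $L$-functions (for $\fp$ and $\fp^c$, the latter identified with the former via the functional equation). The product is weighted by the constant $\mathscr{C}(1)=\mathscr{C}_2^{-1}\mathscr{C}_3^{-1}$ and by a Katz/Kubota--Leopoldt-type factor, which produces $\log_p(u_\fp)$ and $\frac{2p}{p-1}$. At the trivial character this should give
\[\mathfrak{L}^{\g}_{{\rm ac},0}(f_\alpha,\mathds{1}) = \frac{p-1}{2p}\,\mathscr{C}_2\mathscr{C}_3\,\log_p(u_\fp)^{-1}\, L^{\rm BDP}_\fp(f_\alpha)^2\]
(or its analogue with the inverse convention). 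Then $\mathscr{R}\,\lambda_{\rm Eis}\,\mathfrak{L}_\fp^2/\mathfrak{L}^{\g}_{{\rm ac},0} = \mathscr{A}/\mathscr{B}$. Dividing through by $\lambda_\f(\kappa_f)$ cancels the Atkin--Lehner factor found on the left, and rearranging yields \eqref{eq_first_GZ_Equality}.

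\textbf{Main obstacle.} I expect the main obstacle to be this factorisation step. First, the BDP values $L_\fp^{\rm BDP}(f_\alpha)$ must be nonzero at this point (otherwise the cancellation is illegitimate). I would secure this from the BDP formula together with nontriviality of $z_f$; if $z_f$ is torsion, both sides vanish and the statement is trivial. Second, the precise normalisation of constants (the exact form of $\mathscr{C}_2,\mathscr{C}_3$, the factor $2p/(p-1)$, and the $p$-adic regulator $\log_p(u_\fp)$ coming from the Eisenstein point, as in \cite[Lemma~4.6]{BDV}) needs care. A secondary point is checking that specialising meromorphic identities in $\Frac(\mathcal{H}_{{\rm ac},\f})$ at this point is legitimate, i.e.\ that no denominator vanishes identically there; I would justify this by density and continuity.
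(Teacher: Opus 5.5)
Your route is the paper's route: specialise Theorem~\ref{Theorem_Big_GZ} at $\tau=\mathds{1}^{(p)}$ and at $(\kappa,\text{augmentation})$, evaluate the height side with Corollary~\ref{cor_thm_bigheegnermain}, and cancel the tame Atkin--Lehner factor against $\lambda_\f$. You then handle $\mathscr{R}\,\lambda_{\rm Eis}\cdot(L^{\rm BDP}_\fp)^2/L_p^{(g_{\rm Eis})}$ through the CM factorisation of \cite{BDV}; the paper simply quotes this as \cite[Lemma~4.4]{BDV}.

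The one step that is wrong as written is your displayed formula for that factorisation. Since $\frac{p-1}{2p}\mathscr{C}_2\mathscr{C}_3=\mathscr{B}^{-1}$, your formula gives $(L^{\rm BDP}_\fp)^2/\mathfrak{L}^{\g}_{{\rm ac},0}=\mathscr{B}\log_p(u_\fp)$. Hence $\mathscr{R}\,\lambda_{\rm Eis}\,(L^{\rm BDP}_\fp)^2/\mathfrak{L}^{\g}_{{\rm ac},0}=\mathscr{A}\mathscr{B}$, not $\mathscr{A}/\mathscr{B}$. What is needed, and what the paper uses, is
\[
\frac{L_\fp^{\rm BDP}(f_\alpha\otimes\mathds{1}^{(p)},k/2+1)^2}{L_p^{(g_{\rm Eis})}(f_\alpha,\mathds{1}^{(p)},k/2+1)}=\frac{\log_p(u_\fp)}{\mathscr{B}}\,.
\]
Equivalently, the Rankin--Selberg value equals $\frac{2p}{p-1}\mathscr{C}_2^{-1}\mathscr{C}_3^{-1}\log_p(u_\fp)^{-1}$ times the squared BDP value. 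This matches the weight $\mathscr{C}(1)=\mathscr{C}_2^{-1}\mathscr{C}_3^{-1}$ you describe in words. With this correction, rearranging gives \eqref{eq_first_GZ_Equality}.

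Your fallback for the nonvanishing worry does not work. If $z_f$ is torsion, the left-hand side vanishes, but nothing in your argument forces $\frac{d}{ds}L_p^{(\f)}(f_\alpha,\mathds{1}^{(p)},s)|_{s=k/2+1}$ to vanish; that is exactly what the formula is supposed to deliver. The fallback is also unnecessary, because the paper never divides by the individual BDP value. It takes the value of the whole quotient from \cite[Lemma~4.4]{BDV}. That value, $\log_p(u_\fp)/\mathscr{B}$, is nonzero, since $\fp\neq\fp^c$ forces $\log_p(u_\fp)\neq 0$.
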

\begin{proof}
     We present the argument in the case $p\nmid N$; the case $p\mid \mid N$ is entirely analogous. Let $\kappa$ denote the specialisation of $\f$ such that $\f_\kappa = f_\alpha$ and let $\varepsilon: \mathcal{H}(\Gamma^\circ_\ac) \to E$ denote the augmentation map. Setting $\tau =\mathds{1}^{(p)}$ in Theorem \ref{Theorem_Big_GZ} and specialising at $(\kappa, \varepsilon)$, and using Corollary~\ref{cor_thm_bigheegnermain}, we deduce that
    \begin{align}
    \begin{aligned}
    \label{eqn_2026_02_04_1211}
        \dfrac{\left(1-\frac{p^{\frac{k}{2}}}{\alpha}\right)^{4}}{(4|D_K|)^{\frac{k}{2}}}\cdot \dfrac{(-1)^{\frac{k}{2}}h_{f}(z_{f},z_{f})}{\alpha \cdot (-1)^{k/2} \cdot \mathcal{E}(f_\alpha)\cdot \mathcal{E}^*(f_\alpha)}  &= (-1)^{k/2}\cdot {\lambda_{g_{\rm Eis}}}\cdot\mathscr{R} \cdot \frac{L_\fp^{\rm BDP}(f_\alpha \otimes \mathds{1}^{(p)}, k/2+1)^2}{L _p^{(g_{\rm{Eis}})}(f_\alpha,\mathds{1}^{(p)},k/2+1)
        }\\
        &\hspace{3cm} \times \frac{d}{ds}L^{(f_\alpha)}_p(f_{\alpha},\mathds{1}^{(p)},s )|_{s=\frac{k}{2}+1}\,.
    \end{aligned}
    \end{align}
    According to \cite[Lemma 4.4]{BDV}, we have the equality $$\frac{L_\fp^{\rm BDP}(f_\alpha \otimes \mathds{1}^{(p)}, k/2+1)^2}{L _p^{(g_{\rm{Eis}})}(f_\alpha,\mathds{1}^{(p)},k/2+1)} = \frac{\log_p(u_\fp)}{\mathscr{B}}.$$ The result now follows after rearranging \eqref{eqn_2026_02_04_1211}.
\end{proof}

\subsubsection{} 
Corollary~\ref{cor_2026_01_20_2022} is readily sufficient to, among other things, conclude the proof of Perrin-Riou's conjecture (cf. \cite{BPSI}). We can also improve it to the following ``usual form'' of the $p$-adic Gross--Zagier formula in terms of Heegner cycles.
On setting 
\begin{equation}
    \label{eqn_2026_02_04_1051}
    L_p(f_{\alpha/K},s) := ( \clubsuit)\cdot L_p^{(\f)}(f_\alpha, \mathds{1}^{(p)},s)\,, 
\end{equation}
where
$$( \clubsuit) = \begin{cases}
        {\alpha \cdot \mathcal{E}(f_\alpha)\cdot \mathcal{E}^*(f_\alpha)} & \rm{if }\ p\nmid N, \\
        {\alpha}  &\rm{if }\ p\mid  \mid N,
    \end{cases}$$
    we obtain the following, more familiar version of  Corollary \ref{cor_2026_01_20_2022}.
\begin{theorem}
    \label{thm_2026_01_20_2021} Suppose that \eqref{itemH} holds and $p\nmid 6h_K$. We have
    \begin{equation}
    \label{eqn_2026_02_05_1024}
        \frac{\mathscr A}{\mathscr{B}}\cdot \frac{d}{ds}L_p(f_{\alpha/K},s)|_{s=\frac{k}{2}+1}= (-1)^{\frac{k}{2}}\times 
    \begin{cases}
    \left(1-\frac{p^{\frac{k}{2}}}{\alpha}\right)^{4}\cdot\dfrac{h_f( z_{f}, z_{f})}{(4|D_K|)^{\frac{k}{2}}} &\hbox{ if } p\nmid N\,,\\
    4\cdot\dfrac{h_f( z_{f}, z_{f})}{(4|D_K|)^{\frac{k}{2}}} & \hbox{ if } p\mid \mid N \,.
    \end{cases}
    \end{equation}
   Moreover, if \eqref{itemNV} holds\footnote{\label{footnote_Thm_Main} See Remark~\ref{remark_2026_02_06_1016} concerning \eqref{itemNV}, and Footnote~\eqref{footnote_BI} for a comment on the condition \eqref{item_BI}. In particular, if $f_\alpha$ is slope-zero, then we may relax \eqref{item_BI} to the requirement that the residual representation of $f_\alpha$ be absolutely irreducible.} as well, then $\mathscr{A/B}=1$.
\end{theorem}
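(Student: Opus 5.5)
The first claim, the identity \eqref{eqn_2026_02_05_1024}, should be a direct rearrangement of Corollary~\ref{cor_2026_01_20_2022}. By definition \eqref{eqn_2026_02_04_1051}, $L_p(f_{\alpha/K},s)=(\clubsuit)\cdot L_p^{(\f)}(f_\alpha,\mathds{1}^{(p)},s)$, and $(\clubsuit)$ does not depend on $s$. So the derivative at $s=\frac{k}{2}+1$ is $(\clubsuit)$ times the derivative appearing in \eqref{eq_first_GZ_Equality}. I will multiply \eqref{eq_first_GZ_Equality} through by $(-1)^{\frac{k}{2}}(\star)^{-1}$. When $p\nmid N$ we have $(\star)^{-1}=(1-p^{k/2}/\alpha)^4/(\clubsuit)$, and when $p\mid\mid N$ we have $(\star)^{-1}=4/\alpha=4/(\clubsuit)$. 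In both cases this gives exactly the right-hand side of \eqref{eqn_2026_02_05_1024}. The sign $(-1)^{k/2}$ squares to $1$ because $k$ is even. The only thing to check is that the hypotheses of Theorem~\ref{Theorem_Big_GZ} follow from \eqref{itemH} together with $p\nmid 6h_K$, and this holds by definition.

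The second claim, that $\mathscr A/\mathscr B=1$ under \eqref{itemNV}, needs more work. The key point is that $\mathscr A$ and $\mathscr B$ depend only on $K$ and $p$, not on $f$. For $\mathscr B$ this is immediate from its definition, since $\mathscr C_2=i|D_K|^{1/2}$ and $\mathscr C_3$ depend only on $K$. For $\mathscr A=\log_p(u_\p)\cdot\mathscr R\cdot\lambda_{\rm Eis}$, each factor is attached to the CM Eisenstein family $\g$ or to $K$ (cf.\ Remark~\ref{remark_R_ac_vs_L(g)}). Hence the same ratio $\mathscr A/\mathscr B$ appears in \eqref{eqn_2026_02_05_1024} for \emph{every} form $h$ satisfying the hypotheses with the same $p$ and $K$.

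Now take the form $h$ (or the elliptic curve $E$) supplied by \eqref{itemNV}. In case \eqref{item_NV1} it is $p$-ordinary of weight $m\ge2$, and for it the classical $p$-adic Gross--Zagier formula of Perrin-Riou and Nekov\'a\v{r} holds with the same Euler factors and normalisations. In cases \eqref{item_NV2} and \eqref{item_NV3} it is supersingular of weight $2$, and Kobayashi's formula applies. Apply \eqref{eqn_2026_02_05_1024} to $h$ and compare with the known formula. Both sides are the same derivative times the same explicit factor times $h_h(z_h,z_h)$. Since the derivative is nonzero, in \eqref{item_NV3} via Gross--Zagier and Kolyvagin giving a nonzero Heegner point and then the known $p$-adic formula, dividing yields $\mathscr A/\mathscr B=1$.

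I expect the main obstacle to be matching normalisations precisely. The $p$-adic $L$-function $L_p(h_{/K},s)$ and the heights used in the literature must agree with ours: the Euler factors, the factors $(4|D_K|)^{k/2}$, the signs, and the fact that our heights coincide with Schneider/Nekov\'a\v{r} heights by Remark~\ref{remark_2026_07_01_1017}. Any discrepancy must be an explicit constant depending only on $K$, which would then be absorbed. In case \eqref{item_NV3} one must also check that ${\rm ord}_{s=1}L(E,s)=1$ yields the needed nonvanishing of $L_p'(E_{/K},1)$. I would try to obtain this via the nonvanishing of the $p$-adic height of the Heegner point, but that nonvanishing is not automatic. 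If it fails, I would restrict to \eqref{item_NV2}-type inputs.
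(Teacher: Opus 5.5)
Your argument is essentially the paper's proof. The identity \eqref{eqn_2026_02_05_1024} is a rearrangement of Corollary~\ref{cor_2026_01_20_2022}, using $(\clubsuit)(\star)^{-1}=(1-p^{k/2}/\alpha)^4$, resp.\ $4$. The equality $\mathscr A/\mathscr B=1$ then follows because this ratio does not depend on $f_\alpha$: you can specialise to the auxiliary form supplied by \eqref{itemNV} and compare with the formulae of Perrin-Riou, Nekov\'a\v{r} and Kobayashi.

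The one loose end is your treatment of \eqref{item_NV3}.

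\begin{itemize}
\item The fallback of restricting to \eqref{item_NV2}-type inputs is not available. \eqref{itemNV} is a disjunction, so \eqref{item_NV3} may be all you have.
\item Your appeal to Gross--Zagier is not quite right as stated. A non-torsion Heegner point over $K$ requires $L'(E_{/K},1)=L'(E,1)\,L(E^K,1)\neq 0$. So ${\rm ord}_{s=1}L(E,s)=1$ must be read as analytic rank one for $E_{/K}$, or supplemented by $L(E^K,1)\neq0$. Otherwise $z_E$ is torsion and the comparison reads $0=0$.
\end{itemize}

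Granting a non-torsion Heegner point $P$, the nonvanishing of the $p$-adic height you were unsure about does hold in this supersingular setting, by the argument underlying Kobayashi's work.

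\begin{itemize}
\item Since $p>3$, we have $a_p(E)=0$. The two roots $\pm\sqrt{-p}$ are then conjugate over $\Qp$, so the two $p$-adic heights and $p$-adic $L$-functions attached to them are Galois conjugate.
\item These two heights differ only in the choice of splitting of the Hodge filtration. Hence their difference is a nonzero multiple of the square of the $p$-adic logarithm of $P$, which is nonzero.
\item So neither height vanishes, and Kobayashi's formula gives $L_p'(E_{/K},1)\neq 0$.
\end{itemize}

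The paper's proof uses this implicitly. Since $\mathscr A/\mathscr B$ is independent of the choice of root, either root can be used for the comparison.
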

\begin{proof}
    In view of Corollary~\ref{cor_2026_01_20_2022}, it suffices to prove that ${\mathscr{A}}/{\mathscr{B}}=1$ under \eqref{itemH} and \eqref{itemNV}. Note also that ${\mathscr{A}}/{\mathscr{B}}$ does not depend on $f_\alpha$. We will prove that it equals to $1$ by making a convenient choice of $f_\alpha$. Let $h$ be a $p$-stabilised eigenform satisfying \eqref{itemNV} (in the situation of \eqref{item_NV1}, we let $h=f_{E,\alpha}$ be the $p$-stabilisation of the newform $f_E$ associated to $E$). Comparing the $p$-adic Gross--Zagier formulae\footnote{We note that the Heegner class considered in \cite{perrinriou87} is equal to $h_K \cdot z_{h^\circ}$.} proved in \cite{perrinriou87, nekovarGZ, kobayashi13, kobayashi2014_GZ} for this particular choice of $h$ to \eqref{eqn_2026_02_05_1024}, we conclude the proof of our theorem. 
    \end{proof}
\label{appendix_padicL}
\bibliographystyle{amsalpha}
\bibliography{references}
\end{document}